\def\BibTeX{{\rm B\kern-.05em{\sc i\kern-.025em b}\kern-.08em
    T\kern-.1667em\lower.7ex\hbox{E}\kern-.125emX}}
\begin{document}
\newtheorem{theorem}{Theorem}
\newtheorem{lemma}{Lemma}
\newtheorem{definition}{Definition}
\newtheorem{remark}{Remark}
\newtheorem{corollary}{Corollary}
\newtheorem{proposition}{Proposition}

\title{A Class of Convex Optimization-Based Recursive Algorithms for Identification of Stochastic Systems}
\author{Mingxia Ding, Wenxiao Zhao, and Tianshi Chen
\thanks{This work was supported by the National Key Research and Development Program of
China (2018YFA0703800), by the Chinese Academy of Sciences (CAS) Project for Young Scientists
in Basic Research under grant YSBR-008, and by the Strategic Priority Research Program of Chinese
Academy of Sciences under grant XDA27000000.}
\thanks{Mingxia Ding and Wenxiao Zhao are with the Key Laboratory of Systems and Control,
Academy of Mathematics and Systems Science, Chinese Academy of
Sciences, Beijing 100190, China, and also with the School of Mathematical Sciences, University of Chinese Academy of Sciences, Beijing
100049, China (e-mail: dingmingxia@amss.ac.cn; wxzhao@amss.ac.cn). }
\thanks{Tianshi Chen is with the School of Science and Engineering and Shenzhen Research Institute of Big Data, The Chinese University of Hong
Kong, Shenzhen, 518172, Shenzhen, China (e-mail: tschen@cuhk.edu.cn).}}
\maketitle
\begin{abstract}
Focusing on identification, this paper develops a class of convex optimization-based criteria and correspondingly the recursive algorithms to estimate the parameter vector $\theta^{*}$ of a stochastic dynamic system. Not only do the criteria include the classical least-squares estimator but also the $L_l=|\cdot|^l, l\geq 1$, the Huber, the Log-cosh, and the Quantile costs as special cases. First, we prove that the minimizers of the convex optimization-based criteria converge to $\theta^{*}$ with probability one. Second, the recursive algorithms are proposed to find the estimates, which minimize the convex optimization-based criteria, and it is shown that these estimates also converge to the true parameter vector with probability one. Numerical examples are given, justifying the performance of the proposed algorithms including the strong consistency of the estimates, the robustness against outliers in the observations, and higher efficiency in online computation compared with the kernel-based regularization method due to the recursive nature.
\end{abstract}

\begin{IEEEkeywords}
System identification, convex optimization, stochastic approximation algorithm, strong consistency.
\end{IEEEkeywords}

\section{Introduction}
Aiming at building mathematical models from observation data for complex industrial processes and natural phenomena, {\color{black}{system}} identification has received much attention from diverse research areas. Among the wide classes of mathematical models, the following stochastic system plays a basic role in the modeling of the practical systems,
\begin{align}\label{1}
y_{k+1}=\theta^{*T} x_{k} +w_{k+1},~k\geq0,
\end{align}
where $k$ is the time index, $x_k\in \mathbb{R}^{d},y_{k+1}\in \mathbb{R}^{1}$ are the observed data, $w_{k+1}\in \mathbb{R}^{1}$ is the system noise, and $\theta^*$ is an unknown parameter vector. As far as the identification of system (\ref{1}) is of concern, in the stochastic framework there has been plenty of research on the estimation of the unknown parameter vector $\theta^*$, from the classical approaches such as the least squares (LS) method, the maximum likelihood (ML) method, the instrumental variable (IV) method, and the prediction error (PE) method e.g., \cite{aastrom1971system}\cite{ljung1998system}, to the most recent progresses on the kernel-based regularization (KR) method, e.g., \cite{pillonetto2010new,chen2012estimation,pillonetto2014kernel} and the book \cite{pillonetto2022regularized} as well as the framework of the empirical risk minimization (ERM), e.g., \cite{chaudhuri2011differentially} \cite{jain2014near} \cite{jain2013differentially}.

The most classical method for the identification of system (\ref{1}) is the PE method. With a given data set $\{x_k,y_{k+1}\}_{k=1}^N$, the PE method is given by, see, e.g., \cite{ljung1998system},
\begin{align}
\widehat{\theta}_{N+1}^{PE} & =\underset{\theta}{\arg\min } R_{N+1}^{PE}(\theta), \label{2}\\
R_{N+1}^{PE}\left(\theta\right) & =\frac{1}{N} \sum_{k=1}^{N} \Phi(y_{k+1}-\widehat{y}_{k+1}(\theta,\{x_i\}_{i=1}^k)),\label{3}
\end{align}
where $\widehat{y}_{k+1}$ is a predicted value of $y_{k+1}$ based on $\{x_i\}_{i=1}^{k}$ and $\Phi(\cdot)$ is a function measuring the differences between $\widehat{y}_{k+1}$ and  ${y}_{k+1}$. A standard choice of $\Phi(\cdot)$ would be a quadratic norm $\Phi(t)=\frac{1}{2}t^2$. Other norms include $\Phi(t)=|t|$, i.e., the least absolute deviation (LAD) norm, $\Phi(t)=-\log f_w(t)$ where $f_w(\cdot)$ being the probability density function (pdf) of a stationary random sequence $\{w_k\}_{k\geq0}$ \cite{ljung1998system}, and the norm given from the minimax $M$-estimate \cite{huber2004robust}, etc. In the framework of PE method, the asymptotic properties of $R_{N+1}^{PE}$ and $\widehat{\theta}_{N+1}^{PE}$ have been studied extensively, e.g., 
\cite{lennart1999system,ljung1978convergence}. In particular, under a set of assumptions, 
the uniform convergence and almost sure (a.s.) convergence of $R_{N+1}^{PE}$, i.e., 
\begin{align*}
    \sup_{\theta\in D_{\mathcal{M}}}|R_{N}^{PE}(\theta)-R^{PE}(\theta)|\to 0, \text{a.s. as } N\to\infty,
\end{align*}
where $R^{PE}(\theta)=\lim_{N\to\infty}\mathbb{E}R_{N}^{PE}(\theta)$ and $D_{\mathcal{M}}$ is a compact set in $\mathbb{R}^d$, and the convergence and consistency of the estimator $\widehat{\theta}_{N+1}^{PE}$ were reported in \cite{lennart1999system,ljung1978convergence}. One common assumption in \cite{lennart1999system,ljung1978convergence} is that $\Phi(\cdot)$ needs to be twice differentiable and its second derivative of $\Phi(\cdot)$ always positive (more details can be found in Remark \ref{Remark6} of this paper). 
In \cite{ljung1983theory}, recursive solutions to the general criterion $R^{PE}(\theta)$ were proposed and referred to as the recursive prediction error (RPE) method, and the convergence and consistency of the RPE method was also reported. However, since the convergence analysis of the RPE method is in line with that of the ODE method, which requires the boundedness of the recursions, a project operation is often included in the RPE method (more details can be found in Remark \ref{Remark11} of this paper).

When the data set $\{x_k,y_{k+1}\}_{k=1}^N$ is short and/or has low signal-to-noise ratio, the estimate $\widehat{\theta}^{PE}_{N}$ given by the PE method may have large variance. One more effective way to deal with this issue is to   
estimate $\theta^*$ by using the KR method, e.g., \cite{pillonetto2010new,chen2012estimation,pillonetto2014kernel} and the book \cite{pillonetto2022regularized}.  
% {\color{red}For such a method, it includes two problems: kernel design and hyperparameter estimation. The former often relies on the {\em a priori} information of the underlying system, while the latter can balance bias and variance. } 
% Then the identification of $\theta^*$ falls into a Tikhonov-type variational regression problem on a suitably chosen reproducing kernel Hilbert space (RKHS). 
For the identification of system (\ref{1}), the KR method estimates $\theta^*$ by minimizing the following regularized least squares (RegLS) criterion \begin{align}
\widehat{\theta}_{N+1}^{RegLS}=\mathop{\arg\min}_{\mathcal{\theta}}{\frac{1}{N}\sum_{k=1}^{N}(y_{k+1} -\theta ^{T}x_{k})^2 + \lambda \theta^{T} P^{-1} \theta}\label{4},
\end{align}
where $\lambda>0$ is called the regularization parameter, and the matrix $P$ is symmetric and positive semi-definite and called the kernel matrix. The simulation, experimental and theoretical results in \cite{pillonetto2010new,chen2012estimation,pillonetto2014kernel,mu2018asymptotic} show that with a suitably designed kernel matrix $P$, the KR method can outperform the LS method in terms of the average accuracy and robustness \cite{pillonetto2010new,chen2012estimation,pillonetto2014kernel}  and the mean squared error (MSE) \cite{mu2018asymptotic}. 
% The KR method has been applied in the identification of both linear and nonlinear systems \cite{bottegal2016robust} \cite{bottegal2017new} \cite{risuleo2017nonparametric} \cite{khosravi2021robustness}}
%The key for such success lies in the suitably designed kernel matrix $P$. 
The design of $P$ often involves two steps:  kernel design and hyper-parameter estimation.
%, where the former is about how to embed the {\em a priori} information of the underlying system to be identified by parameterizing the kernel matrix $P$ with some hyper-parameters $\eta$, and the latter is about how to estimate the hyper-parameters $\eta$ based on the data set $\{x_k,y_{k+1}\}_{k=1}^N$. 
The key difficulty to derive a recursive algorithm for the KR method lies in a recursive solution to the hyper-parameter estimation problem, which is still an open problem, e.g., \cite{vau2023recursive}.
% The KR method also has the advantage that $\widehat{\theta}_{N+1}^{RegLS}$ in \eqref{4} has closed-form expression and moreover, re 
% A recursive algorithm to  

% To derive a recursive algorithm for the KR method contains two parts: one part for the solution of \eqref{4} and the other part to the hyper-parameter estimation, and is still an open problem, because it is hard to solve the hyper-parameter estimation problem in a recursive manner, e.g., \cite{}}

 % which has actually been applied in the identification of both linear and nonlinear systems \cite{bottegal2016robust} \cite{bottegal2017new} \cite{risuleo2017nonparametric} \cite{khosravi2021robustness}}

\begin{sloppypar}                           
It is worth noting that the identification of system (\ref{1}) is also referred to as the empirical risk minimization (ERM) in  statistics and machine learning. Define $R_{N+1}^{ERM}(\theta)=\frac{1}{N}\sum_{k=1}^{N}\Phi(y_{k+1}-\theta^{T}x_{k})$ and $R^{ERM}(\theta)=\mathbb{E}\Phi(y_{k+1}-\theta^{T}x_{k})$ provided that the mathematical expectation exists and does not depend on the time index $k$. The estimates generated by the ERM method are usually given by
\begin{align}
\widehat{\theta}_{N+1}^{ERM}=\mathop{\arg\min}_{\mathcal{\theta}}{\frac{1}{N}\sum_{k=1}^{N}\Phi(y_{k+1}-\theta^{T}x_{k}),}\label{5}
\end{align}
where the map $\Phi(\cdot)$ can penalize the deviation between linear predictors $\theta^Tx_k$
and actual outputs $y_{k+1}$. In such a framework, with given $\varepsilon>0$ and $\delta>0$, much effort, see, e.g., \cite{vapnik2000nature}, has been devoted to achieve a high
probability such that $\mathbb{P}\{|R_{N+1}^{ERM}(\widehat{\theta}_{N+1}^{ERM})-R^{ERM}(\theta_{ERM}^{*})|\leq \varepsilon\}\geq 1-\delta$, where $\theta_{ERM}^{*}=\mathop{\arg\min}_{\mathcal{\theta}} R(\theta)$, and to design numerical algorithms for solving (\ref{5}), which are expected to be scalable with the dimension of $\theta$ and the sample size $N$. 
The convergence analysis of the ERM method often relies on some widely applied technical assumptions, e.g., the Lipschitz continuity of $\Phi(y_{k+1}-\theta^Tx_k)$ with respect to $\theta$ for any data point $\{y_{k+1},x_k\}$.
%{\color{red}For theoretical analysis of the ERM method, widely applied technical assumptions include} the observations $\{x_k,y_{k+1}\}_{k\geq0}$ being a sequence of independent and identically distributed (i.i.d.) random signals, the {\color{black}{Lipschitz}} continuity of {\color{red}$\Phi(y_{k+1}-\theta^Tx_k)$ with respect to $\theta$ for any data point $\{y_{k+1},x_k\}$}, and further the {\em a priori} probability distribution function on $\{x_k,y_{k+1}\}_{k\geq0}$. 
See, e.g., \cite{chaudhuri2011differentially} \cite{jain2014near} \cite{jain2013differentially}\cite{murata2017doubly}\cite{schmidt2017minimizing} and references therein. Note that the ERM method is offline and nonrecursive in nature, i.e., when new observations are available, we need to re-optimize the empirical risk function to obtain new estimates.
\end{sloppypar}

Notice that the choice of $\Phi(\cdot)$ is of significance, which has a great influence on the estimate quality such as the effectiveness of the algorithm, robustness and etc. However, there is currently no systematic guidance available for selecting the optimal function $\Phi(\cdot)$ for a particular identification task in practice. Thus it is necessary to investigate the property of the estimate corresponding with the different selection of $\Phi(\cdot)$.
For the above-mentioned methods, they mainly focus on the asymptotic behavior of the minimizer of the optimization-based criterion function.  
Nevertheless, the technical assumptions such as the the second-order differentiability of $\Phi$ with positive values, the Lipschitz continuity of $\Phi(y_{k+1}-\theta^Tx_k)$ with respect to $\theta$ for any data point $\{y_{k+1},x_k\}$, 
etc., are restrictive, since they exclude some classical settings such as the $L_l=|\cdot|^l,~l=1, l>2$ norms and Quantile function, %the autoregressive systems with exogenous inputs (ARX), 
etc. For online identification, the RPE method exhibits certain weaknesses, such as the need for {\em {a priori}} boundedness of recursion.
Motivated by this, here, we introduce a general convex optimization-based criterion function and recursive algorithms for estimating the unknown parameter vector $\theta^*$ of the system (\ref{1}).
\begin{itemize}
\item First, we prove that the minimizer of the general convex optimization criterion converges to the true parameter vector with probability $1$.

\item Second, we show that the framework includes not only the classical LS estimator, but also the $L_l, l\geq 1$, Huber, Log-cosh, and Quantile costs for parameter identification as special cases.

\item Third, we further introduce recursive algorithms that exhibit almost sure convergence to $\theta^*$ for solving the convex optimization-based identification problem.

%\item Numerical examples are given justifying the performance of the proposed algorithms, including the strong consistency of the estimates, the robustness against outliers in the observations, and higher efficiency in online computation compared with the RegLS method due to the recursive nature.

\end{itemize}

Noting that the criterion introduced in this paper is a general convex function, there is no closed-form solution for the estimates. For this reason, the asymptotic convergence is established by analyzing the asymptotic behavior of the convex criterion function, from which the almost sure convergence of the estimates to the true vector parameter $\theta^*$ follows.  
The advantage of the proposed framework is that the convex optimization-based identification problem can be solved by recursive algorithms for the online case. Our recursive algorithm is one of stochastic approximation algorithm with expanding truncations, making the boundedness of the recursion unnecessary prior. % before proving the convergence of the recursion.
%{\color{red}Compared to RPE method \cite{ljung1983theory}, the boundedness of the recursion is not necessary before proving the convergence of the recursion by the analysis tool of stochastic approximation with expanding truncations. %the boundedness of the recursion are not necessary here by the analysis tool of stochastic approximation with expanding truncations. 
%Besides, our recursive version does not involve matrix inversion. 
As far as the authors know, the analysis method used for establishing the almost sure convergence of recursive algorithm in solving the general convex optimization-based identification problem is new, as it does not rely on the ODE approach.

%{\color{black}{Note that \cite{zhao2020general} develops a general optimization framework for the nonparametric identification of nonlinear stochastic systems, i.e., the function value at a fixed point is estimated and the optimization criterion is univariate. Different from \cite{zhao2020general}, the identification criteria in this paper are multivariate and correspondingly the theoretical analysis is more complicated, since the monotonicity of the univariate function does not hold true in the multivariate situation. }}

The remainder of this paper is organized as follows. The convex optimization-based identification criteria and technical assumptions are introduced in Section \ref{sec2}, followed by the asymptotic properties of the estimates. The recursive algorithms with almost sure convergence are established in Section \ref{sec3}. Numerical examples are given in Section \ref{sec4}, and some concluding remarks are provided in Section \ref{sec5}. Some technical lemmas and the detailed proofs of some results are given in the Appendix.

Notation: Let $\{\Omega, \mathcal{F}, \mathbb{P}\}$ be a basic probability space and $\omega$ be an element in $\Omega$. Denote by $\mathbb{E}(\cdot)$ and $\mathbb{E}(\cdot \mid \mathcal{G})$ the mathematical expectation and the conditional expectation with respect to the $\sigma$-algebra $\mathcal{G}$, respectively. Denote by $\operatorname{sgn}(x)$ the sign function, i.e., $\operatorname{sgn}(x)=1$ if $x \geq 0$ and $\operatorname{sgn}(x)=-1$ if $x<0$. The sub-gradient of a function $F(\cdot)$ is denoted by $\partial F(\cdot)$ and the gradient of $F(\cdot)$ is denoted by $\nabla F(\cdot)$, if it exists. Denote $\mathbb{I}_{A}(\cdot)$ as the indicator function on a set $A$, i.e., $\mathbb{I}_{A}(x)=1$ if $ x \in A$;  otherwise, $\mathbb{I}_{A}(x)=0$.
For two positive sequences $\left\{a_{k}\right\}_{k \geq 1}$ and $\left\{b_{k}\right\}_{k \geq 1}$, by $a_{k}=O\left(b_{k}\right)$ we mean $a_{k} \leq c b_{k}, k \geq 1$ for some $c>0$, while $a_{k}=o\left(b_{k}\right)$ means $a_{k} / b_{k} \rightarrow 0$ as $k \rightarrow \infty$.
\section{Convex Optimization-Based Identification Criteria}\label{sec2}
Let us consider the identification of the unknown parameter vector $\theta^*$ of the system (\ref{1}). With a positive convex function $\Phi(\cdot)$ and the set of observations $\left\{x_{k}, y_{k+1}\right\}_{k=1}^{N}$, the criterion function for the identification of $\theta^{*}$ is given by
\begin{align}\label{RN}
R_{N+1}(\theta)=\frac{1}{N}\sum_{k=1}^{N}\Phi(y_{k+1} -\theta ^{T}x_{k}).
\end{align}
The idea is that $\widehat\theta_{N+1}$, which minimizes the above-mentioned convex loss, i.e.,
\begin{align}\label{7}
\widehat\theta_{N+1}=\mathop{\arg\min}_{\theta\in \mathbb{R}^d}{R_{N+1}(\theta)}
\end{align}
will be a good estimate of  $\theta^{*}$ .	
	
\begin{remark}
In addition to the $L_l,l\geq1$ functions, the convex functions $\Phi(\cdot)$ widely applied in systems and control, statistics, and machine learning include the Huber function, the Log-cosh, and the Quantile functions, etc. See, e.g., \cite{wang2022comprehensive}.
\end{remark}

\begin{algorithm}\label{Algm1}
\SetAlgoNoLine
\BlankLine		
Input: $\{x_{k},y_{k+1}\}_{1\le k \le N} $

Compute the estimate
$\widehat\theta_{N+1}=\underset{\theta}{\arg\min } \frac{1}{N} \sum_{k=1}^{N} \Phi(y_{k+1}-\theta^{T}x_{k})$
as an estimate of $\theta^*$.

\caption{Convex Optimization-Based Identification Criteria}
\end{algorithm}

\begin{table}[htbp]
\caption{Class of Convex Functions of $\Phi(\cdot)$}\label{tab1}
\centering
\scalebox{0.9}{
\begin{tabular}{ll}
\toprule
$L_{l}$ function & $\Phi(x)=|x|^{l},~l\geq1$  \\
Huber function & $\Phi_{\delta}(x)=\left\{\begin{array}{ll}
\frac{1}{2}|x|^{2}, & \text { if } \quad|x| \leq \delta \\
\delta|x|-\frac{1}{2} \delta^{2}, & \text { otherwise }
\end{array}, ~~ \delta>0\right.$ \\
Log-cosh function & $ \Phi\left(x \right)=\log(\cosh(x))$ \\
Quantile function & \makecell[l]{$\Phi_{\gamma}(x)=\left\{\begin{array}{ll}				(\gamma-1)x, & \text { if } \quad {\color{black}x< 0} \\
\gamma x, & \text { if } \quad {\color{black}x\geq 0}
\end{array}, \right.$~~$\gamma \in(0,1)$   } \\
\bottomrule 	
\end{tabular}}
\end{table}

To analyze the asymptotical properties of $\widehat\theta_{N+1}$, we introduce the following assumptions on the system (\ref{1}) and the function $\Phi(\cdot)$.

\begin{itemize}
\item[A1)] The function $\Phi(\cdot)$ is nonnegative and {\color{black}{convex}} on $\mathbb{R}$, and $\Phi(t)>\Phi(0), \forall t \neq 0$. For some $c>0$ and $l>0$, $|\Phi(t)| \leq c\left(|t|^{l}+1\right)$, $\forall~t \in \mathbb{R}$. 
	
\item[A2)] The sequence $\left\{x_{k}\right\}_{k \geq 0}$ {\color{black}has a time-invariant} probability density function (pdf) $q(\cdot)$ and {there is $r>0$ such that $q(\cdot)$ is positive over the ball $B_r(0)\subset\mathbb{R}^d$ centered at the origin.} Further, $\mathbb{E}x_k=0$ and $\left\{x_{k}\right\}_{k \geq 0}$ is $\phi$-mixing with mixing coefficients $\left\{\phi_{k}\right\}_{k \geq 0}$ satisfying $\phi_{k} \le \mu \rho^{k}, k \geq 0$ for some $\mu>0$ and $0<\rho<1$, and $\mathbb{E} \|x_{k}\|^{2(l+2)+\epsilon_0} < \infty$ for some $\epsilon_0>0$, where the positive number $l$ is specified in A1).

\item[A3)] $\left\{w_{k+1}\right\}_{k \geq 0}$ is a sequence of i.i.d. random variables with a pdf denoted by $f_{w}(\cdot)$ and $f_{w}(\cdot)$ is an even function and is positive and continuous at the origin. Moreover, $w_{k+1}$ is independent of $x_{k}$ for each $k \geq 0$ and $\mathbb{E} |w_{k+1}|^{2l+\epsilon_0} < \infty $ for some $\epsilon_0>0$, where the positive number $l$ is specified in A1).
%\item[A3)] $\left\{w_{k+1}\right\}_{k \geq 0}$ is a sequence of i.i.d. random variables with a pdf denoted by $f_{w}(\cdot)$ and $f_{w}(\cdot)$ is an even function and is positive and continuous at the origin.

%\item[A4)] $w_{k+1}$ is independent of $x_{k}$ for each $k \geq 0$ and $\mathbb{E} |w_{k+1}|^{2l+{\color{red}\epsilon_0}} < \infty $, $\mathbb{E} \|x_{k}\|^{2(l+2)+{\color{red}\epsilon_0}} < \infty$ for some ${\color{red}\epsilon_0}>0$, where the positive {\color{red}number} $l$ is specified in A1).
\end{itemize}

\begin{remark}\label{r.v.moment}
The definition of $\phi$-mixing is given in Appendix A. The mixing implies that for the observations $x_k$ and $x_{k+h}$, they are asymptotically mutually independent as the time interval $h$ increases. For a linear system, the mixing assumption holds true under certain structure conditions on the system as well as some excitation conditions on the input signals, see, e.g., \cite{zhao2010recursive}.
\end{remark}

\begin{remark}\label{Remk3}
Noting that $\lim_{x \to +\infty } \frac{\log(\cosh(x))}{x}=1$, the convex functions listed in Table 1 satisfy the assumption A1). Note that $\Phi(\cdot)$ is convex and $|\Phi(t)| \le c\left(|t|^{l}+1\right)$. By using $\Phi(y)\ge \Phi(t)+\partial\Phi(t)(y-t)$, it is direct to prove that $|\partial\Phi(t)| \le c\left(|t|^{l}+1\right)$ and $|\Phi(t_2)-\Phi(t_1)|\le (|\partial\Phi(t_1)|+|\partial\Phi(t_2)|)|t_2-t_1|$. %Besides, $\varphi(\cdot)$ is odd since $\Phi(\cdot)$ is symmetric about the $y$-axis.
\end{remark}

\begin{remark}
Notice that \cite{zhao2020general} deals with the univariate convex optimization-based problem. Here we consider the multivariate optimization-based criterion, for $\theta^*\in\mathbb{R}^d$.  It is worth noting that the extension from univariate optimization problem to the multivariate optimization problem is highly nontrivial since the monotonicity of the univariate function does not hold true in the multivariate situation. Furthermore, we employ different techniques for establishing the asymptotic properties of the minimum of optimization-based criterion compared to the equation (19) in \cite{zhao2020general}. Subsequently, the theoretical analysis is more complicated.
\end{remark}

%\begin{remark}\label{Phi-Phi}
%By the relation that $\Phi(t_2)\ge \Phi(t_1)+\partial\Phi(t_1)(t_2-t_1)$, we can prove that $|\Phi(t_2)-\Phi(t_1)|\le (|\partial\Phi(t_1)|+|\partial\Phi(t_2)|)|t_2-t_1|$. In fact, if $\Phi(t_1)\ge\Phi(t_2)$, then we have $\Phi(t_1)-\Phi(t_2)\le -\partial\Phi(t_1)(t_2-t_1) \le |\partial\Phi(t_1)||t_2-t_1|$. Otherwise, if $\Phi(t_2)\ge\Phi(t_1)$, then we can obtain that $\Phi(t_2)-\Phi(t_1)\le  -\partial\Phi(t_2)(t_1-t_2) \le |\partial\Phi(t_2)||t_2-t_1|$ utilising the relation that $\Phi(t_1)\ge \Phi(t_2)+\partial\Phi(t_2)(t_1-t_2)$. Thus it follows that $|\Phi(t_2)-\Phi(t_1)|\le (|\partial\Phi(t_1)|+|\partial\Phi(t_2)|)|t_2-t_1|$.
%\end{remark}

By assumptions A1)--A3), we can define the following time-invariant function
\begin{align}\label{ExpectationRisk}
R(\theta)\triangleq \mathbb{E}(\Phi(y_{k+1} -\theta^{T}x_{k})).
\end{align}
For $R(\theta)$, we have the following results.

\begin{theorem}\label{con}
If assumptions A1), A2), and A3) hold, then $R(\theta)$ is convex with respect to $\theta$ and $\theta^{*}$ is its unique minimizer.
\end{theorem}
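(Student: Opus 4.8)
The plan is to treat $R$ as the expectation of a pathwise convex function and to reduce the minimizer question to a one-dimensional strict-convexity statement handled through Jensen's inequality. First I would verify that $R$ is well defined and finite. Writing the residual as $y_{k+1}-\theta^{T}x_{k}=(\theta^{*}-\theta)^{T}x_{k}+w_{k+1}$ and using the growth bound $|\Phi(t)|\le c(|t|^{l}+1)$ from A1) together with the moment bounds in A2) and A3) and the elementary inequality $|a+b|^{l}\le 2^{l}(|a|^{l}+|b|^{l})$, one gets $\mathbb{E}|\Phi(y_{k+1}-\theta^{T}x_{k})|<\infty$; since $x_{k}$ has the time-invariant density $q$ and $\{w_{k+1}\}$ is i.i.d., this value does not depend on $k$, so $R(\theta)$ in \eqref{ExpectationRisk} is well defined. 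Convexity of $R$ is then immediate, since for each fixed $(x_{k},w_{k+1})$ the map $\theta\mapsto\Phi((\theta^{*}-\theta)^{T}x_{k}+w_{k+1})$ is a convex function composed with an affine map, and expectation preserves convexity.

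To locate the minimizer I would integrate out the noise first. Put $v=\theta^{*}-\theta$ and define $G(u)=\mathbb{E}_{w}\Phi(u+w)$, which is finite by the same bounds and, being an average of convex functions, convex on $\mathbb{R}$. Since $w_{k+1}$ is independent of $x_{k}$ by A3), Fubini gives $R(\theta)=\mathbb{E}_{x}\,G(v^{T}x_{k})$. Because $\mathbb{E}x_{k}=0$ by A2), we have $\mathbb{E}[v^{T}x_{k}]=0$, so Jensen's inequality for the convex $G$ yields $R(\theta)=\mathbb{E}\,G(v^{T}x_{k})\ge G(\mathbb{E}[v^{T}x_{k}])=G(0)=R(\theta^{*})$. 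Hence $\theta^{*}$ is a global minimizer; note that the mean-zero property of $x_{k}$ is precisely what makes this work even for asymmetric $\Phi$ such as the Quantile loss, where the noise symmetry alone would not place the optimum at $v=0$.

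For uniqueness I would show this Jensen inequality is strict whenever $v\neq0$. The equality case of Jensen would force $G$ to be affine on the convex hull of the support of $v^{T}x_{k}$, and I would exclude this by proving that $G$ is strictly convex in a neighborhood of the origin while $v^{T}x_{k}$ genuinely charges such a neighborhood. The latter is where A2) enters: for $v\neq0$, positivity of $q$ on $B_{r}(0)$ makes $v^{T}x_{k}$ absolutely continuous with support containing $(-r\|v\|,\,r\|v\|)$, so it is non-degenerate and cannot be concentrated on a single point of any interval about $0$. The strict convexity of $G$ near $0$ is the main obstacle, since $\Phi$ need not be twice differentiable (e.g. $\Phi(t)=|t|$ or the Quantile loss), so I would argue through one-sided derivatives. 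The hypothesis $\Phi(t)>\Phi(0)$ for $t\neq0$ forces a convex $\Phi$ to be strictly decreasing on $(-\infty,0)$ and strictly increasing on $(0,\infty)$, whence $\Phi'_{-}(-\varepsilon)<0<\Phi'_{+}(\varepsilon)$ for every $\varepsilon>0$; thus the curvature measure $d\Phi'_{+}$ assigns positive mass to every neighborhood of $0$. Differentiating under the expectation, $G'(u_{2})-G'(u_{1})=\mathbb{E}[\Phi'_{+}(u_{2}+w)-\Phi'_{+}(u_{1}+w)]$, and for $u_{1}<0<u_{2}$ small this is strictly positive because $f_{w}$ is positive and continuous at the origin by A3) and therefore puts positive probability on the set of $w$ for which the slope of $\Phi$ strictly increases between $u_{1}+w$ and $u_{2}+w$. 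This makes $G$ strictly convex near $0$, yielding strict Jensen and $R(\theta)>R(\theta^{*})$ for every $\theta\neq\theta^{*}$, which establishes uniqueness.
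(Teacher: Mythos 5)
Your proposal is correct, and its uniqueness argument takes a genuinely different route from the paper's. For the minimizer half the two arguments are essentially the same: both rest on convexity plus $\mathbb{E}x_k=0$ (the paper applies the subgradient inequality $\Phi(s-v^Tu)\ge\Phi(s)-\partial\Phi(s)\,v^Tu$ under the double integral in \eqref{17}, you apply Jensen to the noise-marginalized function $G(u)=\mathbb{E}_w\Phi(u+w)$; these are two phrasings of one idea). For uniqueness, however, the paper argues by contradiction through an explicit construction: it invokes the midpoint strict-convexity fact of Lemma \ref{conv<} ($\Phi(\frac{x+y}{2})<\frac12\Phi(x)+\frac12\Phi(y)$ for $x<0<y$) and exhibits a region $\mathcal{D}=\{(u,s):0<s<u^T(\hat\theta-\theta^*)\}$, intersected with the sets where $f_w>0$ and $q>0$, of positive measure on which the convexity defect $g(\tfrac12,s,u)$ is strictly negative, so that integrating contradicts the equality forced by convexity. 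You instead marginalize the noise first and do all the work on the scalar function $G$: equality in Jensen forces $G$ to coincide with a supporting line on the convex hull of the support of $v^Tx_k$, which by A2) contains $(-r\|v\|,\,r\|v\|)$, and you rule this out via the derivative gap of $G$ across the origin (using $\Phi'_-(-\varepsilon)<0<\Phi'_+(\varepsilon)$, which follows from $\Phi(t)>\Phi(0)$, together with positivity of $f_w$ near $0$). Two small points, neither a gap: your claim that $G$ is ``strictly convex near $0$'' overstates what your argument shows and what you need --- what you actually establish is $G'_+(u_1)<G'_+(u_2)$ for $u_1<0<u_2$, which already forbids $G$ from being affine on any interval containing $0$ in its interior, and that is exactly what the Jensen equality case plus the two-sided support delivers; and the identity $G'_\pm(u)=\mathbb{E}\Phi'_\pm(u+w)$ needs a (routine) justification by monotone convergence on the convex difference quotients under the moment bounds of A2)--A3). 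In exchange, your route isolates a clean one-dimensional object and leans on classical convex-analysis facts, whereas the paper's route is more elementary and self-contained --- no differentiation of an integral, no equality-case analysis --- at the price of being more computational.
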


\begin{proof}
Please see the appendix.
\end{proof}

In practice, the expectation risk function $R(\theta)=\mathbb{E}(\Phi(y_{k+1} -\theta^{T}x_{k}))$ is unknown, since we only have a sequence of observations $\{x_k,y_{k+1}\}_{k\geq0}$. The connection between the empirical risk function $R_N(\theta)$ and {\color{black}the expectation risk }$R(\theta)$ is given as follows.

We first have a technical lemma.

\begin{lemma}\label{Nlim0}
Assume that A2)--A3) hold. Then
\begin{align}
&\frac{1}{N} \sum_{k=1}^{N} \left(\|x_k\|\|y_{k+1}\|^l-\mathbb{E}\|x_k\|\|y_{k+1}\|^l \right) \underset{N\to \infty}{\longrightarrow} 0 \text{ a.s.}\label{e1N}\\
&\frac{1}{N} \sum_{k=1}^{N} \left(\|x_k\|^{l+1}-\mathbb{E}\|x_k\|^{l+1} \right) \underset{N\to \infty}{\longrightarrow} 0 \text{ a.s.}\label{e2N}\\
&\frac{1}{N} \sum_{k=1}^{N} \left(\|x_k\|-\mathbb{E}\|x_k\| \right) \underset{N\to \infty}{\longrightarrow} 0 \text{ a.s.}\label{xN}
\end{align}
where $l>0$ in \eqref{e1N} and \eqref{e2N} is specified by A1).
\end{lemma}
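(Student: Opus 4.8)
These three claims are strong laws of large numbers for centred functionals of the data, so the plan is to reduce each one to a single strong law for $\phi$-mixing sequences and then to verify its hypotheses. Put $\xi_k=\|x_k\|\,|y_{k+1}|^l$, $\eta_k=\|x_k\|^{l+1}$ and $\zeta_k=\|x_k\|$, together with the centred versions $\widetilde\xi_k=\xi_k-\mathbb{E}\xi_k$, $\widetilde\eta_k=\eta_k-\mathbb{E}\eta_k$, $\widetilde\zeta_k=\zeta_k-\mathbb{E}\zeta_k$. Because $\{x_k\}$ has a time-invariant law by A2) and $\{w_{k+1}\}$ is i.i.d. and independent of the regressor process by A3), the joint law of $(x_k,w_{k+1})$, and hence the law of each summand, does not depend on $k$; thus the subtracted expectations are genuinely constant in $k$ and the common means are finite once the second-moment bounds below are in place. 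It then suffices to show $N^{-1}\sum_{k=1}^N\widetilde\xi_k\to0$ a.s., and likewise for $\widetilde\eta_k,\widetilde\zeta_k$.

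First I would settle the dependence structure. The summands $\eta_k$ and $\zeta_k$ are instantaneous Borel functions of $x_k$, so $\{\eta_k\}$ and $\{\zeta_k\}$ inherit the $\phi$-mixing property of $\{x_k\}$ with the same coefficients $\phi_h\le\mu\rho^h$. The summand $\xi_k$ is a function of the pair $(x_k,w_{k+1})$; since $\{w_{k+1}\}$ is i.i.d. and independent of $\{x_k\}$, the bivariate process $\{(x_k,w_{k+1})\}$ is again $\phi$-mixing with coefficients dominated by $\mu\rho^h$ (the noise introduces no long-range dependence, as its disjoint coordinate blocks are independent of one another and of the $x$-process), whence $\{\xi_k\}$ is $\phi$-mixing with a geometric rate as well. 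Next I would verify the moments. Writing $y_{k+1}=\theta^{*T}x_k+w_{k+1}$ and using $|a+b|^l\le c_l(|a|^l+|b|^l)$, one bounds $\mathbb{E}\xi_k^2$ by a finite combination of moments of the form $\mathbb{E}\big(\|x_k\|^{a}|w_{k+1}|^{b}\big)$ with $a\le 2(l+1)$ and $b\le 2l$, each factorising by the independence of $w_{k+1}$ and $x_k$ and finite by $\mathbb{E}\|x_k\|^{2(l+2)+\epsilon_0}<\infty$ in A2) and $\mathbb{E}|w_{k+1}|^{2l+\epsilon_0}<\infty$ in A3); the same bounds trivially give $\sup_k\mathbb{E}\eta_k^2<\infty$ and $\sup_k\mathbb{E}\zeta_k^2<\infty$.

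With a uniformly bounded second moment and geometric $\phi$-mixing secured, I would close the argument by a standard variance estimate. The Ibragimov covariance inequality for $\phi$-mixing sequences, $|\operatorname{Cov}(\widetilde\xi_j,\widetilde\xi_k)|\le 2\phi_{|k-j|}^{1/2}\|\widetilde\xi_j\|_2\|\widetilde\xi_k\|_2$, combined with $\sum_{h\ge1}\phi_h^{1/2}\le\sqrt\mu\sum_{h\ge1}\rho^{h/2}<\infty$, yields $\operatorname{Var}\big(\sum_{k=1}^N\widetilde\xi_k\big)\le CN$. Chebyshev's inequality along the subsequence $N=n^2$ then gives $\mathbb{P}\big(|N^{-1}\sum_{k\le N}\widetilde\xi_k|>\varepsilon\big)\le C\varepsilon^{-2}n^{-2}$, summable in $n$, so $N^{-1}\sum_{k\le N}\widetilde\xi_k\to0$ a.s. along $N=n^2$ by Borel--Cantelli; a Móricz-type maximal inequality for dependent partial sums, whose hypothesis is exactly the block-variance bound just derived, controls the fluctuations over the blocks $[n^2,(n+1)^2)$ and upgrades this to convergence along the full sequence, proving \eqref{e1N}, while the identical argument applied to $\widetilde\eta_k$ and $\widetilde\zeta_k$ gives \eqref{e2N} and \eqref{xN}. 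I expect the main obstacle to lie in the first step rather than the last: one must argue carefully that the composite summand $\xi_k$, in which $\|x_k\|$ and $|y_{k+1}|$ are dependent because both involve $x_k$, nonetheless generates a $\phi$-mixing sequence with the inherited geometric rate; once that reduction and the moment bounds hold, the passage to almost sure convergence is routine.
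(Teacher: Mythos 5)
Your proof is correct, but it takes a genuinely different route from the paper's. The paper reduces each Ces\`aro average to an almost-sure convergence statement for the weighted series $\sum_{k}\frac{1}{k}\bigl(\xi_k-\mathbb{E}\xi_k\bigr)$ via the Kronecker lemma, and then invokes a cited theorem for $\phi$-mixing functionals (Lemma \ref{stoas}, due to Masry), whose hypotheses are a $(2+\epsilon)$-moment summability condition $\sum_k k^{-2}\bigl(\mathbb{E}|\widetilde\xi_k|^{2+\epsilon}\bigr)^{2/(2+\epsilon)}<\infty$ and a mixing-rate condition that is automatic for geometric coefficients; the only real work is the moment bound, done exactly as in your second step by splitting $y_{k+1}=\theta^{*T}x_k+w_{k+1}$ and using independence. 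Your argument instead builds the strong law by hand: Ibragimov's covariance inequality plus $\sum_h\phi_h^{1/2}<\infty$ gives $\operatorname{Var}(S_N)=O(N)$, Chebyshev and Borel--Cantelli give convergence along $N=n^2$, and a M\'oricz-type maximal inequality (or even a crude union bound over the block $[n^2,(n+1)^2)$, which already suffices given the $O(n)$ block variance) fills in the gaps. What your route buys is self-containedness --- only second moments and a summable $\sqrt{\phi_h}$ are needed, with no appeal to the $(2+\epsilon)$-moment machinery --- and, notably, you explicitly address a point the paper passes over in silence: Lemma \ref{stoas} is stated for instantaneous functions of the mixing process $\{x_k\}$, whereas the summand $\|x_k\|\,|y_{k+1}|^l$ depends on the pair $(x_k,w_{k+1})$, so one must check that adjoining the i.i.d. noise (independent of $\{x_k\}$) preserves $\phi$-mixing with a geometric rate; this is a standard fact about mixing coefficients of joined independent processes, and your handling of it is sound. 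What the paper's route buys is economy within its own architecture: the same Kronecker-plus-Masry template is reused verbatim for the weighted-series results needed later (Lemma \ref{Le1} and Lemma \ref{le5}), where the weights $1/k$ or $k^{-(1+l)/(1+2l)}$ are intrinsic and a plain Ces\`aro SLLN would not apply.
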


\begin{proof}
Noting the mixing assumption of $\{x_k\}_{k\geq0}$ in A2), the results in fact establish the law of large numbers for the mixing process $\{x_k\}_{k\geq0}$. The detailed proof is given in the Appendix.
\end{proof}

\begin{theorem}\label{The2}
If assumptions A1), A2), and A3) %A3), and A4) 
hold, then for any fixed $\theta\in\mathbb{R}^d$,
\begin{align}\label{RNR}
R_{N}(\theta) \underset{N \rightarrow \infty}{\longrightarrow} R(\theta)~~{\color{black}\text { a.s. }}
\end{align}
Moreover, with respect to the variable $\theta$, $R_{N}(\theta)$ converges to $R(\theta)$ uniformly in any compact set $\mathcal{M}\subset\mathbb{R}^{d}$ and for $\widehat\theta_N$ generated by (\ref{7}),
\begin{align}\label{minlim}
\widehat\theta_N \underset{N \rightarrow \infty}{\longrightarrow} \theta^{*} \text { a.s. }
\end{align}
\end{theorem}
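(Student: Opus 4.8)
The plan is to prove Theorem~\ref{The2} in three stages: first pointwise a.s.\ convergence of $R_N(\theta)$, then uniform convergence over compacta, and finally the convergence of the minimizer $\widehat\theta_N$ by a standard epiconvergence/argmin argument exploiting convexity. For the pointwise statement \eqref{RNR}, fix $\theta$ and write $R_N(\theta)=\frac1N\sum_{k=1}^N \Phi(y_{k+1}-\theta^Tx_k)$. Since $y_{k+1}-\theta^Tx_k = w_{k+1}+(\theta^*-\theta)^Tx_k$, the summands $\{\Phi(y_{k+1}-\theta^Tx_k)\}_k$ form a $\phi$-mixing sequence (as measurable functions of the mixing process $\{x_k,w_{k+1}\}$), and $R(\theta)=\mathbb{E}\Phi(y_{k+1}-\theta^Tx_k)$ is their common mean by stationarity. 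The growth bound in A1), namely $|\Phi(t)|\le c(|t|^l+1)$, together with the moment assumptions $\mathbb{E}\|x_k\|^{2(l+2)+\epsilon_0}<\infty$ and $\mathbb{E}|w_{k+1}|^{2l+\epsilon_0}<\infty$, guarantees that the summands have finite $(2+\delta)$-moments for some $\delta>0$, so a strong law of large numbers for $\phi$-mixing sequences (the same machinery underlying Lemma~\ref{Nlim0}) yields \eqref{RNR}.

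For the uniform convergence over a compact $\mathcal{M}$, I would not try to control $\sup_\theta|R_N(\theta)-R(\theta)|$ directly by a chaining or bracketing argument; instead I would exploit convexity. Each $R_N(\cdot)$ and the limit $R(\cdot)$ are finite convex functions on $\mathbb{R}^d$, and it is a classical fact that pointwise convergence of convex functions on a dense subset of an open set implies uniform convergence on every compact subset of that set. So the plan is: take a countable dense set $Q\subset\mathbb{R}^d$, apply the pointwise result \eqref{RNR} at each $q\in Q$, and intersect the countably many probability-one events to get a single event of probability one on which $R_N(q)\to R(q)$ for all $q\in Q$ simultaneously. On that event, the convexity lemma upgrades pointwise-on-$Q$ convergence to uniform convergence on any compact $\mathcal{M}$. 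The key quantitative input making this work is the local Lipschitz bound from Remark~\ref{Remk3}, $|\Phi(t_2)-\Phi(t_1)|\le(|\partial\Phi(t_1)|+|\partial\Phi(t_2)|)|t_2-t_1|$ with $|\partial\Phi(t)|\le c(|t|^l+1)$, which controls the oscillation of the convex functions and lets Lemma~\ref{Nlim0} bound the modulus of continuity uniformly in $N$.

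For \eqref{minlim}, I would combine the uniform convergence just established with the facts from Theorem~\ref{con} that $R(\theta)$ is convex with the unique minimizer $\theta^*$. The argument is the standard argmin-consistency step: uniform convergence on compacta of convex functions to a convex limit with a unique minimizer forces the minimizers to converge. Concretely, I would first show $\{\widehat\theta_N\}$ is a.s.\ eventually bounded (otherwise, by convexity and the strict separation $R(\theta)>R(\theta^*)$ guaranteed by uniqueness together with A1)'s coercivity $\Phi(t)>\Phi(0)$ for $t\neq0$, the values $R_N(\widehat\theta_N)$ could not stay below $R_N(\theta^*)\to R(\theta^*)$), so the sequence lives in a fixed compact set $\mathcal{M}$; then any limit point $\bar\theta$ of $\widehat\theta_N$ satisfies $R(\bar\theta)=\lim R_N(\widehat\theta_N)\le\lim R_N(\theta^*)=R(\theta^*)$ by uniform convergence, whence $\bar\theta=\theta^*$ by uniqueness.

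The main obstacle I anticipate is establishing the eventual boundedness of $\{\widehat\theta_N\}$ rigorously, since the loss $\Phi$ need not be coercive in a strong sense (e.g.\ the Quantile loss grows only linearly and the Huber loss is linear at infinity), so coercivity of $R(\theta)$ in the parameter $\theta$ must be extracted from the interplay between $\Phi$ and the distribution of $x_k$ rather than from $\Phi$ alone. This is precisely where assumption A2) that $q(\cdot)$ is positive on a ball $B_r(0)$ around the origin enters: it ensures that for any direction, moving $\theta$ far from $\theta^*$ increases $R(\theta)$ without bound, because a nonnegligible mass of $x_k$ sees the deviation $(\theta-\theta^*)^Tx_k$. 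I would therefore isolate a lemma giving $R(\theta)\to\infty$ as $\|\theta\|\to\infty$, and feed it into the argmin argument above; the rest of the proof is then routine.
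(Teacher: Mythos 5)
Your proposal is correct in outline, and two of its three stages genuinely diverge from the paper's proof. The pointwise step \eqref{RNR} is the same in both (the mixing strong law of Lemma \ref{stoas} plus the moment bounds, exactly as in Lemma \ref{Nlim0}). For uniform convergence, however, the paper does not use the convex-analysis fact you invoke (pointwise convergence of finite convex functions on a dense set implies uniform convergence on compacta); it instead proves equicontinuity of $\{R_N\}_{N\ge1}$ directly, via the subgradient bound of Remark \ref{Remk3} together with the a.s.\ bound $\frac{1}{N}\sum_{k=1}^{N}\left(\|x_k\|\,|y_{k+1}|^l+\|x_k\|^{l+1}+\|x_k\|\right)\le c$ from Lemma \ref{Nlim0}, and then runs a finite $\delta$-net and a three-term triangle inequality. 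Your route reaches the same conclusion with less computation, but it imports a Rockafellar-type theorem the paper never cites, and it makes the Lipschitz estimate you mention superfluous: once pointwise convergence on a countable dense set is in hand, convexity of $R_N$ alone does the upgrading. For the argmin step, the paper restricts to a fixed compact $\mathcal{M}$ containing $\theta^*$ as an interior point, applies the cited Ljung result (Lemma \ref{Lj}) to get $\bar\theta_N=\arg\min_{\theta\in\mathcal{M}}R_N(\theta)\to\theta^*$, and then uses convexity of $R_N$ to conclude that $\bar\theta_N$ is eventually an interior, hence global, minimizer, so $\bar\theta_N=\widehat\theta_N$ for all large $N$. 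This sidesteps entirely the boundedness-of-$\widehat\theta_N$ issue that you identify as the main obstacle.

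On that obstacle, one refinement is needed in your sketch. The coercivity lemma $R(\theta)\to\infty$ as $\|\theta\|\to\infty$ is indeed provable under A1)--A3) (a convex $\Phi$ with unique minimum at $0$ grows at least linearly, and positivity of $q(\cdot)$ on $B_r(0)$ gives, uniformly over unit directions $v$, positive mass to the set where $|v^Tu|\ge r/2$), but coercivity of the \emph{limit} $R$ alone cannot close the argument: uniform convergence holds only on compacta, so you may not replace $R_N(\widehat\theta_N)$ by $R(\widehat\theta_N)$ along an unbounded subsequence. What does close it --- and what your parenthetical already gestures at --- is convexity of $R_N$ itself: fix $\rho>0$ and the sphere $S_\rho=\{\theta:\|\theta-\theta^*\|=\rho\}$; by Theorem \ref{con} and compactness, $\inf_{\theta\in S_\rho}R(\theta)>R(\theta^*)$, so uniform convergence on $S_\rho\cup\{\theta^*\}$ gives $\inf_{\theta\in S_\rho}R_N(\theta)>R_N(\theta^*)$ for all large $N$; then any $\theta$ outside the ball with $R_N(\theta)\le R_N(\theta^*)$ would, by convexity of $R_N$ along the segment joining $\theta^*$ to $\theta$, produce a point of $S_\rho$ with value at most $R_N(\theta^*)$, a contradiction. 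Hence $\widehat\theta_N$ lies in the ball for all large $N$, the coercivity lemma becomes unnecessary, and the rest of your limit-point argument goes through; this convexity trick is, in substance, the same one the paper uses in its Step 3.
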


\begin{proof}
We first prove $R_{N}(\theta) \underset{N \rightarrow \infty}{\longrightarrow} R(\theta) $ almost surely, for which the analysis is similar to Lemma \ref{Nlim0}. The key step is towards establishing (\ref{minlim}), for which, in addition to (\ref{RNR}), we need a stronger result, i.e., $R_{N}(\theta)$ converges to $R(\theta)$ uniformly in any compact set $\mathcal{M}\subset\mathbb{R}^{d}$. This is resorted to a classical result in system identification, see, e.g., Lemma \ref{Lj} in the Appendix or Theorem 8.2 in \cite{lennart1999system}. The detailed proof is given in the Appendix.
\end{proof}

\begin{remark}\label{Remark6}
It is worth to note that \cite{lennart1999system,ljung1978convergence} also studied the asymptotic properties with the criterion function (\ref{RN}) for different choices of $\Phi(\cdot)$ in the framework of PE method. Focusing on the ARX model, our asymptotic results align with the ones in \cite{lennart1999system,ljung1978convergence}, e.g., the convergence (Theorem 8.2) and the consistency (Theorem 8.5) of $\widehat{\theta}_{N}^{PE}$ in \cite{lennart1999system}, but under a different set of assumptions as detailed below:
% However, different assumptions for the model and data are adopted in \cite{lennart1999system}, which are listed as follows.
\begin{itemize}
\item In \cite{lennart1999system}, the ARX model was considered and the involved transfer functions were assumed to be uniformly stable and globally identifiable, and in contrast, the linear regression model \eqref{1} is considered here, and thus the above assumptions are not needed.

\item  In \cite{lennart1999system}, the input signal $\{u_k\}$ and the output signal $\{y_k\}$ were assumed to be jointly quasi-stationary, and the measurement noise is assumed to be with bounded moments of order $4+\delta$ for some $\delta>0$. In contrast, as can be seen in A2) and A3), the regressor $\{x_k\}$ and the measurement noise $\{w_k\}$ are assumed to satisfy different assumptions and in particular, 
$\{x_k\}$ is assumed to be mixing with geometric mixing coefficients, and $\{w_k\}$ is assumed to have $\mathbb{E} |w_{k+1}|^{2l+\epsilon_0} < \infty $ for some $\epsilon_0>0$.

\item In \cite{lennart1999system}, the data set $\{u_1,y_1,u_2,y_2,\ldots\}$ was assumed to be informative to ensure the uniqueness of the minimizer of $R^{PE}(\theta)$, and in contrast, the positiveness of the pdf of $\{x_k\}$ in $B_r(0)$ in A2) is assumed to  ensure the uniqueness of the minimizer of $R(\theta)$.

\item In \cite{lennart1999system}, $\Phi(\cdot)$ is assumed to be twice differentiable and the second-order derivative is always positive, and in contrast, $\Phi(\cdot)$ is assumed to be not necessarily differentiable, as can be seen in A1). 
\end{itemize}
%In addition, \cite{ljung1978convergence} also provided convergence and consistency results for the ARX model under different assumptions on the dataset and the function $\Phi(\cdot)$, e.g., the wide-sense persistent excitation for the input signal and the linear growth rate of the derivative of $\Phi(\cdot)$.
\end{remark}

\begin{remark}
Algorithms \ref{Algm1} establish the convex optimization-based identification criteria for the system (\ref{1}). Note that Algorithms \ref{Algm1} are offline and nonrecursive in nature, i.e., when new observations are available, we need to re-optimize the criteria to obtain the new estimates. This is time-consuming for the online identification. In the next section, we will introduce the recursive algorithms with almost sure convergence to $\theta^*$ for solving the convex optimization-based identification problem.
\end{remark}

\section{Recursive Identification Algorithms}\label{sec3}
%In this section, we proceed to utilise the sequences of observations $\{x_k,y_{k+1}\}_{k\ge 1}$ to design algorithms for recursive identification. 
For the convenience of theoretical analysis, we consider the function $\Phi(\cdot)$ with a continuous derivative in part A, and $\Phi(\cdot)$ being $L_1$ function as well as Quantile function in part B, respectively. 
\subsection{The function $\Phi(\cdot)$ with a continuous derivative $\varphi(\cdot)$}

We strengthen the assumption A1) as follows.

\begin{itemize}
\item[A1')] The function $\Phi(\cdot)$ is nonnegative and {\color{black}{convex}} on $\mathbb{R}$, and $\Phi(t)>\Phi(0), ~\forall t \neq 0$. For some $c>0$ and $l>0$, $|\Phi(t)| \leq c\left(|t|^{l}+1\right)$, $\forall~t \in \mathbb{R}$. Furthermore, the first order derivative of $\Phi(\cdot)$ exists, denoted by $\varphi(\cdot)$, which is continuous on $\mathbb{R}$ and continuously differentiable on $(-\infty, 0) \cup (0, \infty)$ and for some $a>0$, $|\varphi^{(1)}(t)|\le c(|t|^l+1),~\forall~t\in (-\infty, a) \cup (a, \infty)$.
\end{itemize}

\begin{proposition}\label{Prop1}
Assume that A1'), A2), and A3) hold. Then $\nabla R(\theta)=-\mathbb{E}\left[\varphi\left(y_{k+1}-\theta^{T} x_{k}\right) \cdot x_{k}\right]$ and $\theta^{*}$ is the unique zero of the time-invariant function $\mathbb{E}\left[\varphi\left(y_{k+1}-\theta^{T} x_{k}\right) \cdot x_{k}\right]$ with respect to the variable $\theta$.
\end{proposition}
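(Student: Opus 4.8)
The plan is to prove the two assertions in order: first that $R(\theta)$ is differentiable with the stated gradient, obtained by differentiating under the expectation sign, and then that $\theta^{*}$ is the unique zero of this gradient, obtained as a short consequence of the convexity and uniqueness already granted by Theorem \ref{con}. Since $R(\theta)=\mathbb{E}[\Phi(y_{k+1}-\theta^{T}x_{k})]$ is finite for every $\theta$ under A1)--A3) (as used in Theorem \ref{con}), the only real work is justifying the interchange of $\nabla$ and $\mathbb{E}$. Fix $\theta$ and a coordinate direction $e_j$. Applying the mean value theorem to the $C^{1}$ function $\Phi$ gives, for $s\neq 0$,
\[
\frac{\Phi(y_{k+1}-(\theta+se_j)^{T}x_{k})-\Phi(y_{k+1}-\theta^{T}x_{k})}{s}=-\varphi(y_{k+1}-\tilde\theta^{T}x_{k})\,x_{k,j},
\]
where $\tilde\theta$ lies on the segment between $\theta$ and $\theta+se_j$ and $x_{k,j}$ is the $j$-th entry of $x_k$. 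As $s\to 0$, the continuity of $\varphi$ from A1') yields pointwise convergence to $-\varphi(y_{k+1}-\theta^{T}x_{k})\,x_{k,j}$.

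Next I would construct an integrable dominating function valid for all small $s$. From A1') together with Remark \ref{Remk3} we have the growth bound $|\varphi(t)|\le c(|t|^{l}+1)$. For $|s|\le 1$ the point $\tilde\theta$ stays in a fixed bounded neighborhood of $\theta$, so $|y_{k+1}-\tilde\theta^{T}x_{k}|\le |y_{k+1}|+(\|\theta\|+1)\|x_k\|$, and hence the difference quotients are bounded in absolute value by a constant multiple of $\|x_k\|\,|y_{k+1}|^{l}+\|x_k\|^{l+1}+\|x_k\|$. Using $y_{k+1}=\theta^{*T}x_k+w_{k+1}$ and the independence of $w_{k+1}$ and $x_k$ in A3), each of these terms has finite expectation by the moment conditions in A2)--A3); these are precisely the quantities whose averages are controlled in Lemma \ref{Nlim0}. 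Dominated convergence then gives $\partial_{\theta_j}R(\theta)=-\mathbb{E}[\varphi(y_{k+1}-\theta^{T}x_{k})x_{k,j}]$, i.e.\ $\nabla R(\theta)=-\mathbb{E}[\varphi(y_{k+1}-\theta^{T}x_{k})x_{k}]$, and time-invariance of this function follows from the stationarity built into A2)--A3).

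For the second assertion I would invoke Theorem \ref{con}: $R$ is convex, and by the previous step it is differentiable on all of $\mathbb{R}^{d}$. For a convex differentiable function, $\theta$ is a global minimizer if and only if $\nabla R(\theta)=0$. Since Theorem \ref{con} states that $\theta^{*}$ is the unique minimizer of $R$, it is the unique zero of $\nabla R$, and therefore the unique zero of $\mathbb{E}[\varphi(y_{k+1}-\theta^{T}x_{k})x_{k}]=-\nabla R(\theta)$, as claimed.

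I expect the main obstacle to be the uniform-in-$s$ domination required for the interchange. The delicate point is verifying that composing the growth bound on $\varphi$ with $y_{k+1}-\tilde\theta^{T}x_{k}$ produces only terms of the form $\|x_k\|\,|y_{k+1}|^{l}$, $\|x_k\|^{l+1}$, and $\|x_k\|$, whose expectations are finite under A2)--A3); once this is in place, the remainder is a standard first-order characterization of minimizers of a convex function.
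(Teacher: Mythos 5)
Your proposal is correct and follows essentially the same route as the paper's proof: a mean value theorem applied to $\Phi$, the growth bound $|\varphi(t)|\le c(|t|^{l}+1)$ from A1') and Remark \ref{Remk3} to build an integrable dominating function of the form $\|x_k\|\,|y_{k+1}|^{l}+\|x_k\|^{l+1}+\|x_k\|$, the Lebesgue dominated convergence theorem to interchange differentiation and expectation, and finally Theorem \ref{con} combined with the first-order characterization of minimizers of a convex differentiable function to get uniqueness of the zero. The only cosmetic difference is that the paper carries out the domination argument in the integral representation $\int_{\mathbb{R}^d}\int_{\mathbb{R}}\Phi(s-(\theta-\theta^{*})^{T}u)f_w(s)q(u)\,\mathrm{d}s\,\mathrm{d}u$ rather than directly on the random variables, which changes nothing of substance.
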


\begin{proof}
The proof is given in the Appendix.
\end{proof}

\begin{sloppypar}
\begin{remark}
By Theorem \ref{con} and Proposition \ref{Prop1}, the identification of $\theta^*$ can be transformed into the minimization of $\mathbb{E}(\Phi(y_{k+1} -\theta^{T}x_{k}))$, or equivalently, the root-searching of $\mathbb{E}\left[\varphi\left(y_{k+1}-\theta^{T} x_{k}\right) \cdot x_{k}\right]=0$ with respect to $\theta$. However, neither the function $\mathbb{E}(\Phi(y_{k+1} -\theta^{T}x_{k}))$ nor $\mathbb{E}\left[\varphi\left(y_{k+1}-\theta^{T} x_{k}\right) \cdot x_{k}\right]$ is available for identification. This motivates us to introduce the following recursive identification algorithm (Algorithm \ref{Algm3}) by using the sequence of observations $\left\{x_{k}, y_{k+1}\right\}_{k \ge 1}$.
\end{remark}
\end{sloppypar}

\begin{algorithm}\label{Algm3}	
\SetAlgoNoLine
\BlankLine		
Select an arbitrary initial value $\theta_{1}$, \\
Choose $\left\{a_{k}=\frac{1}{k}\right\}_{k \geq 1}$ and $\left\{M_{k}=k^{\frac{1}{1+2 l}}\right\}_{k \geq 1}$ with $l>0$ specified in A1'),\\
For $k\geq1$, recursively compute the estimates
\begin{subequations}
\begin{align}		
\theta_{k+1}=&\left[\theta_{k}+a_{k}x_{k} \varphi\left(y_{k+1}-\theta_{k}^{T} x_{k}\right) \right]\nonumber\\
&\cdot\mathbb{I}_{\left[\left\|\theta_{k}+a_{k} x_{k} \varphi\left(y_{k+1}-\theta_{k}^{T} x_{k}\right) \right\| \le M_{\sigma_{k}}\right]} , \label{Za} \\
\sigma_{k+1}=&\sum_{i=1}^{k}\mathbb{I}_{ \left[\left\|\theta_{i}+a_{i}x_{i} \varphi\left(y_{i+1}-\theta_{i}^{T} x_{i}\right)  \right\|>M_{\sigma_{i}} \right]}, \sigma_{1}=1. \label{Zb} 	
\end{align}
\end{subequations}
\caption{Recursive Algorithm for Function $\Phi(\cdot)$ with a Continuous Derivative $\varphi(\cdot)$}
\end{algorithm}
% \begin{remark}
% For the case that $\Phi(\cdot)=|\cdot|^2$, it is natural to compare Algorithm \ref{Algm3} with the classical recursive least squares (RLS) algorithm to \eqref{3}, which is formulated as follows:
% \begin{align}
% \theta_{k+1}&=\theta_k+a_kP_kx_k(y_{k+1}-\theta_k^Tx_k),\\
% P_{k+1}&=P_k-a_kP_kx_kx_k^TP_k,\label{19b}\\
% a_k&=(1+x_k^TP_kx_k)^{-1}.
% \end{align}
% Clearly, in terms of computation complexity, Algorithm \ref{Algm3} is much cheaper than the RLS algorithm \eqref{19b}, because the matrix-vector product $P_kx_k$ is involved at each recursion in \eqref{19b}. %In addition, it is also worth to mention that the RLS algorithm is derived from the empirical risk $R_N(\theta)$, but Algorithm \ref{Algm3} from the expectation risk $R(\theta)$.  
% \end{remark}

\begin{remark}
The algorithm (\ref{Za})--(\ref{Zb}) is a special case of stochastic approximation algorithm with expanding truncations (SAAWET, \cite{chen2014recursive}). We note that $a_k$ is the stepsize, $\mathbb{I}_{[\cdot]}$ is the indicator function, the integer $\sigma_{k}$ is the number of random truncations up to time $k$ and $M_{\sigma_{k}}$ serves as the truncation bound when the $(k+1)$-th estimate is generated. Set
\begin{align}
&F\left(\theta_{k}\right)\triangleq\mathbb{E}\left[x_{k} \varphi\left(y_{k+1}-\theta^{T} x_{k}\right)\right]\Big|_{\theta =\theta _{k}},\label{19'}\\
&\varepsilon_{k+1}\triangleq x_{k} \varphi\left(y_{k+1}-\theta_{k}^{T} x_{k}\right)-\!\mathbb{E}\left[x_{k} \varphi\left(y_{k+1}-\theta^{T} x_{k}\right)\right]\Big|_{\theta =\theta _{k}}.\label{20'}
\end{align}
Then (\ref{Za}) can be rewritten as
\begin{align}
\theta_{k+1}=&\left[\theta_{k}+a_{k}\left(F\left(\theta_{k}\right)+\varepsilon_{k+1}\right) \right]\nonumber\\ &\cdot\mathbb{I}_{\left[\left\|\theta_{k}+a_{k}\left(F\left(\theta_{k}\right)+\varepsilon_{k+1}\right) \right\| \leqslant M_{\sigma_{k}}\right]},
\end{align}
which recursively finds the root of $\mathbb{E}\left[\varphi\left(y_{k+1}-\theta^{T} x_{k}\right) \cdot x_{k}\right]$ based on the noisy observations $F\left(\theta_{k}\right)+\varepsilon_{k+1}$. Note that the analysis of the noise $\varepsilon_{k+1}$ is more complicated compared with the classical Robins-Monro (RM) algorithm, since it depends on the past estimate $\theta_k$ for $\theta^*$. Using the expanding truncation bounds, the conditions required for convergence of SAAWET are significantly weaker compared with those for the RM algorithm. See Chapters 1 and 2 in \cite{chen2014recursive}.
\end{remark}

We now analyze the convergence of Algorithm \ref{Algm3}. By Theorem \ref{ThmA1} given in the Appendix, for the convergence of $\{\theta_{k}\}_{k\geq1}$, it suffices to verify the noise condition on $\{\varepsilon_{n_k+1}\}_{k\ge1}$ along $\{n_k\}_{k\ge 1}$ of any convergent subsequence $\{\theta_{n_k}\}_{k\ge 1}$ of $\{\theta_{k}\}_{k\ge1}$ and to construct a Lyapunov function for $F(\cdot)$. The key steps towards establishing the convergence of $\{\theta_{k}\}_{k\geq1}$ are as follows.
\begin{itemize}
\item[$\bullet$] Prove that for any convergent subsequence $\{\theta_{n_{k}}\}_{k\ge1}$, there is no truncation of $\theta_{i},~i= n_k,\ldots,m(n_k,T)$ for all sufficiently large $k$ and $T>0$ small enough, where $m(n_k,T)=\max\{j:\sum_{i=n_k}^{j}a_i\le T\}$.

\item[$\bullet$] Prove that for the noise $\left\{\varepsilon_{k}\right\}_{k \geq 1}$,
$$\lim _{T \rightarrow 0} \limsup _{k \rightarrow \infty} \frac{1}{T}\Big|\sum_{i=n_{k}}^{m\left(n_{k}, T\right)} a_{i} \varepsilon_{i+1}\Big|=0$$
along the index sequence $\left\{n_{k}\right\}_{k \ge 0}$ of any convergent subsequence $\left\{\theta_{n_{k}}\right\}_{k \ge 1}$ of $\left\{\theta_{k}\right\}_{k \ge 1}$ and construct a Lyapunov function $V(\cdot)$ such that $\nabla V(u)^T \cdot F(u)\leq 0$.

\end{itemize}

We first have some technical lemmas. By carrying out a similar analysis as Lemma \ref{Nlim0}, we can obtain the following result.

\begin{lemma}\label{Le1}
Under the assumptions A1'), A2), and A3), we have
\begin{align}
&\sum_{k=1}^{\infty}{\frac{1}{k^{\frac{1+l}{1+2l}}}}\left(\left\| x_{k}\right\|^{l+1}-\mathbb{E}\left\|x_{k}\right\|^{l+1}\right) <\infty \text { a.s. \label{e3} }\\
&\sum_{k=1}^{\infty}{\frac{1}{k}}\left(\left\| x_{k}\right\|^{2}\left\|y_{k+1}\right\|^{l}-\mathbb{E}\left\| x_{k}\right\|^{2}\left\|y_{k+1}\right\|^{l}\right) <\infty \text { a.s. }\label{e5} \\
&\sum_{k=1}^{\infty}{\frac{1}{k}}\left(\left\| x_{k}\right\|^{l+2}-\mathbb{E}\left\| x_{k}\right\|^{l+2}\right) <\infty \text { a.s. }\label{e6}
\end{align}
with $l>0$ specified in A1') and for any fixed $\theta \in \mathbb{R}^{d}$,
\begin{align}
\label{e4} &\sum_{k=1}^{\infty} \frac{1}{k}\left[ x_{k}\varphi\left(y_{k+1}-\theta^{T}x_{k}\right)-\mathbb{E}x_{k}\varphi\left(y_{k+1}-\theta^{T}x_{k}\right) \right] <\infty \text { a.s. }
\end{align}
\end{lemma}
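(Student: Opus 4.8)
The plan is to treat all four displays by a single template. In each case the summand has the form $b_k\zeta_k$ with a deterministic weight $b_k>0$ and a centered random variable $\zeta_k=\eta_k-\mathbb{E}\eta_k$, where $\eta_k$ is a measurable function of $(x_k,w_{k+1})$: in \eqref{e3} take $b_k=k^{-(1+l)/(1+2l)}$ and $\eta_k=\|x_k\|^{l+1}$, while in \eqref{e5}, \eqref{e6}, \eqref{e4} take $b_k=1/k$ with $\eta_k$ equal to $\|x_k\|^2\|y_{k+1}\|^l$, $\|x_k\|^{l+2}$, and $x_k\varphi(y_{k+1}-\theta^Tx_k)$ respectively. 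I would first check the two deterministic facts that drive everything: $\sum_k b_k^2<\infty$ (for \eqref{e3} the exponent $2(1+l)/(1+2l)>1$ for every $l>0$, while for the others $b_k^2=k^{-2}$), and $\sup_k\mathbb{E}\|\zeta_k\|^2<\infty$. The uniform second-moment bound follows from A2) and A3) together with the growth estimate $|\varphi(t)|\le c(|t|^l+1)$ from Remark \ref{Remk3}: writing $y_{k+1}=\theta^{*T}x_k+w_{k+1}$ and using the independence of $w_{k+1}$ and $x_k$, each $\mathbb{E}\|\eta_k\|^2$ is dominated by a finite combination of the moments $\mathbb{E}\|x_k\|^{2(l+2)+\epsilon_0}$ and $\mathbb{E}|w_{k+1}|^{2l+\epsilon_0}$, both guaranteed finite (the sharpest requirement, $\mathbb{E}\|x_k\|^{2(l+2)}<\infty$ for \eqref{e6}, is exactly what the $+\epsilon_0$ slack in A2) provides).

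Next I would record that the bivariate sequence $\{(x_k,w_{k+1})\}_{k\ge1}$ is again $\phi$-mixing with geometrically decaying coefficients: the $x$-part is mixing by A2), the $w$-part is i.i.d. and independent of the $x$-part by A3), so dependence across positive lags inherits the bound $\phi_k\le\mu\rho^k$. Consequently each $\zeta_k$ is a centered functional of this $\phi$-mixing sequence, and the $\phi$-mixing covariance inequality $|\mathrm{Cov}(\zeta_i,\zeta_j)|\le 2\phi_{|i-j|}^{1/2}(\mathbb{E}\|\zeta_i\|^2)^{1/2}(\mathbb{E}\|\zeta_j\|^2)^{1/2}$ applies. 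Summing the geometric off-diagonal and using $b_ib_j\le\tfrac12(b_i^2+b_j^2)$, this yields $\mathbb{E}\big\|\sum_{k=m+1}^{n}b_k\zeta_k\big\|^2\le C\sum_{k=m+1}^{n}b_k^2$, so the partial sums form a Cauchy sequence in $L^2$ and the series converges in mean square.

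The only genuinely delicate step is upgrading this quasi-orthogonality into almost sure convergence of the series. I would do this with a maximal inequality of M\'oricz / Rademacher--Menshov type for sums with geometrically decaying correlations: the quasi-orthogonality bound above promotes to $\mathbb{E}\max_{m<n\le 2m}\big\|\sum_{k=m+1}^{n}b_k\zeta_k\big\|^2\le C(\log m)^2\sum_{k=m+1}^{2m}b_k^2$, after which a dyadic-block decomposition together with the Borel--Cantelli lemma gives the a.s. convergence, provided $\sum_k b_k^2(\log k)^2<\infty$. This last series converges in every case because the exponents in $b_k^2$ are \emph{strictly} larger than $1$, so the logarithmic loss in the maximal inequality is harmless. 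I expect this passage from mean-square to almost sure convergence --- controlling the maximal fluctuations of a merely quasi-orthogonal (neither orthogonal nor martingale-difference) sequence --- to be the main obstacle, and it is presumably where the detailed appendix argument, paralleling the proof of Lemma \ref{Nlim0}, is spent.
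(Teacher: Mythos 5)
Your proposal is correct, but it takes a genuinely different route from the paper. The paper proves this lemma exactly as it proves Lemma \ref{Nlim0}: it absorbs the weight $b_k$ into the functional and applies the cited strong law for functionals of $\phi$-mixing processes (Masry's result, reproduced as Lemma \ref{stoas}), which requires verifying two conditions --- a moment condition $\sum_k \big(\mathbb{E}|b_k(\eta_k-\mathbb{E}\eta_k)|^{2+\epsilon}\big)^{2/(2+\epsilon)}<\infty$, checked via the same growth bounds on $\varphi$ and the $(2+\epsilon)$-th moment bounds that A2)--A3) provide through the $\epsilon_0$ slack, and a mixing-rate condition $\sum_k \log k(\log\log k)^{1+\gamma}\phi_k^{\epsilon/(2+\epsilon)}<\infty$, immediate from $\phi_k\le\mu\rho^k$. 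You instead rebuild the a.s.\ convergence criterion from first principles: Ibragimov's covariance inequality for $\phi$-mixing gives the quasi-orthogonality bound $\mathbb{E}\|\sum_{k=m+1}^{n}b_k\zeta_k\|^2\le C\sum_{k=m+1}^{n}b_k^2$, and the M\'oricz/Rademacher--Menshov maximal inequality with dyadic blocking and Borel--Cantelli upgrades this to almost sure convergence under $\sum_k b_k^2(\log k)^2<\infty$, which holds because every exponent ($2(1+l)/(1+2l)=1+1/(1+2l)$ for \eqref{e3}, and $2$ for the rest) is strictly larger than $1$. The trade-off: the paper's route is shorter (verify hypotheses of a quoted lemma) but needs $(2+\epsilon)$-th moments of the summands; yours is self-contained and needs only uniformly bounded second moments, at the price of supplying the maximal-inequality machinery that is, in effect, hidden inside the cited lemma. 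One point where you are actually more explicit than the paper: since the summands depend on $(x_k,w_{k+1})$ rather than on $x_k$ alone, both arguments need the \emph{joint} process to be $\phi$-mixing with geometric coefficients, which requires reading A3) as independence between the noise sequence and the regressor sequence; you state this step, whereas the paper leaves it implicit.
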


The next lemma indicates that for the convergent subsequence $\left\{\theta_{n_{k}}\right\}_{k \geq 1}$ of $\left\{\theta_{k}\right\}_{k \geq 1}$, there is no truncation for $\theta_{j+1},~j=n_{k}, \ldots, m\left(n_{k}, T\right)$ if $k$ is large enough.

\begin{lemma}\label{Le2}
Assume that A1')--A3) hold and $\left\{\theta_{n_{k}}\right\}_{k \geq 1}$ is a convergent subsequence of $\left\{\theta_{k}\right\}_{k \geq 1}$ generated from Algorithm \ref{Algm3}, i.e., $\theta_{n_{k}} \underset{k \rightarrow \infty}{\longrightarrow} \bar{\theta}$. Then for any fixed $T>0$ small enough, it holds that for all $k$ sufficiently large, there is no truncation for $\theta_{j+1},~j=n_{k}, \ldots, m\left(n_{k}, T\right)$ and
\begin{align}\label{cT}
\left\|\theta_{j+1}-\theta_{n_{k}}\right\| \leq C T,~~j=n_{k}, \ldots, m\left(n_{k}, T\right)
\end{align}
where $C>0$ is a constant possibly depending on $\omega$.
\end{lemma}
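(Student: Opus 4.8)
The plan is to establish both conclusions simultaneously by an induction over the horizon $i=n_{k},\dots,m(n_{k},T)$. Write the untruncated candidate as $\theta_i^{+}:=\theta_i+a_ix_i\varphi(y_{i+1}-\theta_i^{T}x_i)$, so that a truncation at step $i$ amounts to $\|\theta_i^{+}\|>M_{\sigma_i}$, while as long as no truncation has occurred since $n_{k}$ one has $\sigma_i=\sigma_{n_{k}}$ and, by (\ref{19'})--(\ref{20'}), $\theta_{i+1}-\theta_i=a_i\bigl(F(\theta_i)+\varepsilon_{i+1}\bigr)$. Put $c_0:=\|\bar\theta\|+1$; since $\theta_{n_{k}}\to\bar\theta$, for all large $k$ we have $\|\theta_{n_{k}}\|\le c_0$, and the inductive hypothesis I carry is that $\theta_i$ stays in the ball $\{\|\theta-\bar\theta\|\le 1\}$ with $\|\theta_i-\theta_{n_{k}}\|\le CT$. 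Two a priori estimates feed this induction. First, a single-step smallness bound $a_i\|x_i\varphi(y_{i+1}-\theta_i^{T}x_i)\|\to 0$ a.s. whenever $\theta_i$ stays bounded, which follows from the growth bound $|\varphi(t)|\le c(|t|^{l}+1)$ of Remark \ref{Remk3}, the moment conditions in A2)--A3), and the Borel--Cantelli lemma, and which prevents the candidate from overshooting the truncation bound at the final step. Second, by Proposition \ref{Prop1} the drift $F(\theta)=-\nabla R(\theta)$ is continuous, hence bounded by some $L$ on the compact ball above, so that $\bigl\|\sum_{i=n_{k}}^{j}a_iF(\theta_i)\bigr\|\le L\sum_{i=n_{k}}^{j}a_i\le LT$ for every $j\le m(n_{k},T)$ while the trajectory remains in that ball.

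The crux is to control the noise sum $\bigl\|\sum_{i=n_{k}}^{j}a_i\varepsilon_{i+1}\bigr\|$ uniformly in $j\le m(n_{k},T)$; the difficulty is that $\varepsilon_{i+1}$ depends on the estimate-valued argument $\theta_i$, so Lemma \ref{Le1} is not directly applicable. I would localize around $\bar\theta$: let $\tilde\varepsilon_{i+1}$ denote the noise (\ref{20'}) with $\theta_i$ replaced by the fixed vector $\bar\theta$. The series (\ref{e4}) at $\theta=\bar\theta$ converges, so its tails vanish and $\bigl\|\sum_{i=n_{k}}^{j}a_i\tilde\varepsilon_{i+1}\bigr\|\to 0$ as $k\to\infty$, uniformly in $j$. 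For the difference $\varepsilon_{i+1}-\tilde\varepsilon_{i+1}$ I would apply the mean-value/Lipschitz estimate for $\varphi$ from A1') and Remark \ref{Remk3}, obtaining a bound of the form $C\|\theta_i-\bar\theta\|\bigl(\|x_i\|^{2}\|y_{i+1}\|^{l}+\|x_i\|^{l+2}+1\bigr)$. Summing against $a_i$ and splitting the bracket into its mean and a centered part, the centered part has vanishing tails by the convergent series (\ref{e5})--(\ref{e6}), while the mean part contributes $O(1)\cdot\sum_{i=n_{k}}^{j}a_i=O(T)$; hence the whole difference term is $(o(1)+O(T))\sup_i\|\theta_i-\bar\theta\|$. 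Thus, while the trajectory stays in the unit ball, the full noise sum is $o(1)+O(T)$.

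Combining the drift and noise bounds, on the event that no truncation has occurred up to step $j$ one gets $\|\theta_{j+1}-\theta_{n_{k}}\|\le LT+o(1)+O(T)\le CT$ for $T$ small and $k$ large; this is exactly (\ref{cT}) and it keeps $\theta_{j+1}$ inside the unit ball, so the induction propagates provided no truncation is triggered. To rule out truncation it suffices that $M_{\sigma_{n_{k}}}$ eventually exceed $c_0+1$, because $\|\theta_j^{+}\|\le\|\theta_j\|+a_j\|x_j\varphi(\cdots)\|\le c_0+CT+o(1)$ by the single-step estimate. I would settle this by the standard dichotomy: if infinitely many truncations occur then $\sigma_k\to\infty$, hence $M_{\sigma_{n_{k}}}=\sigma_{n_{k}}^{1/(1+2l)}\to\infty$; if only finitely many occur, then for large $k$ the index $n_{k}$ lies beyond the last truncation and no truncation is possible on $[n_{k},\infty)$. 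Either way the induction closes for all large $k$, yielding both conclusions.

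The main obstacle is the noise estimate of the second paragraph: decoupling $\varepsilon_{i+1}$ from the random argument $\theta_i$ and upgrading the pointwise (fixed-$\theta$) summability of Lemma \ref{Le1} into a bound that is uniform over the entire $T$-horizon. This is precisely where the auxiliary moment sums (\ref{e5})--(\ref{e6}) and the growth/Lipschitz control of $\varphi$ are needed, and where the inductive hypothesis that $\theta_i$ remains near $\bar\theta$ must be fed back to close the argument, since that hypothesis is what makes the localization error $\varepsilon_{i+1}-\tilde\varepsilon_{i+1}$ of order $T$ rather than uncontrolled.
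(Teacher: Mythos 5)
Your inductive skeleton is not where the problem lies: the dichotomy on finitely versus infinitely many truncations, the single-step smallness bound, and the induction that keeps $\theta_i$ in a fixed ball so that no truncation can trigger once $M_{\sigma_{n_k}}$ exceeds a fixed level are all legitimate, and in fact organize the argument more leanly than the paper does (the paper's proof, in the infinitely-truncating case, first runs a preliminary phase of $[n_k^{1/(1+2l)}]$ steps using only the unconditional bound $\|\theta_i\|\le M_{i-1}=(i-1)^{1/(1+2l)}$ — this is where the special series (\ref{e3}) and the specific choice $M_k=k^{1/(1+2l)}$ enter — and only then switches to a bounded-state induction). The genuine gap is in what you yourself call the crux: the noise estimate. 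The ``mean-value/Lipschitz estimate for $\varphi$'' that you invoke to bound $\varepsilon_{i+1}-\tilde\varepsilon_{i+1}$ by $C\|\theta_i-\bar\theta\|\left(\|x_i\|^{2}\|y_{i+1}\|^{l}+\|x_i\|^{l+2}+1\right)$ does not follow from A1') or Remark \ref{Remk3}. A1') bounds $\varphi^{(1)}$ only away from a neighborhood of the origin, where $\varphi$ is merely continuous, and Remark \ref{Remk3} controls differences of $\Phi$, not of $\varphi$. For the admissible choice $\Phi(t)=|t|^{3/2}$ one has $\varphi(t)=\tfrac{3}{2}|t|^{1/2}\operatorname{sgn}(t)$, which is not locally Lipschitz at $0$ (indeed $\varphi^{(1)}$ blows up there), so your estimate fails whenever the argument $y_{i+1}-\theta^{T}x_i$ crosses the origin as $\theta$ moves from $\theta_i$ to $\bar\theta$. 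This is precisely the difficulty to which the paper's proof of Lemma \ref{Lem4} devotes its entire case analysis (mean value theorem only on the event $|y_{i+1}-\theta_i^{T}x_i|>a+1$, and uniform continuity of $\varphi$ on $[-a-2,a+2]$ plus an indicator-splitting argument otherwise); it cannot be assumed away.

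There is a second, compounding gap: you apply (\ref{e4}) at $\theta=\bar\theta$. Lemma \ref{Le1} holds for each fixed deterministic $\theta$, with an exceptional null set that depends on $\theta$; $\bar\theta$ is a random variable, so this substitution is not legitimate as stated. The paper's proof of Lemma \ref{Lem4} is structured exactly around this obstacle — a countable dense set $\mathcal{S}^{d}$ on which (\ref{e4}) holds simultaneously a.s., approximants $\theta(n)\to\bar\theta$, and a four-way noise decomposition whose difference terms are handled by the case-split estimate above — so your first gap also blocks the natural repair of the second. Both gaps are avoidable for this particular lemma, and that is what the paper does: its proof of Lemma \ref{Le2} never splits the increment into drift plus noise and never takes a difference of two $\varphi$-values. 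It bounds the whole increment $a_i x_i\varphi(y_{i+1}-\theta_i^{T}x_i)$ via the growth condition $|\varphi(t)|\le c(|t|^{l}+1)$, so that under the inductive bound on $\|\theta_i\|$ every series that appears is a data-only series ((\ref{xyl}), (\ref{xl+1}), (\ref{x1}), and (\ref{e3}) in the infinitely-truncating case), to which Lemma \ref{Le1} applies directly. If you replace the second paragraph of your proposal by this crude bound, your induction closes; as written, its central estimate is unsupported.
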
	

\begin{proof}
Please see the Appendix.
\end{proof}

For the noise $\varepsilon_{k+1}$ defined by (\ref{20'}), we have the following result.

\begin{lemma}\label{Lem4}
If A1'), A2), and A3) hold, then for any convergent subsequence $\left\{\theta_{n_{k}}\right\}_{k \geq 1} $ of $ \left\{\theta_{k}\right\}_{k \geq 1}$, it holds that
\begin{align}\label{noise}
\lim _{T \rightarrow 0} \limsup _{k \rightarrow \infty} \frac{1}{T}\Big\|\sum_{i=n_{k}}^{m\left(n_{k}, t_{k}\right)} a_{i} \varepsilon_{i+1}\Big\|=0,~~\forall t_{k} \in[0, T]~~\mathrm{a.s.}
\end{align}		
\end{lemma}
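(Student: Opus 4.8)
The plan is to exploit that along a convergent subsequence $\theta_{n_k}\to\bar{\theta}$ the iterates are ``frozen'' near $\bar{\theta}$ on the whole window $[n_k,m(n_k,T)]$, so that the $\theta$-dependent noise $\varepsilon_{i+1}$ can be compared with the noise evaluated at a fixed reference point, for which the weighted partial sums are already controlled by Lemma~\ref{Le1}. Writing $g(\theta,x,y)\triangleq x\,\varphi(y-\theta^{T}x)$ and $F(\theta)=\mathbb{E}\,g(\theta,x_k,y_{k+1})$, we have $\varepsilon_{i+1}=g(\theta_i,x_i,y_{i+1})-F(\theta_i)$ by \eqref{20'}--\eqref{19'}. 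By Lemma~\ref{Le2}, for all large $k$ and $T$ small enough there is no truncation on the window and $\|\theta_{j+1}-\theta_{n_k}\|\le CT$ for $j=n_k,\dots,m(n_k,T)$; combined with $\theta_{n_k}\to\bar{\theta}$ this keeps every $\theta_i$ with $i\in[n_k,m(n_k,T)]$ inside a fixed compact ball around $\bar{\theta}$ of radius $CT+o(1)$.

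The first step is the decomposition, for a reference point $\vartheta$ to be chosen,
\[
\varepsilon_{i+1}=\big[g(\theta_i,x_i,y_{i+1})-g(\vartheta,x_i,y_{i+1})\big]-\big[F(\theta_i)-F(\vartheta)\big]+\big[g(\vartheta,x_i,y_{i+1})-F(\vartheta)\big].
\]
For the first bracket I would use the local Lipschitz property of $\varphi$ implied by A1'): from $|\varphi^{(1)}(t)|\le c(|t|^l+1)$ together with the mean value theorem (as in Remark~\ref{Remk3}) one gets $\|g(\theta_i,x_i,y_{i+1})-g(\vartheta,x_i,y_{i+1})\|\le c\|x_i\|^2\big(\|y_{i+1}\|^l+\|x_i\|^l+1\big)\|\theta_i-\vartheta\|$. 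For the second bracket, the continuity (indeed local Lipschitz continuity) of $F=-\nabla R$ from Proposition~\ref{Prop1} gives $\|F(\theta_i)-F(\vartheta)\|\le L\|\theta_i-\vartheta\|$ on the ball.

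The two drift brackets are then handled by the law-of-large-numbers estimates of Lemma~\ref{Le1}. Since $\sum_{i=n_k}^{m(n_k,T)}a_i\le T$ and, by \eqref{e5}--\eqref{e6} together with $\|x\|^2\le 1+\|x\|^{l+2}$, the weighted sums $\sum_{i=n_k}^{m(n_k,T)}a_i\|x_i\|^2(\|y_{i+1}\|^l+\|x_i\|^l+1)$ converge to $T\,\mathbb{E}[\cdots]+o(T)$, the normalized contribution of the first two brackets is $O\big(CT+\|\vartheta-\bar{\theta}\|\big)$ uniformly over $t_k\in[0,T]$, because on the window $\|\theta_i-\vartheta\|\le CT+\|\vartheta-\bar{\theta}\|+o(1)$.

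The genuinely hard part is the third bracket, the noise frozen at $\vartheta$, because the a.s.\ convergence of $\sum_i a_i\big[g(\theta,x_i,y_{i+1})-F(\theta)\big]$ supplied by \eqref{e4} holds for each \emph{fixed} $\theta$ only up to a $\theta$-dependent null set, whereas $\bar{\theta}=\bar{\theta}(\omega)$ is random (and $\varepsilon_{i+1}$ is not a martingale difference, since $x_i$ is mixing and hence correlated with $\theta_i$, so martingale arguments are unavailable). I would resolve this by a countable-dense-set argument: fix a countable dense set $\{\vartheta^{(p)}\}\subset\mathbb{R}^d$, apply \eqref{e4} to each $\vartheta^{(p)}$, and discard the countable union of null sets; then, given $\eta>0$, choose $\vartheta=\vartheta^{(p)}$ with $\|\vartheta^{(p)}-\bar{\theta}\|<\eta$, which is legitimate since $\bar{\theta}$ is fixed once $\omega$ is fixed. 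For this fixed $\vartheta^{(p)}$ the convergence of the series makes its tail sums uniformly small, so $\sup_{t_k\in[0,T]}\tfrac1T\|\sum_{i=n_k}^{m(n_k,t_k)}a_i[g(\vartheta^{(p)},x_i,y_{i+1})-F(\vartheta^{(p)})]\|\to0$ as $k\to\infty$. Collecting the three estimates gives $\limsup_{k}\tfrac1T\|\sum a_i\varepsilon_{i+1}\|\le \mathrm{const}\cdot(CT+\eta)$; letting $T\to0$ and then $\eta\to0$ yields \eqref{noise}. The main obstacle is exactly this transfer from fixed-$\theta$ to random-$\bar{\theta}$ noise, which the freezing lemma (Lemma~\ref{Le2}), the local Lipschitz control of $g$ and $F$, and the dense-grid continuity argument are designed to overcome.
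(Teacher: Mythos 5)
Your overall strategy coincides with the paper's: freeze the iterates on the window via Lemma~\ref{Le2}, compare $\varepsilon_{i+1}$ with the noise frozen at a point of a countable dense set (so that \eqref{e4} of Lemma~\ref{Le1} applies outside a single null set), and control the comparison terms using $\|\theta_i-\vartheta\|\le CT+\eta+o(1)$ on the window. The dense-grid device for passing from fixed $\theta$ to the random limit $\bar{\theta}$, and the tail-sum treatment of the frozen-noise bracket, are exactly the paper's.

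There is, however, a genuine gap in your bound for the first bracket. You claim that A1') and the mean value theorem give $\|g(\theta_i,x_i,y_{i+1})-g(\vartheta,x_i,y_{i+1})\|\le c\|x_i\|^2\bigl(\|y_{i+1}\|^l+\|x_i\|^l+1\bigr)\|\theta_i-\vartheta\|$. This pathwise Lipschitz-in-$\theta$ bound does not follow from A1') and is false for admissible $\Phi$: A1') bounds $\varphi^{(1)}$ only outside a neighborhood of the origin (the region $|t|>a$), while near the origin $\varphi$ is merely continuous and can fail to be locally Lipschitz. Concretely, $\Phi(t)=|t|^{3/2}$ satisfies A1') with $\varphi(t)=\tfrac{3}{2}\,\mathrm{sgn}(t)|t|^{1/2}$, whose increments behave like $|t_1-t_2|^{1/2}$ near zero, so your inequality fails precisely on the events where $y_{i+1}-\theta_i^{T}x_i$ is close to $0$. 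Remark~\ref{Remk3} cannot rescue this: the sub-gradient inequality there controls increments of the convex function $\Phi$, not of $\varphi$, which is not convex. This is exactly where the paper's proof spends most of its effort, via a case split: (i) $|(\theta_i-\theta(n))^{T}x_i|>1$, where the growth bound on $\varphi$ itself is multiplied by the increment; (ii) $|(\theta_i-\theta(n))^{T}x_i|\le1$ and $|y_{i+1}-\theta_i^{T}x_i|>a+1$, where the segment joining the two arguments of $\varphi$ stays in $\{|t|>a\}$ and the MVT with the A1') derivative bound is legitimate; (iii) the remaining compact region with $|(\theta_i-\theta(n))^{T}x_i|<\delta$, where only uniform continuity of $\varphi$ on $[-a-2,a+2]$ is available; and (iv) $|(\theta_i-\theta(n))^{T}x_i|\ge\delta$, handled by $1\le|(\theta_i-\theta(n))^{T}x_i|/\delta$ together with the growth bound. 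Without such an argument your estimate of the first bracket collapses, and the same issue resurfaces when you need a quantitative modulus for it uniformly over the window. A secondary, more easily repaired, overstatement: you assert local Lipschitz continuity of $F=-\nabla R$, which is not established in the paper and does not follow in general (gradients of convex functions need not be locally Lipschitz); but plain continuity of $F$ --- available by dominated convergence, as in the paper's treatment of $\varepsilon_{i+1}^{(3)}(n)$ --- suffices for your second bracket, since all you need is $\sup\{\|F(\theta)-F(\vartheta)\|:\|\theta-\vartheta\|\le CT+\eta\}\to0$ as $T,\eta\to0$.
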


\begin{proof}
Please see the Appendix.
\end{proof}

The following result establishes that the recursive estimates generated from Algorithm \ref{Algm3} can converge to $\theta^*$ almost surely.

\begin{theorem}\label{Thm3}
Assume A1'), A2), and A3)
hold. Then for $\left\{\theta_k\right\}_{k \geq 1}$ generated by (\ref{Za})--(\ref{Zb}),  it holds that
\begin{align} \label{a.s.}
\theta_k \underset{k \rightarrow \infty}{\longrightarrow} \theta^{*}~~\text { a.s. }
\end{align}
\end{theorem}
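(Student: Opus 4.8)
The plan is to derive Theorem~\ref{Thm3} from the abstract convergence result for stochastic approximation with expanding truncations, Theorem~\ref{ThmA1}, by furnishing its two hypotheses for the present recursion: a noise condition along convergent subsequences and a Lyapunov function for the mean field $F(\cdot)$ in \eqref{19'}. The noise condition is precisely Lemma~\ref{Lem4}, and the absence of truncation along any convergent subsequence is Lemma~\ref{Le2}; together with the step size $a_k=1/k$ (so $\sum_k a_k=\infty$) and the expanding bounds $M_k\to\infty$, these verify the dynamical inputs of Theorem~\ref{ThmA1}. Hence the remaining work is to construct the Lyapunov function and to identify the equilibrium set of $F$.

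For the Lyapunov function I would take $V(\theta)=R(\theta)-R(\theta^{*})$. Proposition~\ref{Prop1} gives $\nabla R(\theta)=-F(\theta)$, so that
\begin{align*}
\nabla V(\theta)^{T}F(\theta)=\nabla R(\theta)^{T}F(\theta)=-\|F(\theta)\|^{2}\le 0,
\end{align*}
with equality if and only if $F(\theta)=0$. By Proposition~\ref{Prop1} the unique zero of $F$ is $\theta^{*}$, so the equilibrium set is the singleton $J=\{\theta^{*}\}$ and $V(J)=\{0\}$ is nowhere dense, which is the regularity requirement of Theorem~\ref{ThmA1}. Continuity of $F$ (from the growth bound $|\varphi(t)|\le c(|t|^{l}+1)$ in A1') together with A2)--A3) and dominated convergence) then yields $\sup_{\delta\le\|\theta-\theta^{*}\|\le\Delta}\nabla V(\theta)^{T}F(\theta)<0$ on every compact annulus around $\theta^{*}$, the strict-descent condition needed for the theorem. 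Equivalently one may use $V(\theta)=\tfrac12\|\theta-\theta^{*}\|^{2}$, for which the convexity of $R$ from Theorem~\ref{con} and the uniqueness of its minimizer give $\nabla V(\theta)^{T}F(\theta)=-(\theta-\theta^{*})^{T}\nabla R(\theta)<0$ for $\theta\ne\theta^{*}$.

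With these ingredients, the argument proceeds as follows. By Theorem~\ref{ThmA1}, the boundedness of the truncation count together with the Lyapunov descent forces every convergent subsequence $\theta_{n_k}\to\bar\theta$ to have its limit in $J$; concretely, on the window $j=n_k,\dots,m(n_k,T)$ Lemma~\ref{Le2} guarantees no truncation, so the increment is a genuine Robbins--Monro step, and combining the noise estimate of Lemma~\ref{Lem4} with the strict descent of $V$ along the interpolated trajectory shows that $V(\theta_{n_k})$ cannot remain bounded away from its minimum unless $\bar\theta=\theta^{*}$. Since $J=\{\theta^{*}\}$ is a single point, all convergent subsequences share the limit $\theta^{*}$; the expanding-truncation mechanism then yields boundedness of $\{\theta_k\}$ and hence $\theta_k\to\theta^{*}$ almost surely.

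I expect the genuine difficulty to lie not in this final assembly but in the noise analysis behind Lemma~\ref{Lem4}: since $\varepsilon_{k+1}$ depends on the past estimate $\theta_k$ and $\{x_k\}$ is only $\phi$-mixing rather than independent, one must control the weighted partial sums $\sum_{i=n_k}^{m(n_k,t_k)}a_i\varepsilon_{i+1}$ uniformly over the random windows attached to convergent subsequences, exploiting the geometric mixing and moment bounds of A2)--A3). A secondary subtlety, absent from the classical ODE-based treatment, is that $\Phi$ need not be twice differentiable, so both the descent inequality and the noise bound must be obtained using only the growth estimates $|\varphi(t)|\le c(|t|^{l}+1)$ and $|\varphi^{(1)}(t)|\le c(|t|^{l}+1)$ from A1') rather than smoothness of the mean field.
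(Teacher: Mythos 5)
Your proposal is correct and follows essentially the same route as the paper: both invoke the SAAWET general convergence theorem (Theorem \ref{ThmA1}), verify the noise condition via Lemma \ref{Lem4} (with Lemma \ref{Le2} handling truncations), and take the Lyapunov function $V(\theta)=\mathbb{E}\Phi(y_{k+1}-\theta^{T}x_{k})$ (your constant shift by $-R(\theta^{*})$ is immaterial) with $\nabla V=-F$ and uniqueness of the zero supplied by Proposition \ref{Prop1} and Theorem \ref{con}. The only detail worth noting is that condition C2) also requires the reset-point inequality $V(0)<\inf_{\|\theta\|=c_{0}}V(\theta)$ for some $c_{0}>0$, which you leave implicit in the "nowhere dense" remark, though the paper itself only asserts this without proof.
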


\begin{proof}
By the general convergence theorem of SAAWET, i.e., Theorem \ref{ThmA1} given in the Appendix, for (\ref{a.s.}) it suffices to verify the noise condition on $\varepsilon_{k+1}$ and the stability condition on $F(\cdot)$, which are defined by (\ref{20'}) and (\ref{19'}), respectively.

By Lemma \ref{Lem4}, it follows that for $\varepsilon_{k+1}$ defined by (\ref{20'}),
\begin{align*}
\lim _{T \rightarrow 0} \limsup _{k \rightarrow \infty} \frac{1}{T}\Big\|\sum_{i=n_{k}}^{m\left(n_{k}, t_{k}\right)} a_{i} \varepsilon_{i+1}\Big\|=0,~~\forall t_{k} \in[0, T]~~\mathrm{a.s.}
\end{align*}

Define
$$
V(\theta)\triangleq \mathbb{E}\left(\Phi\left(y_{k+1}-\theta^{T} x_{k}\right)\right).
$$

By Proposition \ref{Prop1}, we have
\begin{align}
\nabla V(\theta )=-F(\theta)
\end{align}
and $\theta^{*}$ is the unique zero point of $F(\theta)=0$, and thus
\begin{align}\label{C2_1}
\sup_{\delta \le \left\|\theta-\theta^{*}\right\| \le \Delta} \nabla V(\theta)^{T} F(\theta)<0,~~\forall~0<\delta<\Delta.
\end{align}

By Theorem \ref{con}, $\theta^{*}$ is the unique minimizer of $V(\cdot)$. We can prove that there exists some $c_{0}>0$ such that
\begin{align}\label{C2_2}
V\left(0\right)<\inf_{\left\| \theta \right\|=c_{0} } V(\theta).
\end{align}

Based on the above analysis, it is seen that C1)--C4) given in the Appendix are satisfied for Algorithm \ref{Algm3}. Thus the almost sure convergence of $\left\{\theta_{k}\right\}_{k \ge 1}$ to $\theta^{*}$ follows.
\end{proof}

\subsection{The $L_1$ function $\Phi(t)=|t|$ and the Quantile function $\Phi_{\gamma}(x)$}

\begin{proposition}\label{Prop2}
Assume that A2) and A3) hold. Then the unknown parameter vector $\theta^*$ of the system (\ref{1}) is the unique zero point of $\mathbb{E}x_k\mathrm{sgn}\left(y_{k+1}-\theta^{T} x_{k}\right)=0$.
\end{proposition}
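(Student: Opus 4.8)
The plan is to substitute the system dynamics $y_{k+1}=\theta^{*T}x_k+w_{k+1}$ directly into the map and study
\begin{align*}
g(\theta)\triangleq\mathbb{E}\bigl[x_k\operatorname{sgn}(y_{k+1}-\theta^Tx_k)\bigr]
=\mathbb{E}\bigl[x_k\operatorname{sgn}(w_{k+1}+\widetilde\theta^Tx_k)\bigr],\qquad \widetilde\theta\triangleq\theta^*-\theta.
\end{align*}
First I would verify that $\theta^*$ is a zero. At $\theta=\theta^*$ one has $\widetilde\theta=0$, so $g(\theta^*)=\mathbb{E}[x_k\operatorname{sgn}(w_{k+1})]$; since $w_{k+1}$ is independent of $x_k$ by A3), this factorizes as $\mathbb{E}[x_k]\,\mathbb{E}[\operatorname{sgn}(w_{k+1})]$, which vanishes because $\mathbb{E}x_k=0$ by A2) (and, redundantly, because the evenness of $f_w$ forces $\mathbb{E}\operatorname{sgn}(w_{k+1})=0$). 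This settles existence of the zero.

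For uniqueness, fix $\theta\neq\theta^*$ so that $\widetilde\theta\neq0$. Conditioning on $x_k$ and using that $w_{k+1}$ is independent of $x_k$ with even density $f_w$, I would compute
\begin{align*}
\mathbb{E}\bigl[\operatorname{sgn}(w_{k+1}+s)\mid x_k\bigr]\Big|_{s=\widetilde\theta^Tx_k}=h(\widetilde\theta^Tx_k),\qquad h(s)\triangleq 2F_w(s)-1,
\end{align*}
where $F_w$ denotes the cdf of $w_{k+1}$. The evenness of $f_w$ makes $h$ odd, $F_w$ is nondecreasing, and one checks $s\,h(s)\ge 0$ for all $s$, with $s\,h(s)>0$ whenever $s\neq 0$ and $F_w(s)\neq\tfrac12$. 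The decisive step is to pair $g(\theta)$ with $\widetilde\theta$:
\begin{align*}
\widetilde\theta^Tg(\theta)=\mathbb{E}\bigl[(\widetilde\theta^Tx_k)\,h(\widetilde\theta^Tx_k)\bigr]\ge 0,
\end{align*}
so it suffices to prove this expectation is \emph{strictly} positive, which immediately forces $g(\theta)\neq 0$.

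To obtain strict positivity I would combine the two density assumptions. By A3), $f_w$ is positive and continuous at the origin, so there is $\eta>0$ with $f_w>0$ on $(-\eta,\eta)$; hence $F_w(s)\neq\tfrac12$, i.e. $h(s)\neq 0$, for every $s\in(-\eta,\eta)\setminus\{0\}$. By A2), $q$ is positive on the ball $B_r(0)$ and $\widetilde\theta\neq 0$, so the slab $\{x\in B_r(0):0<\widetilde\theta^Tx<\eta\}$ is a nonempty open set of positive Lebesgue measure (it contains a neighborhood of $x=\varepsilon\,\widetilde\theta/\|\widetilde\theta\|$ for small $\varepsilon$), whence $\mathbb{P}\{0<\widetilde\theta^Tx_k<\eta\}>0$. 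On this event $s\,h(s)>0$, so $\mathbb{E}[s\,h(s)]>0$ and therefore $\widetilde\theta^Tg(\theta)>0$. The main obstacle is exactly this last estimate: one must carefully marry the positivity of $q$ near the origin (A2) with the positivity of $f_w$ near the origin (A3) to exhibit a set of positive probability on which $s\,h(s)$ is strictly positive, while disposing of the measure-zero technicalities — that $\mathbb{P}\{w_{k+1}=c\}=0$ for each constant $c$ (so the convention $\operatorname{sgn}(0)=1$ is immaterial in the expectations) and that the hyperplane $\{\widetilde\theta^Tx=0\}$ is $q$-negligible. This inner-product device has the added benefit of yielding uniqueness directly and consistently with Theorem \ref{con} applied to $\Phi(t)=|t|$, without having to establish the differentiability of $R(\cdot)$ that is unavailable for the nonsmooth $L_1$ cost.
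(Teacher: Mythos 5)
Your proof is correct, but it takes a genuinely different route from the paper's. The paper argues indirectly: it notes (as in Proposition \ref{Prop1}) that $\nabla\mathbb{E}|y_{k+1}-\theta^{T}x_{k}| = -\mathbb{E}\left[x_{k}\operatorname{sgn}(y_{k+1}-\theta^{T}x_{k})\right]$, invokes Theorem \ref{con} to conclude that $\theta^{*}$ is the unique minimizer of the convex function $\mathbb{E}|y_{k+1}-\theta^{T}x_{k}|$, and then uses the fact that for a differentiable convex function the zeros of the gradient are exactly the minimizers. Your proof is direct and self-contained: you compute the conditional expectation $h(s)=2F_{w}(s)-1$ explicitly and establish the strict monotonicity-type inequality $(\theta^{*}-\theta)^{T}g(\theta)>0$ for $\theta\neq\theta^{*}$ by combining the positivity of $f_{w}$ near the origin (A3) with the positivity of $q$ on $B_{r}(0)$ (A2) --- essentially the same ``positivity on a slab'' device the paper deploys inside the proof of Theorem \ref{con}, but applied to the map $g$ itself rather than to the risk $R$. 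As for what each approach buys: the paper's route is shorter given its prior results, but its appeal to ``similar to the proof of Proposition \ref{Prop1}'' glosses over the fact that Proposition \ref{Prop1} is proved under A1') with a \emph{continuous} derivative $\varphi$, which $\operatorname{sgn}$ is not, so the mean-value/dominated-convergence step there needs minor reworking for the $L_1$ case; your route avoids differentiation under the integral sign altogether, needs neither Theorem \ref{con} nor Proposition \ref{Prop1}, and as a bonus the inequality $(\theta^{*}-\theta)^{T}g(\theta)>0$ is exactly the kind of stability condition (cf. C2) in the Appendix) used later in the SAAWET convergence analysis of Theorem \ref{Thm4}, with the quadratic Lyapunov function $V(\theta)=\tfrac{1}{2}\|\theta-\theta^{*}\|^{2}$. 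One small correction to an aside in your writeup: differentiability of $R(\cdot)$ is \emph{not} unavailable for the nonsmooth $L_1$ cost --- the expectation against the noise density smooths the kink, and the paper's claim that $\nabla\mathbb{E}|y_{k+1}-\theta^{T}x_{k}|$ exists is correct; your proof simply has no need of this fact.
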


\begin{proof}
Similar to the proof of Proposition \ref{Prop1}, we can prove that $\nabla\mathbb{E}|y_{k+1}-\theta^{T} x_{k}|=-\mathbb{E}x_k\mathrm{sgn}\left(y_{k+1}-\theta^{T} x_{k}\right)$. By Theorem 1, $\theta^*$ is the unique minimizer of the convex function $\mathbb{E}|y_{k+1}-\theta^{T} x_{k}|$. Thus it holds that $\theta^*$ is the unique zero point of $\mathbb{E}x_k\mathrm{sgn}\left(y_{k+1}-\theta^{T} x_{k}\right)=0$.
\end{proof}

For $\Phi(t)=|t|$, a recursive algorithm parallel to Algorithm \ref{Algm3} is given as follows.

\begin{algorithm}\label{Algm4}
\SetAlgoNoLine 	
\BlankLine	
Select an arbitrary initial value $\theta_{1}$, \\
Choose $\left\{a_{k}=\frac{1}{k}\right\}_{k \geq 1}$ and $\left\{M_{k}\right\}_{k \geq 1}$ a sequence of positive numbers strictly diverging to infinity,\\
For $k\geq1$, recursively compute the estimates
\begin{subequations}
\begin{align}
\theta_{k+1}=&\left[\theta_{k}+a_{k}x_{k} \mathrm{sgn}\left(y_{k+1}-\theta_{k}^{T} x_{k}\right) \right]\nonumber\\
&\cdot\mathbb{I}_{\left[\left\|\theta_{k}+a_{k} x_{k} \mathrm{sgn}\left(y_{k+1}-\theta_{k}^{T} x_{k}\right) \right\| \le M_{\sigma_{k}}\right]}, \label{Aa} \\
\sigma_{k+1}=&\sum_{i=1}^{k}\mathbb{I}_{ \left[\left\|\theta_{i}+a_{i}x_{i} \mathrm{sgn}\left(y_{i+1}-\theta_{i}^{T} x_{i}\right)  \right\|>M_{\sigma_{i}}\right]},~~\sigma_{1}=1.\label{Ab}
\end{align}
\end{subequations}
\caption{Recursive Algorithm for $L_1$ Function $\Phi(t)=|t|$}
\end{algorithm}

Similar to Lemmas \ref{Le1} and \ref{Le2}, the following results hold for Algorithm \ref{Algm4}.

\begin{lemma}\label{le4}
Assume that A2) and A3) hold. Let $\left\{\theta_{n_k}\right\}_{k \geq 1}$ be any convergent subsequence of $\left\{\theta_k\right\}_{k \geq 1}$ generated from (\ref{Aa})--(\ref{Ab}), i.e., $\theta_{n_k} \underset{k \rightarrow \infty}{\longrightarrow} \bar{\theta}$. Then, for any fixed $T>0$ small enough, it holds that for all $k$ sufficiently large
\begin{align}\label{absi}
\left\|\theta_{j+1}-\theta_{n_k}\right\| \le CT,~~j=n_k, \ldots, m\left(n_k, T\right),
\end{align}
where $C>0$ is a constant not depending on $k$ or $T$.
\end{lemma}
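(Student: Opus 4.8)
The plan is to adapt the proof of Lemma~\ref{Le2} to the $L_1$ setting, where the key simplification is that $\mathrm{sgn}(\cdot)$ is uniformly bounded by $1$, so the increment $\|a_k x_k \mathrm{sgn}(y_{k+1}-\theta_k^T x_k)\| \le a_k \|x_k\|$ regardless of the current estimate. This bound is the $L_1$ analogue of the growth estimate $|\varphi(t)| \le c(|t|^l+1)$ used in Lemma~\ref{Le2}, but it is cleaner because it does not depend on $\theta_k$ or $y_{k+1}$. First I would fix a convergent subsequence $\theta_{n_k} \to \bar\theta$ and argue, via contradiction exactly as in Lemma~\ref{Le2}, that no truncation occurs on the block $j=n_k,\ldots,m(n_k,T)$ for all large $k$ once $T$ is small: since $\bar\theta$ lies strictly inside some ball of radius $M_{\sigma}$ with $M_\sigma \to \infty$, and since the accumulated increment over the block is controlled, the running iterate cannot reach the truncation bound.

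The central estimate is to bound $\|\theta_{j+1}-\theta_{n_k}\|$ by telescoping the (untruncated) recursion. Writing $\theta_{j+1}-\theta_{n_k} = \sum_{i=n_k}^{j} a_i x_i \mathrm{sgn}(y_{i+1}-\theta_i^T x_i)$ on the no-truncation block, the triangle inequality together with $|\mathrm{sgn}(\cdot)| \le 1$ gives
\begin{align*}
\left\|\theta_{j+1}-\theta_{n_k}\right\| \le \sum_{i=n_k}^{m(n_k,T)} a_i \|x_i\|.
\end{align*}
By the definition of $m(n_k,T)$ we have $\sum_{i=n_k}^{m(n_k,T)} a_i \le T$, so it remains to replace the weighted sum of $\|x_i\|$ by $CT$. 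I would do this by invoking the strong law established in Lemma~\ref{Nlim0}, specifically \eqref{xN}, which yields $\frac{1}{N}\sum_{k=1}^N \|x_k\| \to \mathbb{E}\|x_k\|$ a.s. A standard Abel summation / Kronecker-type argument then converts the averaged convergence of $\|x_i\|$ into the bound $\sum_{i=n_k}^{m(n_k,T)} a_i \|x_i\| \le (\mathbb{E}\|x_1\| + 1)\, T = CT$ for all large $k$, giving \eqref{absi} with $C$ not depending on $k$ or $T$.

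The main obstacle is making the last step rigorous and uniform in $k$: although $\sum a_i \le T$ controls the total stepsize, the factor $\|x_i\|$ is random and unbounded, so one must pass from the almost-sure averaged convergence $\frac{1}{N}\sum\|x_k\| \to \mathbb{E}\|x_k\|$ to a bound on the stepsize-weighted partial sum over the specific window $[n_k, m(n_k,T)]$. The cleanest route is to note that $a_i = 1/i$ matches the empirical-average weighting, so $\sum_{i=n_k}^{m(n_k,T)} \frac{1}{i}\|x_i\|$ is essentially $\big(\frac{1}{m}\sum_{i=1}^{m}\|x_i\|\big)$ times the stepsize mass over the block, up to lower-order corrections handled by summation by parts; since the empirical averages are uniformly close to $\mathbb{E}\|x_1\|$ for all large indices, the constant $C$ can be chosen independent of both $k$ and $T$. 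Because this mirrors the argument already carried out in Lemma~\ref{Le2} (with the uniform bound on $\mathrm{sgn}$ replacing the $\varphi$-growth estimate), I would simply reference that structure and defer the detailed summation estimate, as the paper does, to the Appendix.
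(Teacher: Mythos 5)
Your proposal is correct and follows essentially the same route as the paper, which proves this lemma by adapting the argument of Lemma~\ref{Le2}: telescope the untruncated recursion, use the uniform bound $|\mathrm{sgn}(\cdot)|\le 1$ so that $\|\theta_{j+1}-\theta_{n_k}\|\le\sum_{i=n_k}^{m(n_k,T)}a_i\|x_i\|$, split this sum into the deviation part (which is $o(1)$ by the mixing law of large numbers) and the mean part (which is $O(T)$ since $\sum_{i=n_k}^{m(n_k,T)}a_i\le T$), and run the no-truncation induction over the block. You also correctly identify the key simplification relative to Lemma~\ref{Le2}: because the increment bound $a_i\|x_i\|$ no longer depends on $\theta_i$ or $y_{i+1}$, the delicate induction with the specific truncation bounds $M_k=k^{1/(1+2l)}$ is unnecessary, which is precisely why Algorithm~\ref{Algm4} can take $\{M_k\}$ to be an arbitrary sequence diverging to infinity.
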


\begin{proof}
The proof can be similarly obtained as Lemma \ref{Le2}.
\end{proof}

\begin{lemma}\label{le5}
Assume that A2), and A3) hold. Then
\begin{align}
\sum_{k=1}^{\infty} \frac{1}{k} &\left(\left\|x_{k}\right\| \mathbb{I}_{[\left| y_{k+1}-z^{T}x_{k}\right| \le \left\|x_{k}\right\| (CT+ x )]}\right.\nonumber\\
&-\left.\mathbb{E}\left\|x_{k}\right\| \mathbb{I}_{[\left| y_{k+1}-z^{T}x_{i}\right| \le \left\|x_{k}\right\| (CT+ x )]}\right)<\infty~~\text{a.s.} \label{le5_1} \\	
\sum_{k=1}^{\infty} \frac{1}{k}&[x_{ k} \operatorname{sgn}(y_{ k+1}-\theta^{T}x_{ k})] \nonumber\\
&-\mathbb{E}[x_{k} \operatorname{sgn}(y_{ k+1}-\theta^{T}x_{k})]<\infty~~\text { a.s. } \label{le5_2}		
\end{align}	
for any fixed $C>0,~T>0,~x>0,~z\in \mathbb{R}^d$ and $\theta\in \mathbb{R}^d$.
\end{lemma}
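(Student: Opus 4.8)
The plan is to reduce both \eqref{le5_1} and \eqref{le5_2} to the convergence of a weighted sum of a centered, bounded functional of the $\phi$-mixing process, and then to reuse the machinery behind Lemma \ref{Le1}. Fix the constants $C>0$, $T>0$, $x>0$, $z\in\mathbb{R}^d$, $\theta\in\mathbb{R}^d$ and define
\begin{align*}
\zeta_k:=\|x_k\|\,\mathbb{I}_{[|y_{k+1}-z^Tx_k|\le\|x_k\|(CT+x)]},\qquad \eta_k:=x_k\,\mathrm{sgn}(y_{k+1}-\theta^Tx_k).
\end{align*}
Substituting $y_{k+1}=\theta^{*T}x_k+w_{k+1}$ shows that $\zeta_k$ and every component of $\eta_k$ is a measurable function of the pair $(x_k,w_{k+1})$ alone. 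Both are dominated by $\|x_k\|$, since $0\le\zeta_k\le\|x_k\|$ and $\|\eta_k\|=\|x_k\|$; hence, using A2) and $2\le 2(l+2)+\epsilon_0$, their second moments are finite and time-invariant, $\mathbb{E}\zeta_k^2\le\mathbb{E}\|x_k\|^2<\infty$ and similarly for $\eta_k$. Writing $\tilde\zeta_k:=\zeta_k-\mathbb{E}\zeta_k$ and $\tilde\eta_k:=\eta_k-\mathbb{E}\eta_k$, the two claims become $\sum_k\frac1k\tilde\zeta_k<\infty$ and $\sum_k\frac1k\tilde\eta_k<\infty$ almost surely.

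Next I would verify that $\{\zeta_k\}$ and $\{\eta_k\}$ inherit the geometric $\phi$-mixing of $\{x_k\}$. Because $\{w_{k+1}\}$ is i.i.d. and independent of the regressors (A3)), the noise variables entering the functionals at distinct times are mutually independent and independent of the whole $x$-process; consequently the cross-time dependence of $\{\zeta_k\}$ (and of $\{\eta_k\}$) is controlled entirely by that of $\{x_k\}$, so their mixing coefficients still satisfy $\phi_k\le\mu'\rho^k$ up to a one-step index shift. The $L^2$ covariance inequality for $\phi$-mixing sequences then yields, for $i<j$,
\begin{align*}
|\mathrm{Cov}(\tilde\zeta_i,\tilde\zeta_j)|\le 2\,\phi_{j-i}^{1/2}\,\|\tilde\zeta_i\|_2\|\tilde\zeta_j\|_2\le C'\rho^{(j-i)/2},
\end{align*}
and the analogous bound for $\tilde\eta$.

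With these bounds the argument closes exactly as in the proof of Lemma \ref{Le1}. The diagonal part $\sum_k\frac1{k^2}\mathbb{E}\tilde\zeta_k^2$ converges because the second moments are uniformly bounded, while the off-diagonal part is controlled by $\sum_{i<j}\frac1{ij}\rho^{(j-i)/2}$, which is finite since summing the geometric factor over the gap $j-i$ leaves a summable series in the smaller index. Thus the partial sums of $\sum_k\frac1k\tilde\zeta_k$ are $L^2$-bounded with tail variances tending to zero, and the same convergence theorem for weighted sums of $\phi$-mixing sequences that underlies \eqref{e3}--\eqref{e4} gives $\sum_k\frac1k\tilde\zeta_k<\infty$ a.s.; the identical reasoning applies componentwise to $\sum_k\frac1k\tilde\eta_k$, which proves \eqref{le5_1} and \eqref{le5_2}.

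I expect the main obstacle to be the second step, namely rigorously transferring the $\phi$-mixing property from $\{x_k\}$ to the non-smooth functionals $\{\zeta_k\}$ and $\{\eta_k\}$, which couple $x_k$ with the future noise $w_{k+1}$ through an indicator and a sign. Once this structural point is handled, the uniform boundedness of the second moments makes the remaining estimates strictly easier than in Lemma \ref{Le1}, where higher-order moments of $x_k$ and $w_{k+1}$ had to be invoked; here only $\mathbb{E}\|x_k\|^2<\infty$ is needed.
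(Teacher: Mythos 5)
Your proposal is correct and follows essentially the same route as the paper: the paper proves Lemma \ref{le5} by repeating the argument of Lemmas \ref{Le1} and \ref{Nlim0}, i.e., by viewing each summand as a centered functional of the jointly mixing pair $(x_k,w_{k+1})$, dominated by $\|x_k\|$, and invoking the a.s.\ convergence result for weighted sums of $\phi$-mixing sequences (Lemma \ref{stoas}). One minor remark: your intermediate covariance/$L^2$-boundedness computation is redundant and does not by itself yield a.s.\ convergence---the correct (and sufficient) step is to check the $(2+\epsilon)$-moment and geometric mixing-rate hypotheses of Lemma \ref{stoas} directly, which here hold trivially because the functionals are bounded by $\|x_k\|$ and A2) supplies moments well beyond order $2+\epsilon$.
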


\begin{proof}
The proof can be similarly obtained as Lemma \ref{Le1}.
\end{proof}

The following result establishes the almost sure convergence of the recursive identification algorithm (\ref{Aa})--(\ref{Ab}).

\begin{theorem}\label{Thm4}
Assume that A2), and A3) hold. Then for $\{\theta_k\}_{k\geq1}$ generated from (\ref{Aa})--(\ref{Ab}), it holds that
\begin{align} \label{Thm4absa.s.}
\theta_k \underset{k \rightarrow \infty}{\longrightarrow} \theta^{*}~~\text { a.s. }
\end{align}
\end{theorem}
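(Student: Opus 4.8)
The plan is to mirror the architecture of the proof of Theorem~\ref{Thm3}, invoking the general convergence theorem for SAAWET (Theorem~\ref{ThmA1}) but now with the discontinuous nonlinearity $\varphi(t)=\operatorname{sgn}(t)$ in place of a continuous derivative. As in the differentiable case, it suffices to verify the two structural hypotheses of SAAWET: the noise condition on $\varepsilon_{k+1}\triangleq x_k\operatorname{sgn}(y_{k+1}-\theta_k^T x_k)-\mathbb{E}[x_k\operatorname{sgn}(y_{k+1}-\theta^T x_k)]|_{\theta=\theta_k}$ along the index sequence of any convergent subsequence, and the stability (Lyapunov) condition on the mean field $F(\theta)=\mathbb{E}[x_k\operatorname{sgn}(y_{k+1}-\theta^T x_k)]$. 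The Lyapunov side is essentially immediate from the material already established: take $V(\theta)=\mathbb{E}|y_{k+1}-\theta^T x_k|$, which by Proposition~\ref{Prop2} satisfies $\nabla V(\theta)=-F(\theta)$ with $\theta^*$ its unique zero, so that $\sup_{\delta\le\|\theta-\theta^*\|\le\Delta}\nabla V(\theta)^T F(\theta)<0$ for all $0<\delta<\Delta$, and by Theorem~\ref{con} (with $\Phi(t)=|t|$) $\theta^*$ is the unique minimizer of $V$, yielding the constant-$c_0$ separation condition $V(0)<\inf_{\|\theta\|=c_0}V(\theta)$. Thus conditions C1)--C4) on $F(\cdot)$ carry over verbatim, and the whole weight of the proof falls on the noise condition.

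The noise analysis is where the discontinuity of $\operatorname{sgn}(\cdot)$ makes this case genuinely different from Theorem~\ref{Thm3}, and I expect it to be the main obstacle. In the differentiable case the estimate $\|\varepsilon_{i+1}-\varepsilon'_{i+1}\|$ (where $\varepsilon'$ freezes $\theta_k$ at the subsequence limit $\bar\theta$) was controlled through the local Lipschitz bound $|\varphi^{(1)}|\le c(|t|^l+1)$; here $\operatorname{sgn}$ is not Lipschitz, so a small perturbation $\|\theta_j-\theta_{n_k}\|\le CT$ (guaranteed by Lemma~\ref{le4}) can flip the sign whenever the residual $y_{k+1}-\theta^T x_k$ lies in a thin slab about zero. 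The plan is therefore to bound the sign difference by the indicator of that slab: for $\|\theta-\theta_{n_k}\|\le CT$ one has $|\operatorname{sgn}(y_{k+1}-\theta^T x_k)-\operatorname{sgn}(y_{k+1}-\bar\theta^T x_k)|\le 2\,\mathbb{I}_{[|y_{k+1}-\bar\theta^T x_k|\le\|x_k\|(CT+x)]}$ for a suitable radius, precisely the event whose empirical-versus-expected discrepancy is handled by \eqref{le5_1} in Lemma~\ref{le5}. This converts the non-Lipschitz obstruction into a measure-of-a-slab estimate, and the continuity/positivity of $f_w$ at the origin (A3) ensures $\mathbb{E}\|x_k\|\mathbb{I}_{[|y_{k+1}-\bar\theta^T x_k|\le\|x_k\|(CT+x)]}=O(T)$ as $T\to0$, delivering the required $\frac1T\to0$ behaviour.

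Concretely, I would decompose $\sum_{i=n_k}^{m(n_k,t_k)}a_i\varepsilon_{i+1}$ into three pieces: (i) the frozen-parameter martingale-type sum $\sum a_i[x_i\operatorname{sgn}(y_{i+1}-\bar\theta^T x_i)-\mathbb{E}(\cdot)]$, whose tail vanishes by the convergence of the series \eqref{le5_2} together with Kronecker's lemma (exactly as in Lemma~\ref{Lem4}); (ii) the expectation-of-the-difference term $\sum a_i\,\mathbb{E}[x_i(\operatorname{sgn}(y_{i+1}-\theta_i^T x_i)-\operatorname{sgn}(y_{i+1}-\bar\theta^T x_i))]$, bounded in norm by $2\sum a_i\,\mathbb{E}\|x_i\|\mathbb{I}_{[\text{slab}]}$, which after dividing by $T$ and sending $T\to0$ tends to $0$ by the slab-measure estimate above; and (iii) the centered difference term, handled by the law-of-large-numbers series \eqref{le5_1}. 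Summing, taking $\frac1T\limsup_k$ and then $T\to0$ gives \eqref{noise} for the sign nonlinearity, which is the missing ingredient. Having verified both the noise condition and C1)--C4), Theorem~\ref{ThmA1} then yields $\theta_k\to\theta^*$ a.s., completing the proof. The one subtlety to treat carefully is that the slab radius must simultaneously absorb the truncation-free displacement bound $CT$ from Lemma~\ref{le4} and the fixed margin in \eqref{le5_1}; aligning these constants so that the same event controls both the expected and the centered pieces is the detail on which the argument hinges.
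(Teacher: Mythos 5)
Your proposal is correct and follows essentially the same route as the paper: the paper's own proof of Theorem \ref{Thm4} is precisely a sketch that reuses the SAAWET theorem (Theorem \ref{ThmA1}), the Lyapunov function $V(\theta)=\mathbb{E}|y_{k+1}-\theta^{T}x_{k}|$ via Proposition \ref{Prop2} and Theorem \ref{con}, and defers the noise verification to the analogue of Lemma \ref{Lem4}, with the slab-indicator series \eqref{le5_1} of Lemma \ref{le5} playing exactly the role you assign it. The one slight imprecision is your claim that the slab expectation is $O(T)$ (that would require $f_w$ bounded); what is actually needed, and what holds by dominated convergence since $w_{k+1}$ has a density independent of $x_k$, is only that this expectation vanishes as the slab width $CT+x$ shrinks to zero, which suffices because the prefactor $\frac{1}{T}\sum_{i=n_k}^{m(n_k,T)}a_i$ is bounded by $1$.
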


\begin{proof}
The proof can be similarly obtained as Theorem \ref{Thm3}. Here we only list the sketch.

The algorithm (\ref{Aa}) can be rewritten as follows
\begin{align}
\theta_{k+1}=&\left[\theta_{k}+a_{k}\left(F\left(\theta_{k}\right)+\varepsilon_{k+1}\right)\right]\nonumber\\
&\cdot\mathbb{I}_{\left[\left\|\theta_{k}+a_{k}\left(F\left(\theta_{k}\right)+\varepsilon_{k+1}\right)\right\| \leq M \sigma_{k}\right]},
\end{align}		
where
\begin{align}
& F\left(\theta_{k}\right) =\left.\mathbb{E}\left[x_{k} \mathrm{sgn}\left(y_{k+1}-\theta^{T} x_{k}\right)\right]\right|_{\theta=\theta_{k}}, \\
& \varepsilon_{k+1}\! \!=\!x_{k} \varphi\!\left(y_{k+1}\!\!-\!\theta_{k}^{T} x_{k}\!\right)\!-\!\mathbb{E}\!\left[x_{k} \varphi\!\left(y_{k+1}\!-\!\theta^{T} x_{k}\right)\!\right]\!\!\Big|_{\theta=\theta_{k}}.\label{absnoise}
\end{align}	

Define
\begin{align}
V(\theta)\triangleq \mathbb{E}|y_{k+1}-\theta^{T} x_{k}|.
\end{align}

Noting that
\begin{align}
\nabla V(\theta )=-\mathbb{E}\left[x_{k} \mathrm{sgn}\left(y_{k+1}-\theta^{T} x_{k}\right)\right],
\end{align}
similar to (\ref{C2_1}) and (\ref{C2_2}), we can prove
\begin{align}
\sup_{\delta \le \left\|\theta-\theta^{*}\right\| \le \Delta} \nabla V(\theta)^{T} F(\theta)<0,~~\forall~0<\delta<\Delta,
\end{align}
and
\begin{align}
V\left(0\right)<\inf_{\left\| \theta \right\|=c_{0} } V(\theta),~~\text{for~some}~c_{0}>0.
\end{align}

Next we consider the noise $\varepsilon_{k+1}$ defined by (\ref{absnoise}). Denote by $\left\{\theta_{n_{k}}\right\}_{k\geq1}$ a convergent subsequence of $\left\{\theta_{k}\right\}_{k\geq1}$ and by $\bar{\theta}$ the limit of $\left\{\theta_{n_{k}}\right\}$, i.e., $\theta_{n_{k}} \underset{k \to \infty}{\longrightarrow} \bar{\theta}$.

Similar to the proof of Lemma \ref{Lem4}, we can verify that
\begin{align}\label{absnoise5}
 \lim _{T \rightarrow 0} \limsup _{k \rightarrow \infty} \frac{1}{T}\Big|\sum_{i=n_{k}}^{m\left(n_{k}, T\right)} a_{i} \varepsilon_{i+1}\Big|=0~~\mathrm{a.s.}
\end{align}

Then by Theorem \ref{ThmA1} given in the Appendix, the conclusion of {\color{black}(\ref{Thm4absa.s.})} follows.
\end{proof}

\begin{algorithm}\label{Algm5}
\SetAlgoNoLine 	
\BlankLine	
Select an arbitrary initial value $\theta_{1}$, \\
Choose $\left\{a_{k}=\frac{1}{k}\right\}_{k \geq 1}$ and $\left\{M_{k}\right\}_{k \geq 1}$ a sequence of positive numbers strictly diverging to infinity,\\
For $k\geq1$ and $0<\gamma<1$, recursively compute the estimates
\begin{subequations}
\begin{align}
\theta_{k+1}\!=\!&\left[\theta_{k}\!+\!a_{k}O_{k+1} \right]\cdot\mathbb{I}_{[\|\theta_{k}+a_{k}O_{k+1} \| \le M_{\sigma_{k}}]} \label{quantilea}, \\
\sigma_{k+1}\!=\!&\sum_{i=1}^{k}\mathbb{I}_{ \left[\left\|\theta_{i}+a_{i}O_{i+1}  \right\|>M_{\sigma_{i}}\right]},
\sigma_{1}=1\label{quantileb}.
\end{align}
\end{subequations}
where $O_{i+1}=\gamma x_i\mathbb{I}_{[|y_{i+1}-\theta^{T}_i x_{i}|\geq 0]}+(\gamma-1) x_i\mathbb{I}_{[|y_{i+1}-\theta^{T}_i x_{i}|<0]}$.
\caption{Recursive Algorithm for Quantile Function $\Phi_{\gamma}(x)$}
\end{algorithm}
Similar to Proposition \ref{Prop2} and Theorem \ref{Thm4}, for Algorithm \ref{Algm5} the following results hold.

\begin{proposition}\label{Prop3}
Assume that A2) and A3) hold. Then with respect to the variable $\theta$, $\theta=\theta^*$ is the unique zero of
$$\mathbb{E}(\gamma x_k\mathbb{I}_{|y_{k+1}-\theta^{T} x_{k}|\geq 0}+(\gamma-1) x_k\mathbb{I}_{|y_{k+1}-\theta^{T} x_{k}|<0})=0.$$
\end{proposition}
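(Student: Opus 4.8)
The plan is to follow the template of Proposition \ref{Prop2}: identify the map in the statement as $-\nabla R(\theta)$ for the expectation risk $R(\theta)=\mathbb{E}\Phi_\gamma(y_{k+1}-\theta^{T}x_{k})$, and then appeal to Theorem \ref{con} to pin down its unique zero as the unique minimizer $\theta^{*}$. First I would check that the Quantile function satisfies A1) with $l=1$: it is nonnegative and convex (piecewise linear with slopes $\gamma-1<0$ and $\gamma>0$), obeys $\Phi_\gamma(t)>\Phi_\gamma(0)=0$ for every $t\neq0$ since $0<\gamma<1$, and satisfies $|\Phi_\gamma(t)|\le|t|$. Consequently Theorem \ref{con} applies and guarantees that $R(\theta)$ is convex and that $\theta^{*}$ is its unique minimizer.

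The key step, and the part that differs from Proposition \ref{Prop1}, is to compute $\nabla R(\theta)$ even though $\Phi_\gamma$ has a kink at the origin. Writing $y_{k+1}-\theta^{T}x_{k}=w_{k+1}+(\theta^{*}-\theta)^{T}x_{k}$ and using the independence of $w_{k+1}$ and $x_{k}$ from A3), I would introduce the scalar function $g(u)\triangleq\mathbb{E}\Phi_\gamma(w_{k+1}+u)$ and show it is continuously differentiable with $g'(u)=\gamma-F_{w}(-u)$, where $F_{w}$ is the distribution function of $w_{k+1}$. This is exactly where the noise density smooths out the nonsmoothness: for each fixed $u$ the integrand $w\mapsto\Phi_\gamma(w+u)$ fails to be differentiable only at the single point $w=-u$, which carries zero $f_{w}$-mass, and the subgradient $\varphi_\gamma(\cdot)=\gamma\mathbb{I}_{[\cdot\ge0]}+(\gamma-1)\mathbb{I}_{[\cdot<0]}$ is bounded by $1$, so differentiation under the integral is justified by dominated convergence; continuity of $g'$ follows since $w_{k+1}$ has a density. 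Writing $R(\theta)=\mathbb{E}\,g((\theta^{*}-\theta)^{T}x_{k})$ and differentiating once more under the outer expectation — legitimate because $|g'|\le1$ and $\mathbb{E}\|x_{k}\|<\infty$ by A2) — yields
\begin{align*}
\nabla R(\theta)=-\mathbb{E}\left[\gamma x_{k}\mathbb{I}_{[y_{k+1}-\theta^{T}x_{k}\ge0]}+(\gamma-1)x_{k}\mathbb{I}_{[y_{k+1}-\theta^{T}x_{k}<0]}\right].
\end{align*}

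Finally, since $R$ is convex and differentiable, its minimizers are exactly the zeros of $\nabla R$; hence the map in the statement vanishes at $\theta$ if and only if $\theta$ minimizes $R$. By Theorem \ref{con} this minimizer set is the singleton $\{\theta^{*}\}$, which establishes that $\theta^{*}$ is the unique zero. I expect the main obstacle to be the differentiability argument at the kink, namely verifying rigorously that the expectation over the noise removes the nonsmoothness of $\Phi_\gamma$ and that both interchanges of derivative and expectation are valid; the boundedness of $\varphi_\gamma$ makes the domination step immediate, so once this is in place the remainder is routine.
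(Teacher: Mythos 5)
Your proof is correct and takes essentially the same route as the paper: the paper establishes Proposition \ref{Prop3} by analogy with Propositions \ref{Prop1} and \ref{Prop2}, i.e., by identifying the map in the statement as $-\nabla R(\theta)$ for the convex risk $R(\theta)=\mathbb{E}\,\Phi_{\gamma}(y_{k+1}-\theta^{T}x_{k})$ and then invoking Theorem \ref{con}'s unique-minimizer conclusion, which is exactly your argument. Your explicit smoothing computation $g'(u)=\gamma-F_{w}(-u)$ (together with your implicit correction of the indicator events from $|y_{k+1}-\theta^{T}x_{k}|\geq 0$ to $y_{k+1}-\theta^{T}x_{k}\geq 0$, clearly a typo in the statement) simply fills in the details that the paper leaves as ``similar to Proposition \ref{Prop2} and Theorem \ref{Thm4}.''
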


\begin{theorem}\label{Thm5}
Assume that A2) and A3) hold. Then for $\{\theta_k\}_{k\geq1}$ generated from (\ref{quantilea})--(\ref{quantileb}), it holds that
\begin{align} \label{absa.s.}
\theta_k \underset{k \rightarrow \infty}{\longrightarrow} \theta^{*}~~\text { a.s. }
\end{align}
\end{theorem}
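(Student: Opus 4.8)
The plan is to follow the SAAWET route used for Theorem \ref{Thm4}, reading \eqref{quantilea}--\eqref{quantileb} as a stochastic approximation with expanding truncations. Its driving field is the time-invariant function $F(\theta)\triangleq\mathbb{E}\big[\gamma x_k\mathbb{I}_{[y_{k+1}-\theta^T x_k\ge 0]}+(\gamma-1)x_k\mathbb{I}_{[y_{k+1}-\theta^T x_k<0]}\big]$, built from the subgradient of $\Phi_\gamma$, which equals $\gamma$ or $\gamma-1$ according to the sign of the residual $y_{k+1}-\theta^T x_k$; the observation noise is $\varepsilon_{k+1}\triangleq O_{k+1}-F(\theta_k)$. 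First I would rewrite the recursion as $\theta_{k+1}=[\theta_k+a_k(F(\theta_k)+\varepsilon_{k+1})]\cdot\mathbb{I}_{[\|\theta_k+a_k(F(\theta_k)+\varepsilon_{k+1})\|\le M_{\sigma_k}]}$ and invoke Theorem \ref{ThmA1}: it then suffices to build a Lyapunov function for $F$ verifying conditions C1)--C4) and to check the noise condition along the index sequence of every convergent subsequence.

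For the Lyapunov function I would take $V(\theta)\triangleq\mathbb{E}\Phi_\gamma(y_{k+1}-\theta^T x_k)$. A subgradient computation as in Propositions \ref{Prop1} and \ref{Prop2} gives $\nabla V(\theta)=-F(\theta)$, whence $\nabla V(\theta)^T F(\theta)=-\|F(\theta)\|^2$; combined with Proposition \ref{Prop3}, which identifies $\theta^*$ as the unique zero of $F$, this yields the stability estimate $\sup_{\delta\le\|\theta-\theta^*\|\le\Delta}\nabla V(\theta)^T F(\theta)<0$ for all $0<\delta<\Delta$. The coercivity estimate $V(0)<\inf_{\|\theta\|=c_0}V(\theta)$ for a suitable $c_0>0$ follows from Theorem \ref{con} exactly as \eqref{C2_1}--\eqref{C2_2} were obtained in the proof of Theorem \ref{Thm3}. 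Lemma \ref{le4} then guarantees that no truncation occurs along any convergent subsequence.

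The decisive step is the noise condition $\lim_{T\to0}\limsup_{k\to\infty}\frac1T\|\sum_{i=n_k}^{m(n_k,t_k)}a_i\varepsilon_{i+1}\|=0$ for all $t_k\in[0,T]$, complicated by the discontinuity of the indicator at $y_{k+1}=\theta_k^T x_k$ and by the dependence of $\varepsilon_{k+1}$ on the past estimate $\theta_k$. Here I would use the identity $\gamma\mathbb{I}_{[u\ge0]}+(\gamma-1)\mathbb{I}_{[u<0]}=\tfrac12\mathrm{sgn}(u)+(\gamma-\tfrac12)$, valid under the convention $\mathrm{sgn}(0)=1$, which splits $O_{k+1}=\tfrac12 x_k\mathrm{sgn}(y_{k+1}-\theta_k^T x_k)+(\gamma-\tfrac12)x_k$ and hence $\varepsilon_{k+1}=\tfrac12\varepsilon_{k+1}^{\mathrm{sgn}}+(\gamma-\tfrac12)x_k$, where $\varepsilon_{k+1}^{\mathrm{sgn}}$ is exactly the noise already controlled in Theorem \ref{Thm4} through Lemmas \ref{le4} and \ref{le5}. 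The sign-part contribution is thus handled verbatim; for the linear part it suffices, since $\mathbb{E}x_k=0$ by A2), that the weighted tail $\sum_{i=n_k}^{m(n_k,t_k)}\tfrac1i x_i$ vanishes, which follows from the almost sure convergence of $\sum_k\tfrac1k x_k$ --- the constant-weight analogue of \eqref{le5_2} obtained from the same mixing law of large numbers. With C1)--C4) and the noise condition verified, \eqref{absa.s.} follows from Theorem \ref{ThmA1}.

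I expect the noise condition to be the main obstacle, precisely because $\varepsilon_{k+1}$ inherits the discontinuity of the indicator and depends on $\theta_k$, so that a naive continuity argument fails. The value of the decomposition above is that it reduces this obstacle to two already-settled pieces --- the $\mathrm{sgn}$-term borrowed directly from the $L_1$ analysis, and a mean-zero linear drift annihilated by $\mathbb{E}x_k=0$ --- so that no genuinely new estimate beyond Lemmas \ref{le4}--\ref{le5} is needed.
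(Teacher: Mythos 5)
Your proposal is correct, and it takes a genuinely different route from the paper's own proof. The paper proves Theorem \ref{Thm5} by rewriting \eqref{quantilea}--\eqref{quantileb} as the $L_1$ recursion of Theorem \ref{Thm4} driven by \emph{random, state-dependent} step sizes $\overline{a}_k=a_k\gamma$ or $a_k(1-\gamma)$ according to the sign of the residual $y_{k+1}-\theta_k^Tx_k$, observes that $0<\gamma<1$ gives $\overline{a}_k\to0$ and $\sum_k\overline{a}_k=\infty$, and then declares the rest similar to Theorem \ref{Thm4}. You instead use the identity $\gamma\mathbb{I}_{[u\ge0]}+(\gamma-1)\mathbb{I}_{[u<0]}=\tfrac12\mathrm{sgn}(u)+(\gamma-\tfrac12)$ (valid under the paper's convention $\mathrm{sgn}(0)=1$), which keeps the step sizes deterministic and moves the quantile asymmetry into an additive term $(\gamma-\tfrac12)x_k$. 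This buys something real: condition C1) of Theorem \ref{ThmA1} is stated for deterministic $a_k$, and in the paper's reparametrization the random weights $\overline{a}_i$ are entangled with the noise inside the sums of C3), so the GCT does not apply verbatim and tacitly requires a pathwise re-derivation or a random-step-size variant of SAAWET; your version stays exactly inside the stated framework, with field $F(\theta)=\tfrac12\mathbb{E}\left[x_k\mathrm{sgn}(y_{k+1}-\theta^Tx_k)\right]$ (using $\mathbb{E}x_k=0$ from A2)), the sign part of the noise disposed of by the Theorem \ref{Thm4} machinery, and the extra drift $(\gamma-\tfrac12)\sum_i a_i x_i$ killed by the a.s.\ convergence of $\sum_k\frac1k x_k$, which indeed follows from Lemma \ref{stoas}, $\mathbb{E}x_k=0$, and the geometric mixing rate in A2). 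Two cosmetic caveats: Lemmas \ref{le4}--\ref{le5} are stated for the iterates of \eqref{Aa}--\eqref{Ab}, so you should invoke their \emph{analogues} for the quantile iterates (the proofs carry over unchanged because $\|O_{k+1}\|\le\|x_k\\|$ gives the same bounds); and the stability estimate $\sup_{\delta\le\|\theta-\theta^*\|\le\Delta}\nabla V(\theta)^TF(\theta)<0$ needs continuity of $F$ on the compact annulus in addition to Proposition \ref{Prop3}'s uniqueness of the zero, which is the same (dominated-convergence) gap the paper itself leaves implicit in Theorems \ref{Thm3} and \ref{Thm4}.
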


\begin{proof}
Here we only list the key points. The algorithm (\ref{quantilea})--(\ref{quantileb}) can be rewritten as follows
\begin{align}
\theta_{k+1}=&\left[\theta_{k}+\overline{a}_{k}x_{k} \mathrm{sgn}\left(y_{k+1}-\theta_{k}^{T} x_{k}\right) \right]\nonumber\\
&\cdot\mathbb{I}_{\left[\left\|\theta_{k}+\overline{a}_{k} x_{k} \mathrm{sgn}\left(y_{k+1}-\theta_{k}^{T} x_{k}\right) \right\| \le M_{\sigma_{k}}\right]}, \label{47'} \\	\sigma_{k+1}=&\sum_{i=1}^{k}\mathbb{I}_{ \left[\left\|\theta_{i}+\overline{a}_{i}x_{i} \mathrm{sgn}\left(y_{i+1}-\theta_{i}^{T} x_{i}\right)  \right\|>M_{\sigma_{i}}\right]},~~\sigma_{1}=1,\label{48'}
\end{align}
where
\begin{align}
\overline{a}_{k}=
\begin{cases}
a_k\cdot \gamma,~~\mathrm{if}~y_{k+1}-\theta_{k}^{T} x_{k}\geq0,\\
a_k\cdot (1-\gamma),~~\mathrm{if}~y_{k+1}-\theta_{k}^{T} x_{k}<0.
\end{cases}
\end{align}

Noting that $0<\gamma<1$, for $\overline{a}_k$ it follows that $\overline{a}_k\to0$ as $k\to\infty$ and $\sum_{k=1}^{\infty}\overline{a}_k=\infty$. The proof can be similarly obtained as Theorem \ref{Thm4}.
\end{proof}

\begin{remark}
Algorithms \ref{Algm3}--\ref{Algm5} establishes the recursive identification algorithms for the general convex function $\Phi(\cdot)$ including $\Phi(t)=|t|^l,~l\geq1$ and the Quantile function $\Phi_{\gamma}(t)$, etc. These algorithms can be easily calculated in an online manner, since to obtain the new estimates, we only need to update the current estimates with the new observations.
\end{remark}

\begin{remark}\label{Remark11}
Focusing on the ARX model,  \cite{ljung1983theory} also introduced a recursive algorithm for the general criterion function $R(\theta)=\mathbb{E}(\Phi(y_{k+1}-\theta^Tx_k))$, which can be formulated as follows:
\begin{align}\label{RPEM}
\theta_{k+1}^{RPEM}=\theta_k^{RPEM}+a_kR_k^{-1}x_k\varphi(y_{k+1}-\theta_k^{T}x_k),    
\end{align}
where $R_k\in \mathbb {R}^{d\times d}$ is a positive definite matrix that modifies the gradient search direction to perhaps a suitable one, while the other terms in (\ref{RPEM}) align with ones in Algorithm \ref{Algm3}--\ref{Algm5}. Clearly, in terms of computation complexity, Algorithms \ref{Algm3}-5 are much cheaper than the recursive algorithm associated with \eqref{RPEM}, because 
of the matrix inversion of $R_k$ except for being given by the properly scaled identity matrix. What's more, \cite{ljung1983theory} also studied the  convergence of the recursive algorithm associated with \eqref{RPEM} but under a different set of assumptions as detailed below:
\begin{itemize}
\item \cite{ljung1983theory} describes the model in the form of transfer function, where the model predictor is assumed to be stable, and in contrast, the linear regression model \eqref{1} is considered here.

\item For the theoretical analysis of \eqref{RPEM}, the data generation $\{y_k,u_k\}$ should be asymptotically mean stationary and exponentially stable. In contrast, we require the regressor being mixing sequences.
\item In \cite{ljung1983theory}, $\Phi(\cdot)$ is assumed to be twice differentiable and the derivative is bounded by a linear function, while the second-order derivative is assumed to be bounded. In contrast, our requirements for $\Phi(\cdot)$ is listed in A1').
\end{itemize}

The theoretical analysis of the RPE method is linked to the idea of ODE method, where the convergence of the recursion is tied to the stability of the associated differential equation. To assure a bounded sequence, the algorithm (\ref{RPEM}) includes a projection to a compact set %with {\em{prior}} knowledge 
in $\mathbb{R}^d$, which is not needed in Algorithm \ref{Algm3}-\ref{Algm5}. %Notice that \eqref{RPEM} with $R_k$ being a properly scaled identity matrix coincides with RM algorithm. This demonstrates the advantage of SAAWET in a certain sense. It is also of interest considering (\ref{RPEM}) with expanding truncation for the general search direction.
\end{remark}

\section{Numerical Examples}\label{sec4}

In this section, we testify the performance of the algorithms through numerical examples. The convex functions in the general criteria are chosen as the $L_{l},l\geq1$ function, the Huber function, the Log-cosh function, and the Quantile function. In the first example, with a fixed sample size $N$ we testify the performance of Algorithm \ref{Algm1} as well as its robustness against the outliers in the observations. In the second example, we testify the performance of the recursive estimates generated from Algorithms \ref{Algm3}, \ref{Algm4}, and \ref{Algm5}. We also compare the proposed algorithms with the regularized least squares (RegLS) algorithms with the SS, DC, TC kernels \cite{chen2018kernel}\cite{chen2011kernel}.

{\em Example 1)} Consider the following linear system:
\begin{align}
A\left(q^{-1}\right) y_k=B\left(q^{-1}\right) u_k+w_k
\end{align}
where $q^{-1}$ is the backward-shift operator, $A\left(q^{-1}\right)=1-1.5q^{-1}+0.7q^{-2}$, and $B\left(q^{-1}\right)= q^{-1}+0.5q^{-2}$. Set $\theta ^{*}=[-1.5,0.7,1,0.5]^{T}$ and $x_k=[-y_{k-1}~-y_{k-2}~u_{k-1}~u_{k-2}]^T$. Choose the input $\{u_k\}_{k\geq1}$ an i.i.d. sequence with a uniform distribution on $[-0.5,0.5]$ and assume the noise $\{w_k\}_{k\geq1} $ an i.i.d. sequence with a Gaussian distribution $\mathcal{N}(0,0.1)$. With a fixed length of data set, we use the set of observations $\left\{x_{k}, y_{k+1}\right\}_{k=1}^{N=2000}$ for identification, and the set $\left\{x_{k}, y_{k+1}\right\}_{k=2001}^{2200}$ for validation of the identification results. The identification error is characterized by
\begin{align}\nonumber
\mathrm{Error} = \Big\|\widehat\theta_{N}-\theta^{*}\Big\|.
\end{align}
We adopt the Matlab toolbox CVX to numerically solve the estimates from Algorithm \ref{Algm1}. As a comparison, we also compute the RegLS estimates by the Matlab toolbox arxRegul.

{\color{black}For the Huber function and the Quantile function, we choose $\delta=1$ and $\gamma=0.4$, denoted by $\mathrm{Huber}_1$ and $\mathrm{Q}_{0.4}$, respectively.} We perform 100 Monte Carlo simulations. In Figure \ref{errorbox}, the box plot shows the identification errors from 100 simulations. From Figure \ref{errorbox}, we can see that there exists some convex criteria perform comparable with the RegLS methods with the SS, DC, TC kernels.
%In Figure \ref{pred}, the blue lines show the true system outputs while the red lines show the predicted outputs $\widehat{y}_{k}={\color{red}\widehat\theta_N^Tx_k},~k=2001,\cdots,2200$ from one sample path of 100 simulations. 

\begin{figure}[hbpt]
\centering
\includegraphics[width=0.7\linewidth]{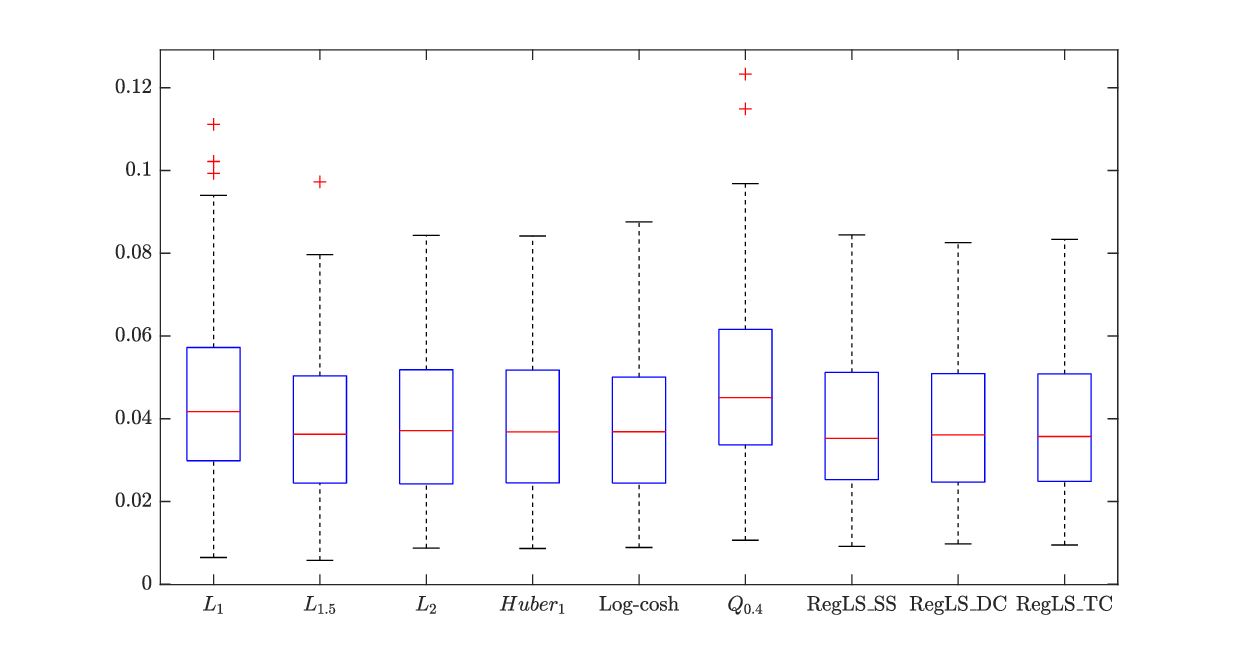}%{cvx_1_MC}
		\caption{Box plot of identification errors}
		\label{errorbox}
\end{figure}
% \begin{figure}[hbpt]
% \centering
% \includegraphics[width=12cm]{CVXMC3kernlpredicts}%{cvx_1_predicts}
% \caption{Predicted and actual outputs}
% \label{pred}
% \end{figure}

We further testify the robustness of Algorithm \ref{Algm1}.
{\color{black}For the data set $\left\{x_{k}, y_{k+1}\right\}_{k=1}^{N=2000}$, we set the fraction of outliers as $1\%$. This is achieved by replacing the observations $(x_k,y_{k+1})$ at $k=100,200,...,2000$ with the following outlier $(X_{\mathrm{otl}},y_{\mathrm{otl}}),~X_{\mathrm{otl}}\in \mathbb{R}^4,~y_{\mathrm{otl}}\in \mathbb{R}^1$, }
%We add the following outlier $(X_{\mathrm{otl}},y_{\mathrm{otl}}),~X_{\mathrm{otl}}\in \mathbb{R}^4,~y_{\mathrm{otl}}\in \mathbb{R}^1$ into the observations $(\varphi_k,y_k)$ at time instances $k=100,~200,\cdots,~2000$, {\color{blue}{(QQ??)}} where
\begin{align*}
X_{\mathrm{otl}}=\frac{1}{\alpha \cdot n} X^{T} y,~~
y_{\mathrm{otl}}=\beta,
\end{align*}
where $X=[x_{1},x_{2},\cdots,x_{2000}]^T,~y=[y_{1},y_{2},\cdots,y_{2000}]^{T}$, $n=20$, $\beta=\frac{1}{2000}\sum_{k=1}^{2000} y_{k}$, and $\alpha=15$ (see, e.g., \cite{diakonikolas2019sever}).
 \begin{figure}[hbpt]
 \centering	\includegraphics[width=0.7\linewidth]{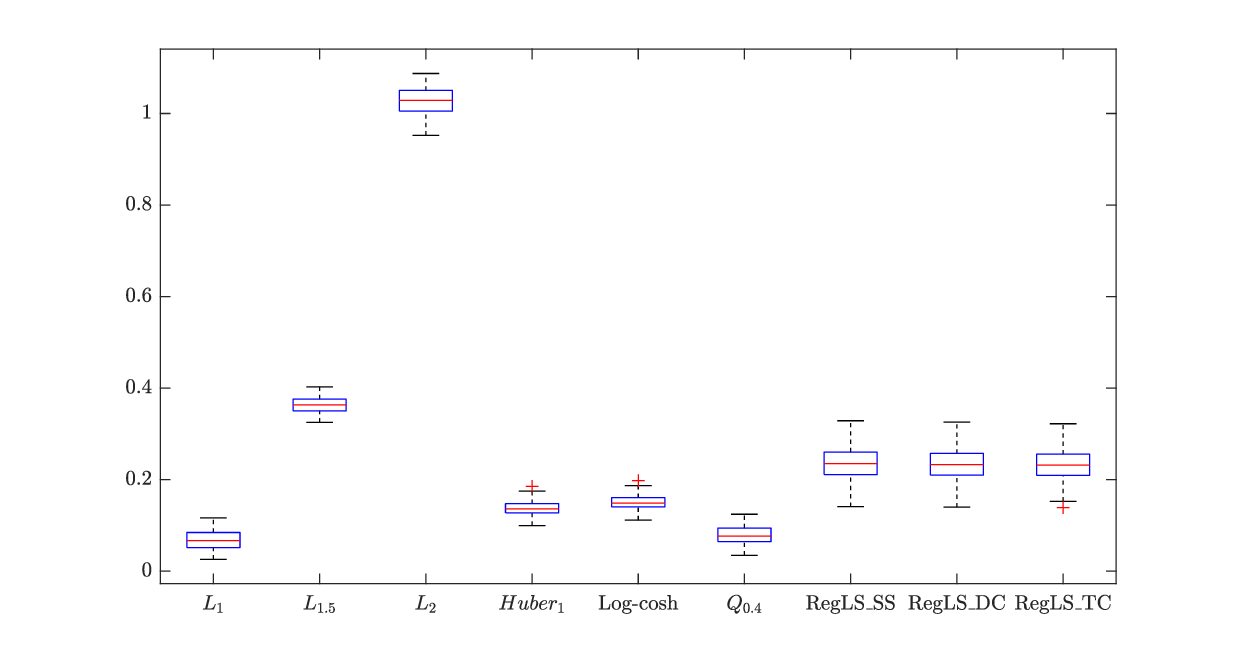}	
   \caption{Box plot of identification errors with outliers}
 		\label{errorout}
 \end{figure}
% \begin{figure}[hbpt]
% \centering	\includegraphics[width=12cm,height=6cm]{cvx_outl_MC}
% 		\caption{Box plot of identification errors with outliers}
% 		\label{errorout}
% \end{figure}
% \begin{figure}[hbpt]
%     \centering
% \includegraphics[width=12cm]{cvx_1_mc_outlier_pre1}
%     \caption{Predicted and actual outputs with outliers}
%     \label{predout}
% \end{figure}
%\begin{figure}[hbpt]
%\centering
	%\begin{minipage}[H]{0.5\linewidth}
%		\centering
%		\includegraphics[width=12cm]{cvx_outl_predicts}
%		\caption{Predicted and actual outputs with outliers}
%\label{predout}
	%\end{minipage}
%\end{figure}	

We also perform 100 Monte Carlo simulations. In Figure \ref{errorout}, the box plot shows the identification errors from 100 simulations. From Figure \ref{errorout}, we can see that there exists some convex criteria perform better than the RegLS methods.
%In Figure \ref{predout}, the blue lines show the true system outputs while the red lines show the predicted outputs $\widehat{y}_{k}={\color{red}\widehat\theta_N^Tx_k},~k=2001,\cdots,2200$ from one sample path of 100 simulations. 

{\em Example 2)} We still consider the linear system in Example 1). Let $\{u_k\}_{k\geq1}$ and $\{w_k\}_{k\geq1}$ be two sequences of mutually independent and i.i.d. random variables with distributions $u_k \in \mathcal{N}(0,1)$ and $w_k \in \mathcal{N}(0,0.1)$. We testify the performance of the recursive estimates generated from Algorithms \ref{Algm3}--\ref{Algm5} and set the maximal number of recursion steps as $10000$. We perform 100 Monte Carlo simulations and compare the recursive estimates with the RegLS methods with the SS, DC, TC kernels. %{\color{red}Notice that when the new data becomes available, the regularized methods utilise all collected data to compute the hyperparameters, followed by optimizing a regularization problem. Here we use the Matlab toolbox arxRegulOptions for calculating the regularization parameter and the kernel matrix for different choice of kernel design. For fair comparison, the time for computing the hyperparameters is omitted.}

\begin{figure}[hbpt!]
\centering
\includegraphics[width=0.7\linewidth]{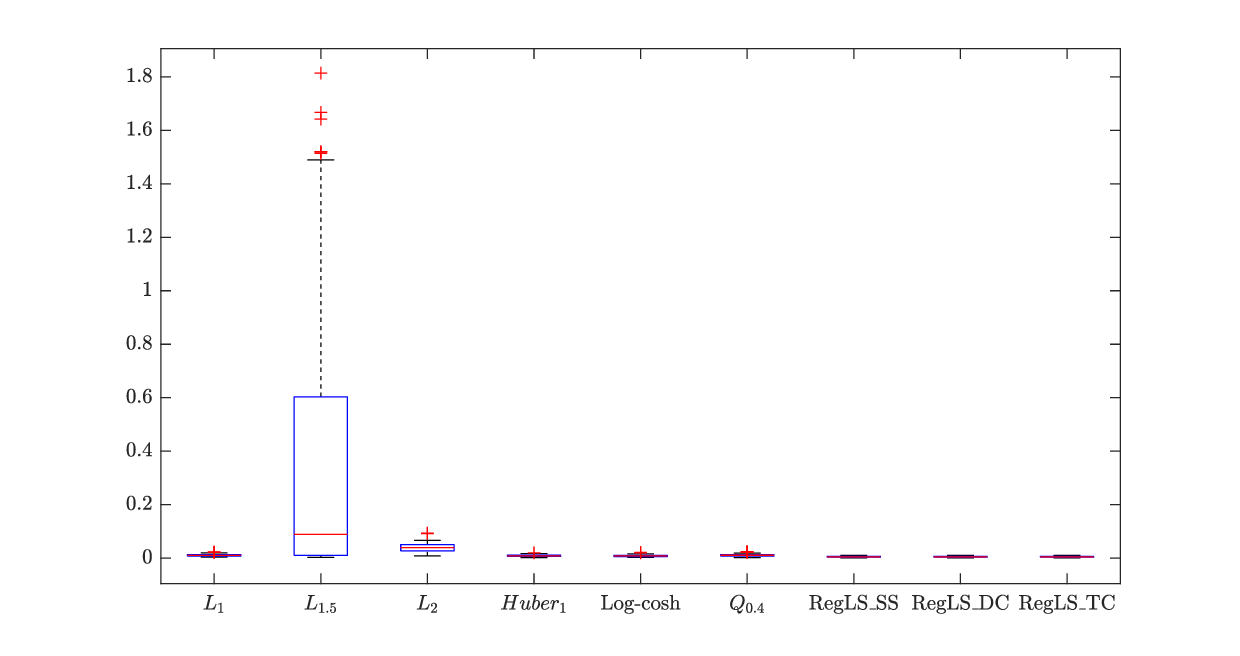}%{SAMC}
\caption{Box plot of identification errors with maximum number of recursion 10000}
\label{SAbox}
\end{figure}
\begin{figure}[hbpt!]
\centering
\includegraphics[width=0.7\linewidth]{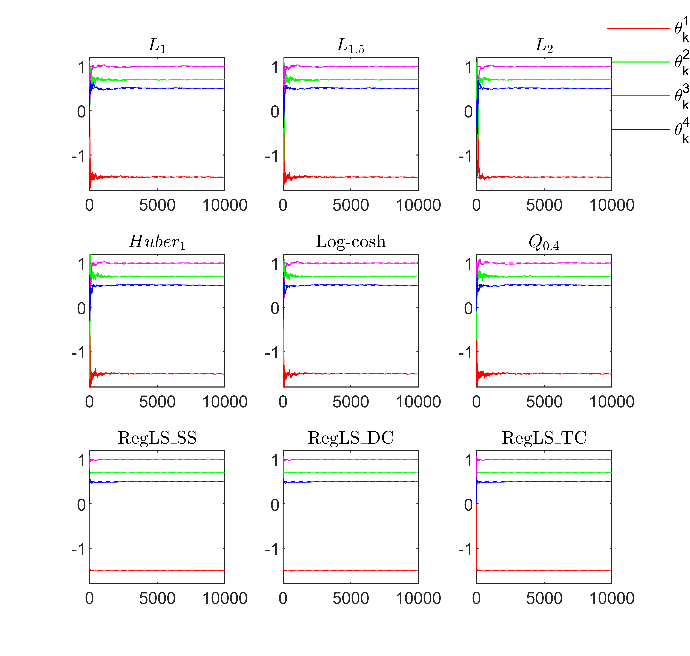}
\caption{{\color{black} Recursive identification sequences $\theta_k^{i*}$ for $\theta^{i*},i=1,2,3,4$,~$k=1,\cdots,10000$}}
\label{SAchase}
\end{figure}
\begin{table}
\setlength\tabcolsep{3pt}
\centering
\caption{Estimated values of unknown parameters at $k=10000$ and accumulated computation time from $k=1,\cdots,10000$ }
\begin{tabular}{cccccc}		
\toprule
& $\theta_{1}^{*}$ & $\theta_{2}^{*}$ & $\theta_{3}^{*}$ & $\theta_{4}^{*}$ & time (s)\\
\midrule
true value& -1.5 & 0.7 & 1 & 0.5\\

$L_{1}$ function & -1.5070 & 0.7035& 0.9946&0.4979&0.0203 \\
$L_{1.5}$ function & -1.5095 & 0.7023  &0.9944 &0.4973 &0.0293 \\
$L_{2}$ function  & -1.5062   &0.7007 &0.9947 &0.4991 &0.0299 \\
{\color{black}Huber function, $\delta=1$}	& -1.5079    &0.7011 & 0.9948 & 0.4977 &0.0520 \\
Log-cosh function & -1.5075   & 0.7010 & 0.9947 & 0.4982 &0.0225 \\
Quantile function, $\lambda=0.4$   & -1.5063   & 0.7052 & 0.9925 & 0.4984 &0.0207 \\
RegLS with SS kernel & -1.5014   & 0.7010 & 0.9937 & 0.4988 &902.4207 \\
RegLS with DC kernel & -1.5015   & 0.7010 & 0.9937 & 0.4988 &760.2699 \\	
RegLS with TC kernel& -1.5015   & 0.7010 & 0.9937 & 0.4988 &822.4108 \\	
\bottomrule	
\end{tabular}
\label{tab}
\end{table}

Figure \ref{SAbox} shows the box plot of the identification errors of 100 simulations. Figure \ref{SAchase} shows the identification sequences generated from one of the simulations, where the dashed lines denote the true values of parameters {\color{black}$\theta^{*}=[\theta^{1*},~\theta^{2*},~\theta^{3*},~\theta^{4*}]^T$} and the solid lines denote the estimates, %From Figure \ref{SAchase} we find that the simulation results are consistent with the theoretical analysis. In such one of the simulations, 
and the values of estimates at the final step $10000$ and the accumulated computation time from $k=1,\cdots,10000$ are summarized in Table \ref{tab}, which shows that for the online identification, the recursive algorithms proposed in the paper are more efficient than the classical RegLS methods.

\section{Concluding Remarks}\label{sec5}

In this paper, a general convex optimization-based criterion is introduced for estimating the unknown parameter vector $\theta^{*}$ of a linear stochastic system and the recursive algorithms are also given. The criterion function includes the $L_l,~l\ge1$, the Huber function, the Log-cosh function, and the Quantile function as special cases. The problems that are interesting and deserved for future research include the selection of the specific convex function in the criterion for the robustness of identification, the convergence rate and the asymptotic distribution of the proposed algorithms, the recursive version of the RegLS criterion, etc.%regularized general identification criterion, etc.%in Algorithm \ref{Algm2}

\appendices
\renewcommand\thelemma{A\arabic{lemma}}	
\renewcommand\thedefinition{A\arabic{definition}}	
\def\thesectiondis{\thesection}          %\def\thesubsectiondis{\thesection.\arabic{subsection}.}          \def\thesubsubsectiondis{\thesubsection.\arabic{subsubsection}.}
\renewcommand{\thesection}{\Alph{section}}

\section{General Convergence Theorem of SAAWET and Technical Lemmas}	

Consider the case where the root $x^{0}$ of $f(\cdot): \mathbb{R}^{m} \rightarrow \mathbb{R}^{m}$ is a singleton, i.e., $f(x)=0$ if and only if $x=x^{0}$. Let $\left\{M_{k}\right\}_{k \geq 1}$ be a sequence of positive numbers increasingly diverging to infinity and let the initial value $x_{0}$ be given. We have the noisy observations of $f(\cdot): y_{k+1}=f\left(x_{k}\right)+\varepsilon_{k+1}$, where $\left\{x_{k}\right\}_{k \geq 1}$ is given by the following SAAWET algorithm,
\begin{align}
x_{k+1}=&\left[x_{k}+a_{k} y_{k+1}\right] \mathbb{I}_{\left[\left\|x_{k}+a_{k} y_{k+1}\right\| \leq M_{\sigma_{k}}\right]}\nonumber\\
&+x^{*}\mathbb{I}_{\left[\left\|x_{k}+a_{k} y_{k+1}\right\|>M_{\sigma_{k}}\right]},\\
\sigma_{k}=&\sum_{i=1}^{k-1} \mathbb{I}_{\left[\left\|x_{i}+a_{i} y_{i+1}\right\|>M_{\sigma_{i}}\right]},~\sigma_{0}=0.
\end{align}

For the General Convergence Theorem (GCT) of SAAWET, the following conditions are needed.

\begin{sloppypar}
\begin{itemize}
\item[C1)] $a_{k}>0, a_{k} \mathop{\longrightarrow}\limits_{k \to \infty} 0$, and $\sum_{k=1}^{\infty} a_{k}=\infty$,
\item[C2)] There is a continuous differentiable function $v(\cdot): \mathbb{R}^{m} \rightarrow\mathbb{R}$ such that $\sup _{\delta \leq\left\|x-x^{0}\right\| \leq \Delta} \nabla v(x)^{T} f(x)<0,~\forall~\Delta>\delta>0$. Furthermore, $x^{*}$  is such that $v\left(x^{*}\right)<$ $\inf _{\|x\|=c_{0}} v(x)$ for some $c_{0}>0$ and $\left\|x^{*}\right\|<c_{0}$.
\item[C3)] For a fixed realization $\omega\in\Omega$,
\begin{align}
\lim _{T \rightarrow 0} \limsup _{k \rightarrow \infty} \frac{1}{T}\Big\|\!\!\!\!\sum_{i=n_{k}}^{m\left(n_{k}, T_{k}\right)}\!\!\!\! a_{i} \varepsilon_{i+1}\Big\|=0,~~ \forall T_{k} \in[0, T]
\end{align}	
for any $\left\{n_{k}\right\}_{k \geq 1}$ such that $\left\{x_{n_{k}}\right\}_{k \geq 1}$ converges, where
$
m(k, T)=\max \left\{m: \sum_{j=k}^{m} a_{j} \leq T\right\}.$
\item[C4)] $f(\cdot)$ is measurable and locally bounded.
\end{itemize}
\end{sloppypar}

\setcounter{theorem}{0}
\renewcommand\thetheorem{A\arabic{theorem}}

\begin{theorem}\label{ThmA1} (\cite{chen2014recursive})
Assume that C1), C2), and C4) hold. Then $x_{k} \mathop{\longrightarrow}\limits_{k \rightarrow \infty} x^{0}$ for those $\omega$ such that C3) holds.
\end{theorem}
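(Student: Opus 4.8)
The plan is to prove convergence in two stages that reflect the two distinctive features of SAAWET: first that the expanding–truncation mechanism halts after finitely many steps (so that $\{x_k\}$ is bounded on the realizations where C3) holds), and then that boundedness together with the Lyapunov inequality in C2) forces $x_k\to x^0$ by a trajectory–subsequence (TS) argument. Throughout, I fix a realization $\omega$ for which C3) holds and work pathwise, since the conclusion is asserted only for such $\omega$.

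For the boundedness stage I would first fix the constant $c_0$ and the reset point $x^*$ supplied by C2), set $c_1=\inf_{\|x\|=c_0}v(x)$, and pick a level $c_2\in(v(x^*),c_1)$; continuity of $v$ then makes the component of $\{x:v(x)\le c_2\}$ containing $x^*$ a compact set lying strictly inside $\{\|x\|<c_0\}$. The claim to establish is $\sup_k\sigma_k<\infty$. Assuming the contrary, truncations occur along an infinite sequence of epochs $\{\tau_j\}$, and at each epoch the recursion resets to $x^*$, so $v(x_{\tau_j+1})=v(x^*)<c_2$. Using C1) (vanishing steps), C4) (local boundedness of $f$), and C3) (negligibility of $\sum a_i\varepsilon_{i+1}$ over a window), I would derive a window estimate (analogous to Lemma \ref{Le2}, but obtained directly from C1), C3), C4)): over any interval of integration time $[0,T]$ the increment of $v$ is controlled by $\sum a_i\nabla v(x_i)^Tf(x_i)$ plus a noise term that C3) drives below any prescribed fraction of $T$. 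Since the drift term is nonpositive by C2) as long as the trajectory stays in $\{\delta\le\|x-x^0\|\le\Delta\}$, the value $v(x_k)$ cannot climb from below $c_2$ up to $c_1$ before it is pulled back, so the trajectory never reaches $\|x\|=c_0$—let alone the truncation sphere of radius $M_{\sigma_j}$, which diverges with $j$. This contradicts the occurrence of a further truncation at epoch $\tau_{j+1}$, proving that only finitely many truncations occur and hence $\limsup_k\|x_k\|<\infty$.

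For the convergence stage, boundedness makes every indicator in the recursion equal to one for all large $k$, so $x_{k+1}=x_k+a_k(f(x_k)+\varepsilon_{k+1})$ is an ordinary stochastic–approximation recursion. Given any convergent subsequence $x_{n_k}\to\bar x$, I would form the interpolated path on $[0,T]$ and use C1), C4), and C3) exactly as above to show it stays within $o(1)+O(T)$ of the solution of $\dot x=f(x)$ started at $\bar x$; integrating C2) along that solution gives $v(x_{m(n_k,T)+1})\le v(\bar x)-\beta T+o(1)$ with $\beta>0$ whenever $\bar x\ne x^0$. A separate, routine estimate using the nonpositive drift and the C3)-controlled noise shows that $\{v(x_k)\}$ converges to a limit $v^\star$; since $v$ is continuous and $\{x_k\}$ bounded, $v(\bar x)=v^\star$ for every limit point $\bar x$. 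The strict decrease $v(x_{m(n_k,T)+1})\le v^\star-\beta T/2<v^\star$ for large $k$ then contradicts $v(x_k)\to v^\star$ unless $\bar x=x^0$. Thus every limit point of the bounded sequence equals $x^0$, which is exactly $x_k\to x^0$.

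The main obstacle is the boundedness stage: unlike the classical Robbins–Monro setting, boundedness is not assumed but must be extracted from the truncation design, and the delicate point is to show, uniformly over the (hypothetically infinitely many) reset epochs, that a bounded Lyapunov budget permits only a bounded spatial excursion, so that the ever-growing bounds $M_{\sigma_j}$ can eventually never be hit. Making this quantitative requires combining the window estimate with the fact that C3) holds along the index set of every convergent subsequence, together with careful bookkeeping of the reset value $x^*$ relative to the sublevel sets of $v$; once this is in place, the convergence stage is a standard LaSalle-type argument using that C2) forces $\{x:\nabla v(x)^Tf(x)=0\}=\{x^0\}$.
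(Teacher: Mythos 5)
Your proposal must be measured against the proof in \cite{chen2014recursive}, because the paper itself does not prove Theorem~\ref{ThmA1}: it quotes the general convergence theorem of SAAWET from that reference and only verifies its hypotheses C1)--C4) for the specific algorithms. Against that benchmark, your two-stage architecture is the right one and matches the trajectory--subsequence proof: first show the number of truncations is finite (exploiting that the reset points $x_{\tau_j+1}\equiv x^{*}$ form a constant, hence convergent, subsequence along which C3) is available, and that $M_{\sigma_j}\to\infty$), then prove convergence of the truncation-free tail via window estimates and the Lyapunov condition. Read as a sketch, your Stage~1 is faithful to that argument, with two caveats you partially flag yourself: the conclusion that the trajectory ``never reaches $\|x\|=c_0$'' can only be asserted for all sufficiently large reset epochs, and the climb estimate requires extracting convergent subsequences at the band-crossing times (possible by compactness of the band) so that C3) can be invoked there.

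The genuine gap is in Stage~2, where you track ``the solution of $\dot x=f(x)$ started at $\bar x$'' and ``integrate C2) along that solution.'' This step fails under the stated hypotheses: C4) only makes $f$ measurable and locally bounded, so the ODE $\dot x=f(x)$ need not admit any solution through $\bar x$, and even when solutions exist, the $o(1)+O(T)$ tracking estimate needs continuity of $f$ to control the discretization error $f(x_i)-f(x(t_i))$. Avoiding the ODE is precisely the point of this theorem---the paper emphasizes that its analysis ``does not rely on the ODE approach'' and contrasts SAAWET with the RPE/ODE method in Remark~\ref{Remark11}. The repair, which is what \cite{chen2014recursive} actually does (and what the paper's Lemmas \ref{Le2} and \ref{Lem4} are designed to feed), is to expand $v$ once over the whole window by the mean value theorem: $v(x_{m(n_k,T)+1})-v(x_{n_k})=\nabla v(\zeta_k)^{T}\bigl(\sum_{i=n_k}^{m(n_k,T)}a_i f(x_i)+\sum_{i=n_k}^{m(n_k,T)}a_i\varepsilon_{i+1}\bigr)$ with $\zeta_k$ on the segment joining the endpoints. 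The noise part is $o(T)$ by C3) together with boundedness of $\nabla v(\zeta_k)$; the drift part splits as $\sum_i a_i\nabla v(x_i)^{T}f(x_i)+\sum_i a_i\bigl(\nabla v(\zeta_k)-\nabla v(x_i)\bigr)^{T}f(x_i)$, where the first sum is at most $-\beta T$ because the window estimate keeps every $x_i$ within $CT$ of $\bar x\neq x^{0}$, hence in a fixed annulus where C2) gives a strictly negative supremum, and the second sum is $T\cdot o(1)_{T\to 0}$ by uniform continuity of $\nabla v$ and local boundedness of $f$. No ODE solution is ever needed. A secondary soft spot: your ``routine estimate'' that $v(x_k)$ converges conceals the connectedness/crossing argument of the limit set, which is where most of the remaining work in \cite{chen2014recursive} lies; it is standard once the window estimates are in place, but it is not a one-line consequence of nonpositive drift.
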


%\begin{lemma}[Kronecker]\label{Kronecher}
%If $a_{k},b_{k}$ are sequences of real numbers with $0<b_{k}\uparrow \infty$,
%$\sum_{k=1}^{\infty } a_{k}$ converging, then
%$$\frac{1}{b_{N} } \sum_{k=1}^{N} a_{k}b_{k}\underset{N\to\infty}{\longrightarrow}0. $$
%\end{lemma}	

\begin{definition}(\cite{davydov1973mixing})
Suppose $\{z_k\}_{k \geq 1}$ is a random process on a given probability space $\left( \Omega,\mathcal{F},\mathbb{P}\right) $. Let $\mathscr{F}_{0}^{n} \triangleq \sigma\left\{z_{k}, 0 \leq k \leq n\right\}$ and  $\mathscr{F}_{n}^{\infty} \triangleq   \sigma\left\{z_{k}, k \geq n\right\}$  be the $\sigma$-algebras generated by $\left\{z_{k}, 0 \leq k \leq n\right\} $ and  $\left\{z_{k}, k \geq n\right\}$, respectively. The $\phi$-mixing coefficients of the process are defined as follows:		
\begin{align*}
\phi_k \!\triangleq\! \sup _{n} \!\!\sup _{A \in \mathscr{F}_{0}^{n}, \mathbb{P}(A)>0, B \in \mathscr{F}_{n+k}^{\infty}}\!\!\!\!\frac{|\mathbb{P}(A B)-\mathbb{P}(A) \mathbb{P}(B)|}{\mathbb{P}(A)},~~k\geq1.
\end{align*}
The process is said to be $\phi$-mixing if $\lim _{k \rightarrow \infty} \phi_k=0$.
\end{definition}

\begin{lemma}\label{stoas}(\cite{masry1987strong})
Assume that $\left\{x_{k}\right\}_{k \geq 0}$ with $ x_{k} \in \mathbb{R}^{m}$ is $\phi$-mixing with mixing coefficients $\{\phi_k\}_{k \geq 0}$. Let $\left\{\Phi_{k}(\cdot)\right\}_{k \geq 0}$ be a sequence of functions satisfying $\Phi_{k}(\cdot): \mathbb{R}^{m} \rightarrow \mathbb{R}$ and $ \mathbb{E} \Phi_{k}\left(x_{k}\right)=0$. If there exist some constants $\epsilon>0$ and $\gamma>0$ such that
$$
\sum_{k=1}^{\infty}\left(\mathbb{E}\left|\Phi_{k}\left(x_{k}\right)\right|^{2+\epsilon}\right)^{\frac{2}{2+\epsilon}}<\infty
$$
and
$$
\sum_{k=1}^{\infty} \log k(\log \log k)^{1+\gamma}(\phi_k)^{\frac{\epsilon}{2+\epsilon}}<\infty
$$
then
$$
\sum_{k=1}^{\infty} \Phi_{k}\left(x_{k}\right)<\infty \text { a.s. }
$$
\end{lemma}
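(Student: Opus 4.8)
The plan is to prove the almost sure convergence of the series by showing that its partial sums $S_n \triangleq \sum_{k=1}^{n}\Phi_k(x_k)$ form an almost surely Cauchy sequence. Write $\xi_k \triangleq \Phi_k(x_k)$, which is centered by hypothesis and, depending only on $x_k$, is both $\mathscr{F}_0^k$-measurable and $\mathscr{F}_k^{\infty}$-measurable; set $a_k \triangleq (\mathbb{E}|\Phi_k(x_k)|^{2+\epsilon})^{1/(2+\epsilon)}$, so that the first summability hypothesis reads $\sum_{k\ge 1} a_k^2 < \infty$. First I would record a $\phi$-mixing covariance inequality: since $\xi_i$ is $\mathscr{F}_0^i$-measurable and $\xi_j$ is $\mathscr{F}_{j}^{\infty}$-measurable, a Davydov/Ibragimov-type bound gives, for $i<j$, a covariance estimate of the form $|\mathbb{E}\xi_i\xi_j| \le C\,\phi_{j-i}^{\beta}\,a_i a_j$ with the exponent $\beta=\epsilon/(2+\epsilon)$ obtained by a Hölder trade-off between the available $(2+\epsilon)$-th moments and the mixing rate. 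This is precisely the exponent appearing in the second hypothesis, and it is the only place where the decay of $\phi_k$ enters.

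Second, I would convert this $L^2$ control of the increments into an almost sure maximal control. Combining the covariance bound with an AM--GM splitting of the off-diagonal double sum yields, for any block, $\mathbb{E}\bigl(S_{m+n}-S_m\bigr)^2 \le C\sum_{k=m+1}^{m+n} a_k^2$, so that the block variances are summable by the first hypothesis. To pass from summable variances to the almost sure convergence of the series I would invoke a Rademacher--Menshov type maximal moment inequality for the quasi-orthogonal sequence $\{\xi_k\}$, together with a dyadic blocking of the index set: one shows that the dyadic subsequence $\{S_{2^p}\}_{p\ge 1}$ converges almost surely and that $\max_{2^p< n\le 2^{p+1}}|S_n-S_{2^p}|\to 0$ almost surely, and Borel--Cantelli (after Chebyshev) then closes the Cauchy property. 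The weighted summability $\sum_k \log k\,(\log\log k)^{1+\gamma}\phi_k^{\epsilon/(2+\epsilon)}<\infty$ of the second hypothesis is exactly what makes the logarithmic cost of the maximal inequality summable over the dyadic blocks.

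The main obstacle, and the technical heart of the result, is the sharp maximal inequality. A naive Rademacher--Menshov argument produces a $(\log n)^2$ factor multiplying the variance sum $\sum a_k^2$; the work is instead to arrange the estimate so that the logarithmic factors are carried by the \emph{mixing} part of the bound rather than by $\sum a_k^2$. This is what permits the first hypothesis to be stated with no logarithmic weight while the second hypothesis carries the precise weight $\log k\,(\log\log k)^{1+\gamma}$, the refined $(\log\log k)^{1+\gamma}$ factor emerging from a careful summation of the block contributions. I would carry out this bookkeeping following the estimates in \cite{masry1987strong}, and it is the step I expect to require the most care.
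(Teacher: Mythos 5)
First, note that the paper itself does not prove this statement: Lemma \ref{stoas} is imported verbatim from \cite{masry1987strong} and used as an external tool, so there is no internal proof to compare yours against. Judged on its own terms, your outline assembles the right ingredients: the covariance bound $|\mathbb{E}\xi_i\xi_j|\le C\,\phi_{j-i}^{\epsilon/(2+\epsilon)}a_ia_j$ is indeed the Davydov-type inequality (applied with $p=q=2+\epsilon$, using that $\phi$-mixing dominates strong mixing), the block estimate $\mathbb{E}\bigl(S_{m+n}-S_m\bigr)^2\le C\sum_{k=m+1}^{m+n}a_k^2$ follows from it together with $\sum_h\phi_h^{\epsilon/(2+\epsilon)}<\infty$ (which the second hypothesis implies), and you correctly diagnose that a naive Rademacher--Menshov argument attaches a $(\log n)^2$ factor to $\sum_k a_k^2$ that the first hypothesis cannot absorb.

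However, as a proof the proposal has a genuine gap at exactly the step you yourself flag as the heart of the matter. From what you actually establish --- quasi-orthogonality with summable block variances --- your two intermediate claims cannot both be derived: Menshov's classical counterexample gives an orthonormal series (hence one satisfying your block bound with $C=1$) with $\sum_k a_k^2<\infty$ that diverges almost everywhere, so for that series at least one of ``$\{S_{2^p}\}$ converges a.s.'' and ``$\max_{2^p<n\le 2^{p+1}}|S_n-S_{2^p}|\to 0$ a.s.'' must fail; the decay of $\phi_k$ has to enter the maximal inequality itself, not merely the constant $C$. (Even the dyadic subsequence alone is not free: $\sum_p\mathbb{E}\bigl(S_{2^{p+1}}-S_{2^p}\bigr)^2<\infty$ yields a.s.\ square-summability of the increments, not a.s.\ convergence of their sum.) The refined maximal inequality in which the logarithmic cost multiplies $\phi_h^{\epsilon/(2+\epsilon)}$ rather than $a_k^2$ --- the only place where the weight $\log k(\log\log k)^{1+\gamma}$ can be spent --- is never derived; it is deferred to ``the estimates in \cite{masry1987strong}.'' Since that reference is the source of the very lemma being proved, this deferral is circular: in effect your argument reduces to the same citation the paper makes, prefaced by a correct (and genuinely useful) roadmap of what that citation contains.
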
	

\begin{lemma}\label{Lj}(\cite{lennart1999system}, Theorem 8.2)
Assume that
\begin{itemize}
\item[(i)] $R(\theta)$ is a deterministic function that is continuous in $\theta \in \mathcal{M}$ and minimized at the set
$$\mathscr{D}=\left\{\theta \mid \theta \in \mathcal{M}, R(\theta)=\min _{\theta^{\prime} \in \mathcal{M}} R\left(\theta^{\prime} \right)\right\} $$
where $\mathcal{M}$ is a compact subset of $\mathbb{R}^{d}$.
\item[(ii)] A sequence of functions {\color{black}$\{R_{N}(\theta)\}_{N\geq1}$} converges to {\color{black}$R(\theta)$} almost surely and uniformly in $\mathcal{M} $ as $N$ goes to $+\infty$.
\end{itemize}
Then $\widehat{\theta}_{N}=\arg \min _{\theta \in \mathcal{M} } R_{N}(\theta)$ converges to $\mathscr{D}$ almost surely, namely, $\inf _{\theta^{*} \in \mathscr{D}}\left\|\widehat{\theta}_{N}-\theta^{*}\right\| \rightarrow 0$, as $N \to +\infty$.
\end{lemma}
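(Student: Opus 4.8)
The plan is to prove this as a standard argmin-stability (M-estimator consistency) result: uniform convergence of $R_N$ to a continuous $R$ over a compact set forces the minimizers of $R_N$ to cluster on the minimizer set $\mathscr{D}$ of $R$. First I would fix, once and for all, a realization $\omega$ in the probability-one event on which assumption (ii) holds, so that $\delta_N \triangleq \sup_{\theta \in \mathcal{M}} |R_N(\theta) - R(\theta)| \to 0$; from this point the argument is entirely deterministic. Since $\mathcal{M}$ is compact and, by (i), $R$ is continuous, $R$ attains its minimum $m^* \triangleq \min_{\theta \in \mathcal{M}} R(\theta)$, so $\mathscr{D}$ is nonempty and compact, and I fix any $\theta^\circ \in \mathscr{D}$.

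The engine of the proof is a two-sided sandwich built from the minimizing property of $\widehat\theta_N$. Because $\widehat\theta_N$ minimizes $R_N$ over $\mathcal{M}$ and $\theta^\circ \in \mathcal{M}$, I have $R_N(\widehat\theta_N) \le R_N(\theta^\circ)$, and inserting the uniform bound $\delta_N$ on each side would yield
\[
m^* \le R(\widehat\theta_N) \le R_N(\widehat\theta_N) + \delta_N \le R_N(\theta^\circ) + \delta_N \le R(\theta^\circ) + 2\delta_N = m^* + 2\delta_N,
\]
where the first inequality uses $\widehat\theta_N \in \mathcal{M}$ and the final equality uses $\theta^\circ \in \mathscr{D}$. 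Letting $N \to \infty$ and $\delta_N \to 0$ then gives $R(\widehat\theta_N) \to m^*$.

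Next I would close the argument by contradiction. Suppose $\mathrm{dist}(\widehat\theta_N, \mathscr{D}) \triangleq \inf_{\theta^* \in \mathscr{D}} \|\widehat\theta_N - \theta^*\|$ does not converge to $0$; then there exist $\varepsilon > 0$ and a subsequence with $\mathrm{dist}(\widehat\theta_{N_j}, \mathscr{D}) \ge \varepsilon$. By compactness of $\mathcal{M}$ I extract a further subsequence $\widehat\theta_{N_{j_i}} \to \bar\theta \in \mathcal{M}$, for which $\mathrm{dist}(\bar\theta, \mathscr{D}) \ge \varepsilon$, so $\bar\theta \notin \mathscr{D}$ and hence $R(\bar\theta) > m^*$. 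But continuity of $R$ gives $R(\widehat\theta_{N_{j_i}}) \to R(\bar\theta)$, while the sandwich above forces $R(\widehat\theta_{N_{j_i}}) \to m^*$; thus $R(\bar\theta) = m^*$, a contradiction. Therefore $\mathrm{dist}(\widehat\theta_N, \mathscr{D}) \to 0$ on the chosen event, which is precisely the claimed almost sure convergence.

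I expect no deep obstacle here; the only care needed is bookkeeping. The full-measure set on which $\delta_N \to 0$ must be fixed \emph{before} the deterministic subsequence extraction, so that the argmin convergence holds simultaneously for that $\omega$; and one must note that $\widehat\theta_N$ is genuinely attained, which in the intended application (Theorem~\ref{The2}) holds because each $R_N$ is a finite average of continuous convex functions and $\mathcal{M}$ is compact. Notably, neither convexity nor differentiability of the limit $R$ enters the argument --- only its continuity together with compactness of $\mathcal{M}$ --- so the same proof serves every cost $\Phi$ in Table~\ref{tab1}.
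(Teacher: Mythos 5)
Your proof is correct: the two-sided sandwich $m^* \le R(\widehat\theta_N) \le R_N(\theta^\circ) + \delta_N \le m^* + 2\delta_N$ followed by the compactness/continuity contradiction argument is precisely the standard argmin-consistency proof, and the bookkeeping (fixing the full-measure realization before the deterministic extraction) is handled properly. Note that the paper itself offers no proof of this lemma---it is imported verbatim from the cited reference (Theorem 8.2 of Ljung's book)---and your argument is essentially the classical one underlying that result, so there is no discrepancy to reconcile.
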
	

%\begin{definition}
%Let $(X,d)$ be a metric space and let $A \subseteq X$. We say that $A$ is compact if for every open cover $\left\{U_{\lambda}\right\}_{\lambda \in \Lambda}$ there is a finite collection $U_{\lambda}, \ldots, U_{\lambda_{k}}$ so that $A \subseteq \bigcup_{i=1}^{k} U_{\lambda_{i}}$. In other words a set is compact if and only if every open cover has a finite subcover.
%\end{definition}

For a univariate nonnegative convex function, it is direct to obtain the following result.

\begin{lemma}\label{conv<}
If $\Phi(\cdot)$ is nonnegative and convex on $\mathbb{R}$ with $\Phi(x)>\Phi(0)$, $\forall x \ne0$, then $\Phi(\frac{x+y}{2})< \frac{1}{2}\Phi(x)+\frac{1}{2}\Phi(y)$ for any $x<0<y$.
\end{lemma}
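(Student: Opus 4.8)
The plan is to argue by contradiction, exploiting the standard fact that equality in the midpoint-convexity inequality forces a convex function to be affine on the connecting segment, and then contradicting the strict-minimum hypothesis. First I would note that ordinary convexity already gives $\Phi\!\left(\frac{x+y}{2}\right)\le \frac12\Phi(x)+\frac12\Phi(y)$, so it remains only to rule out equality. Suppose, to the contrary, that $\Phi\!\left(\frac{x+y}{2}\right)= \frac12\Phi(x)+\frac12\Phi(y)$, and let $L(\cdot)$ denote the affine chord through $(x,\Phi(x))$ and $(y,\Phi(y))$. Convexity yields $\Phi(t)\le L(t)$ for all $t\in[x,y]$, with equality at $t=x$, at $t=y$, and---by the contradiction hypothesis---at the midpoint $m=\frac{x+y}{2}$.

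The hard part, and the crux of the argument, is to show that equality at the single interior point $m$ propagates to the entire interval, i.e.\ $\Phi\equiv L$ on $[x,y]$. To establish this I would fix any $t\in(x,m)$; then $m$ lies strictly between $t$ and $y$, so $m=\beta t+(1-\beta)y$ for some $\beta\in(0,1)$, and convexity combined with $\Phi\le L$ gives
$$\Phi(m)\le \beta\Phi(t)+(1-\beta)\Phi(y)\le \beta L(t)+(1-\beta)L(y)=L(m)=\Phi(m).$$
Since the two ends of this chain coincide, every inequality is in fact an equality; in particular $\Phi(t)=L(t)$. The same reasoning applied to $t\in(m,y)$ (writing $m$ as a convex combination of $x$ and $t$) shows $\Phi=L$ there as well, so $\Phi$ agrees with the affine function $L$ throughout $[x,y]$.

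Finally I would invoke the strict-minimum hypothesis to derive a contradiction. Since $x<0<y$, write $0=\lambda x+(1-\lambda)y$ with $\lambda=\frac{y}{y-x}\in(0,1)$. Affineness of $\Phi$ on $[x,y]$ gives $\Phi(0)=\lambda\Phi(x)+(1-\lambda)\Phi(y)$, whereas the hypotheses $\Phi(x)>\Phi(0)$ and $\Phi(y)>\Phi(0)$ force $\lambda\Phi(x)+(1-\lambda)\Phi(y)>\lambda\Phi(0)+(1-\lambda)\Phi(0)=\Phi(0)$. Together these yield $\Phi(0)>\Phi(0)$, the desired contradiction, so equality is impossible and the strict inequality $\Phi\!\left(\frac{x+y}{2}\right)<\frac12\Phi(x)+\frac12\Phi(y)$ holds. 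I expect the propagation step to be the only place requiring genuine care; the opening reduction and the closing contradiction are immediate from convexity and from the assumption that $0$ is the strict minimizer of $\Phi$.
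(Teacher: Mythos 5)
Your proof is correct. Note that the paper itself gives no argument for this lemma --- it is introduced with ``it is direct to obtain the following result'' --- so your write-up supplies details the authors left implicit. Each step checks out: equality in the midpoint inequality forces $\Phi$ to touch the chord $L$ at $x$, at $y$, and at $m=\frac{x+y}{2}$; your propagation argument (the collapsing chain of inequalities, using $\beta>0$ to cancel) correctly upgrades this to $\Phi\equiv L$ on $[x,y]$; and since $0$ lies in the open interval $(x,y)$, affineness of $\Phi$ there contradicts $\Phi(x)>\Phi(0)$ and $\Phi(y)>\Phi(0)$. Two remarks. First, the full propagation step is more than you need, and the ``direct'' argument the authors presumably have in mind is shorter: since $\Phi(0)<\Phi(x)$ and $\Phi(0)<\Phi(y)$, the point $0$ lies strictly below the chord, i.e.\ $\Phi(0)<L(0)$; if $m=0$ the claim is immediate, and otherwise $m$ is a strict convex combination of $0$ and whichever endpoint lies on the same side, so one application of convexity gives $\Phi(m)\le \beta\Phi(0)+(1-\beta)\Phi(y)<\beta L(0)+(1-\beta)L(y)=L(m)=\frac12\Phi(x)+\frac12\Phi(y)$, yielding the strict inequality directly with no contradiction argument. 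Second, observe that nonnegativity of $\Phi$ is never used in either route --- only convexity and the strict minimum at the origin --- so your proof in fact establishes the lemma under slightly weaker hypotheses than stated.
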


%\begin{IEEEproof}
%Without loss of generality, assume that $\Phi(0)=0$. Thus if $|x|<|y|$, by the convexity of $\Phi(\cdot)$ we have
%\begin{align}
%\Phi(\frac{x+y}{2})\le \frac{\Phi(y)}{y}\cdot\frac{x+y}{2}.
%\end{align}
%In fact, we have $\frac{\Phi(y)}{y}\cdot\frac{x+y}{2}<\frac{\Phi(x)+\Phi(y)}{2}$ since $x<0$.
%In a similar manner we can prove that $\Phi(\frac{x+y}{2})< \frac{1}{2}\Phi(x)+\frac{1}{2}\Phi(y)$, if $x<0<y$ and $|x|>|y|$. Thus we verify that $\Phi(\frac{x+y}{2})< \frac{1}{2}\Phi(x)+\frac{1}{2}\Phi(y)$, if $x<0<y$, where the inequality strictly holds.\\
%\end{IEEEproof}

%\begin{lemma}[$c_l$ inequality]
%$$\left(\sum_{i=1}^{n}\left|a_{i}\right|\right)^{l} \le c_l \sum_{i=1}^{n}\left|a_{i}\right|^{l},$$
%where $c_l=1$ if $0<r\le1$ and $c_l=n^{r-1}$ if $r>1$.
%\end{lemma}
%\begin{lemma}\label{EX-EX}
%If $X$ is a random variable, then for $\forall \gamma>0$, it holds that
%$$\mathbb{E}|X-\mathbb{E} X|^{2+\gamma} \leqslant 2^{2+\gamma} \mathbb{E}|X|^{2+\gamma} .$$
%\end{lemma}			

\section{Proof of Theorem \ref{con}}

First, we prove that if $\Phi(\cdot)$ is convex, then under the given assumptions the function $R(\theta)=\mathbb{E}(\Phi(y_{k+1} -\theta^{T}x_{k}))$ is also convex. In fact, for any fixed $0 \le \lambda \le 1$ and $\theta_{1},\theta_{2} \in \mathbb{R}^{d}$, we have the following inequalities
\begin{align}\label{convex12}
& R\left(\lambda \theta_{1}+(1-\lambda) \theta_{2}\right) \nonumber \\
=& \mathbb{E}\left(\Phi\left(y_{k+1}-\left(\lambda \theta_{1}+(1-\lambda) \theta_{2}\right)^{T} x_{k}\right)\right) \nonumber \\
\le& \mathbb{E}\left(\lambda \Phi\left(y_{k+1}-\theta_{1}^{T} x_{k}\right)+(1-\lambda) \Phi\left(y_{k+1}-\theta_{2}^{T} x_{k}\right)\right) \nonumber\\
=& \lambda R\left(\theta_{1}\right)+(1-\lambda) R\left(\theta_{2}\right),
\end{align}
which indicates that $R(\theta)$ is convex.

Next we prove that $R(\theta)\ge R(\theta^*)$. Using the smoothing property of the conditional expectation (see, e.g., \cite{chow1978probability}) and noting that $w_{k+1}$ is independent of $x_k$ and $x_k$ is with a pdf $q(\cdot)$, by A3) we have 
\begin{align} \label{Rtheta}
R(\theta) &=\mathbb{E}\left[\Phi\left(y_{k+1}-\theta^{T} x_{k}\right)\right] \nonumber \\
%&=\mathbb{E}\left[\mathbb{E}\left[\Phi\left(w_{k+1}-\left(\theta-\theta^{*}\right)^{T} x_{k}\right) \big| x_{k}\right]\right] \nonumber \\
&=\mathbb{E}\left[\mathbb{E}\left[\Phi\left(w_{k+1}-\left(\theta-\theta^{*}\right)^{T} x\right) \right]\Big|_{x=x_{k}}\right] \nonumber \\
%&=\mathbb{E}\left [ {\int_{\mathbb{R}} \Phi(s-\left(\theta-\theta^{*}\right)^{T} x) f_{w}(s) \mathrm{d} s}\Big|_{x=x_{k}} \right ]\nonumber \\
&=\int_{\mathbb{R}^{d}} \int_{\mathbb{R}} \Phi\left(s-\left(\theta-\theta^{*}\right)^{T} u\right) f_{w}(s) q(u) \mathrm{d}s\mathrm{d}u.
\end{align}

Thus we obtain
\begin{align} \label{17}
&R(\theta)-R(\theta^*)\nonumber \\
=&\int_{\mathbb{R}^{d}} \int_{\mathbb{R}} \left[\Phi\left(s-\left(\theta-\theta^{*}\right)^{T} u\right)-\Phi\left(s\right) \right]f_{w}(s) q(u) \mathrm{d}s\mathrm{d}u\nonumber \\
\geq & \int_{\mathbb{R}^{d}} \int_{\mathbb{R}} \partial \Phi\left(s\right)\cdot\left[ -\left(\theta\!\!-\!\!\theta^{*}\right)^{T} u\right]f_{w}(s) q(u) \mathrm{d}s\mathrm{d}u=0,
\end{align}
where $\partial \Phi\left(s\right)$ denotes the sub-gradient of $\Phi\left(s\right)$ and the last equality holds because of $\mathbb{E}x_k=0$.

From (\ref{17}) we know that $\theta^*$ is a minimizer of $R(\cdot)$.

We proceed to prove that $\theta^*$ is the unique minimizer of $R(\cdot)$. Suppose that there exists another minimizer $\hat{\theta}\neq \theta^*$ such that $R(\hat{\theta})=R(\theta^*)$. By the convexity of $R(\theta)$, it follows that for any fixed $\lambda \in (0,1)$, $\lambda \hat{\theta}+ (1-\lambda)\theta^*$ is also a minimizer of $R(\theta)$ and
\begin{align}\label{zero}
R(\lambda \hat{\theta}+ (1-\lambda)\theta^*)-\lambda R(\hat{\theta})-(1-\lambda)R(\theta^*)=0.
\end{align}

By the definition of $R(\theta)$ and the convexity of $\Phi(\cdot)$, for any fixed $\lambda\in(0,1)$ we have
\begin{align}
&R(\lambda \hat{\theta}+ (1-\lambda)\theta^*)-\lambda R(\hat{\theta})-(1-\lambda)R(\theta^*)\nonumber\\
=&\mathbb{E}(\Phi(y_{k+1} -(\lambda \hat{\theta}+ (1-\lambda)\theta^*)^{T}x_{k}))-\mathbb{E}(\lambda \Phi(y_{k+1} - \hat{\theta}^{T}x_{k}))\nonumber\\
&-\mathbb{E}((1-\lambda) \Phi(y_{k+1} - \theta^{*T}x_{k}))\nonumber\\
=&\int_{\mathbb{R}^{d}} \int_{\mathbb{R}}\Big[ \Phi(s-\lambda u^{T}(\hat{\theta}-\theta^*))- \lambda \Phi(s-u^{T}(\hat{\theta}-\theta^{*}))\nonumber\\
& -(1-\lambda)\Phi(s)\Big]\cdot f_{w}(s)q(u)\mathrm{d}s\mathrm{d}u\le0,
\end{align}
where the last inequality holds true because
\begin{align}
\nonumber &\Phi(s-\lambda u^{T}(\hat{\theta}-\theta^*))- \lambda \Phi(s-u^{T}(\hat{\theta}-\theta^{*}))-(1-\lambda) \Phi(s)\\
=&\Phi(\lambda(s- u^{T}(\hat{\theta}-\theta^*))+(1-\lambda)s)\nonumber\\
&-\lambda \Phi(s-u^{T}(\hat{\theta}-\theta^{*}))-(1-\lambda) \Phi(s)
\le 0.\label{24}
\end{align}

Let $g(\lambda,s,u)\triangleq\Phi(s-\lambda u^{T}(\hat{\theta}-\theta^*))- \lambda \Phi(s-u^{T}(\hat{\theta}-\theta^{*}))-(1-\lambda) \Phi(s)$.
In the following we are going to prove that there exists some $\lambda\in(0,1)$ and a set $\mathcal{D}\subset \mathbb{R}^d\times\mathbb{R} $, whose Lebesgue measure is greater than 0, such that
\begin{align}\label{contraction}
 \int_{\mathcal{D}}g(\lambda,s,u)f_{w}(s)q(u)\mathrm{d}s\mathrm{d}u<0.
\end{align}

By Lemma \ref{conv<} we notice that $\Phi(\frac{x+y}{2})< \frac{1}{2}\Phi(x)+\frac{1}{2}\Phi(y)$ for any $x<0<y$.
Set $\mathcal{D}=\{(u ,s)\in \mathbb{R}^d\times\mathbb{R}~|~0<s<u^{T}(\hat{\theta}-\theta^{*})\}$ and thus $g(\frac{1}{2},s,u)<0$ for $(s,u)\in\mathcal{D}$. By noting that $f_{w}(\cdot)$ is positive and continuous at the origin, there exists $\delta_1>0$ such that $f_w(s)>0$ if $0<s<\delta_1$. Noting also the assumption A2), we can find a set $\mathcal{A}\subset\mathbb{R}^d$ with Lebesgue measure being greater than $0$ such that $q(u)>0$ for $u\in\mathcal{A}$.

Noting that the Lebesgue measure of the set $\{u\in\mathbb{R}^d:0< u^{T}(\hat{\theta}-\theta^{*})\}\cap B_r(0) $ is greater than $0$, there exists $\delta_2>0$ such that the Lebesgue measure of the set $\mathcal{A}\triangleq\{u\in\mathbb{R}^d~|~\delta_2< u^{T}(\hat{\theta}-\theta^{*})\}\cap B_r(0) $ is also greater than $0$. Without losing generality, we assume that $\delta_1<\delta_2$.

Thus we have $\mathcal{A}\times(0,\delta_1)\subset \mathcal{D}$ and $g(\frac{1}{2},s,u)f_w(s)q(u)<0$ for $(u,s)\in \mathcal{A} \times (0,\delta_1)$, which leads to
\begin{align*}
&\int_{\mathcal{D}}g\left(\frac{1}{2},s,u\right)f_{w}(s)q(u)\mathrm{d}s\mathrm{d}u\\
&\le \int_{ \mathcal{A}\times (0,\delta_1) }g\left(\frac{1}{2},s,u\right)f_{w}(s)q(u)\mathrm{d}s\mathrm{d}u<0    
\end{align*}
and (\ref{contraction}) is proved for $\lambda=\frac{1}{2}$, which contradicts with (\ref{zero}) and (\ref{24}). Then the conclusion that $\theta^{*}$ is the unique minimizer of $R(\theta)$ follows. This finishes the proof.

\section{Proof of Lemma \ref{Nlim0}}

Here we only prove (\ref{e1N}), while (\ref{e2N}) and (\ref{xN}) can be treated similarly. By the Kronecker lemma, for (\ref{e1N}) we only need to prove
\begin{align}\label{e1}
\sum_{k=1}^{\infty}\frac{1}{k}\left(\|x_k\||y_{k+1}|^l-\mathbb{E}\|x_k\||y_{k+1}|^l \right)< \infty \text{ a.s.}
\end{align}
with $l>0$ specified in assumption A1).

Noting that $\{x_k\}_{k\ge0}$ is $\phi$-mixing with geometric mixing coefficients, by Lemma \ref{stoas} given in the Appendix, for (\ref{e1}) it suffices to prove that for some $\epsilon>0$,
\begin{equation}\label{111}
\sum_{k=1}^{\infty} \frac{1}{k^2}\left( \mathbb{E}\left|\left\| x_{k}\right\|\left|y_{k+1}\right|^{l}-\mathbb{E} \left\| x_{k}\right\|\left|y_{k+1}\right|^{l}\right|^{2+\epsilon}\right) ^{\frac{2}{2+\epsilon}}<\infty.
\end{equation}

Noting the mutual independence of $x_k$ and $w_{k+1}$, direct calculation leads to
\begin{align}\label{23}
&\mathbb{E} \left( \left\| x_{k}\right\|\left|y_{k+1}\right|^{l}\right) ^{2+\epsilon}\nonumber\\
\le& \mathbb{E} \left( \left\|x_{k}\right\|\left|w_{k+1}\right|^l+\left\|\theta^*\right\|^l\left\|x_k\right\|^{l+1} \right)^{2+\epsilon}\nonumber\\
\le& c\Big( \mathbb{E}\left\|x_{k}\right\|^{2+\epsilon}\mathbb{E}\left|w_{k+1}\right|^{l(2+\epsilon})+\mathbb{E}\left\|x_{k}\right\|^{(l+1)(2+\epsilon)}\Big),
\end{align}
where $c>0$ is a constant.
By assumption A2) and A3), we have that $\mathbb{E}\left\|x_{k}\right\|^{(l+1)(2+\epsilon)}<\infty$ and $\mathbb{E}\left|w_{k+1}\right|^{l(2+\epsilon})<\infty$ for sufficiently small $\epsilon>0$, which combining with (\ref{23}) leads to $\mathbb{E} \left( \left\| x_{k}\right\|\left|y_{k+1}\right|^{l}\right) ^{2+\epsilon}<\infty$ and accordingly (\ref{111}) and (\ref{e1}) holds. Thus (\ref{e1N}) is proved. This finishes the proof.

\section{Proof of Theorem \ref{The2}}

It follows from (\ref{RN}) that
\begin{align}\label{9898}
&R_{N}(\theta)-R(\theta)\nonumber\\
&=\frac{1}{N}\sum_{k=1}^{N}\Big(\Phi(y_{k+1} -\theta ^{T}x_{k})-\mathbb{E}(\Phi(y_{k+1} -\theta ^{T}x_{k}))\Big).
\end{align}

To show $R_{N}(\theta)-R(\theta)\underset{N \rightarrow \infty}{\longrightarrow}0$ a.s., by the Kronecker lemma (See Appendix A), it suffices to prove 					
\begin{align}\label{KR}\!\!\!\!
\sum_{k=1}^{\infty } \frac{1}{k}\Big(\Phi(y_{k+1} \!-\!\theta ^{T}x_{k})\!-\!\mathbb{E}(\Phi(y_{k+1} \!\!-\!\!\theta ^{T}x_{k}))\Big)\!<\!\infty \text { a.s. }
\end{align}
By Lemma \ref{stoas} given in Appendix, for (\ref{KR}) we only need to show
\begin{align}\label{mixing}
\sum_{k=1}^{\infty }& \frac{1}{k^{2}}\Big(\mathbb{E}\big|\Phi \left (y_{k+1} -\theta ^{T}x_{k}\right) \nonumber\\
&-\mathbb{E} \left(\Phi \left (y_{k+1} -\theta ^{T}x_{k}\right)\right)\big|^{2+\epsilon}\Big)^{\frac{2}{2+\epsilon}} <\infty,
\end{align}
for some $\epsilon>0$.

We first prove that there exists $\epsilon>0$ such that $\sup_k\mathbb{E}\left |\Phi(y_{k+1} -\theta ^{T}x_{k}) \right |^{2+\epsilon}<\infty$. By the assumption on $\Phi(\cdot)$ and carrying out a similar analysis as (\ref{Rtheta}), for any fixed $\epsilon>0$, we have
\begin{align}\label{Ephi}
&\mathbb{E}\left |\Phi(y_{k+1} -\theta ^{T}x_{k})  \right |^{2+\epsilon}\nonumber\\
=&\int_{\mathbb{R}^{d}} \int_{\mathbb{R}}\left| \Phi\left(s-\left(\theta-\theta^{*}\right)^{T} u\right) \right|^{2+\epsilon}f_{w}(s) q(u) \mathrm{d}s\mathrm{d}u \nonumber\\
%\le& \int_{\mathbb{R}^{d}} \int_{\mathbb{R}}\left[ c\left|s-\left(\theta-\theta^{*}\right)^{T} u\right|^{l}+1 \right]^{2+\epsilon}f_{w}(s) q(u) \mathrm{d}s\mathrm{d}u \nonumber\\
\le& c\int_{\mathbb{R}^{d}}\!\! \int_{\mathbb{R}}\!\!\left[ \left|s\right|^{l\left(2+\epsilon\right)}\!\!+\!\! \left\|\theta\!-\!\theta^{*}\right\|^{l\left(2+\epsilon\right)}\!\left| u\right|^{l\left(2+\epsilon\right)}\!+\!1 \right]\!\!f_{w}(s) q(u) \mathrm{d}s\mathrm{d}u 
\end{align}	
where $c>0$ is a constant which may vary among different lines of inequalities.

By assumption A2) and A3), for some $\epsilon_0>0$, {\color{black}$\mathbb{E}\left\|x_{k}\right\|^{2(l+2)+\epsilon}_0<\infty$} and $\mathbb{E}\left|w_{k+1}\right|^{2l+\epsilon}_0<\infty$ for sufficiently small $\epsilon>0$, and hence
\begin{align}\label{69}
\sup_k\mathbb{E}\left |\Phi(y_{k+1} -\theta ^{T}x_{k})  \right |^{2+\epsilon}< \infty
\end{align}
for any $0 \le \epsilon \le \frac{\epsilon_{0}}{l}$.

Noting that there exists $c_\epsilon>0$ such that $\mathbb{E}\left|\Phi \left (y_{k+1} -\theta ^{T}x_{k}\right) -\mathbb{E} \left(\Phi \left (y_{k+1} -\theta ^{T}x_{k}\right)\right)\right|^{2+{\epsilon}}\le c_\epsilon\mathbb{E}\left |\Phi(y_{k+1} -\theta ^{T}x_{k})  \right |^{2+{\epsilon}}$, the validity of \eqref{mixing} is verified according to \eqref{69}. Hence (\ref{KR}) is proved and (\ref{RNR}) holds true.

Next we prove (\ref{minlim}). The key steps are as follows. We first show that with respect to the variable $\theta$, the function sequence $\left\{R_{N}(\theta)\right\}_{N\geq1}$ converges to $R(\theta)$ uniformly in $\mathcal{M}\subset\mathbb{R}^{d}$, where $\mathcal{M}$ can be any fixed compact set in $\mathbb{R}^{d}$ containing $\theta^{*}$ as an interior point. Then we prove that
\begin{align*}
\underset{\theta \in \mathcal{M}}{\operatorname{argmin}} R_{N}(\theta)=\underset{\theta \in  \mathbb{R} ^{d}}{\operatorname{argmin}} R_{N}(\theta),
\end{align*}
for all $N$ large enough and obtain (\ref{minlim}) by Lemma \ref{Lj} given in the Appendix.

{\em Step 1)} We first show that $\left\{R_{N}(\theta)\right\}_{N\geq1}$ converges to $R(\theta)$ uniformly in any fixed compact set $\mathcal{M} \subset \mathbb{R} ^{d}$.

By Lemma \ref{Nlim0} and noting that $\{x_k\}_{k\ge1}$ and $\{w_{k+1}\}_{k\ge1}$ are both identically distributed, respectively, we have
\begin{align}\label{qianti}
&\frac{1}{N}\sum_{k=1}^{N} (\left\|x_{k}\right\|\left|y_{k+1}\right|^{l}+\left\|x_{k}\right\|^{l+1}+\left\|x_{k}\right\|)\nonumber\\
&\overset{N\to\infty}{\longrightarrow} \mathbb{E}\left(\left\|x_{1}\right\|\left|y_{2}\right|^{l}+\left\|x_{1}\right\|^{l+1}+\left\|x_{1}\right\|\right)~~ \text{a.s.}
\end{align}

From (\ref{qianti}) it follows that there exists a set $\Omega_{0}\subset \Omega$ with $\mathbb{P}\left\{\Omega_{0}\right\}=1 $ such that (\ref{qianti}) holds along any fixed sample path $\omega\in \Omega_0$. In the following analysis we will focus on a fixed sample path in $\Omega_0$.

By (\ref{qianti}), for some $c>0$ we have
\begin{align}\label{bound}
\frac{1}{N}\sum_{k=1}^{N} \left(\left\|x_{k}\right\|\left|y_{k+1}\right|^{l}+\left\|x_{k}\right\|^{l+1}+\left\|x_{k}\right\|\right) \le c, ~~\forall~N\geq1.
\end{align}

For any fixed $\theta_{1}, \theta_{2} \in \mathcal{M}$, we have
\begin{align}
&|R_{N}(\theta_{1})-R_{N}(\theta_{2})| \nonumber\\
%&=\left | \frac{1}{N}\sum_{k=1}^{N}\Phi(y_{k+1} -\theta_{1} ^{T}x_{k})-\frac{1}{N}\sum_{k=1}^{N}\Phi(y_{k+1} -\theta_{2} ^{T}x_{k}) \right |  \nonumber\\
&\le \frac{1}{N}\sum_{k=1}^{N}\left | \Phi(y_{k+1} -\theta_{1} ^{T}x_{k})-\Phi(y_{k+1} -\theta_{2} ^{T}x_{k}) \right |.
\end{align}

Noting Remark \ref{Remk3}, we have	
\begin{align}\label{21}
&|R_{N}(\theta_{1})-R_{N}(\theta_{2})| \nonumber \\
\le& \frac{1}{N}\!\!\sum_{k=1}^{N}\!|\partial\Phi(y_{k+1}-\theta_{1} ^{T}x_{k})\!+\!\partial\Phi(y_{k+1}\!-\!\theta_{2} ^{T}x_{k})|\!\!\left\|x_{k}\right\|\!\!\left\|\theta_{1}\!-\!\theta_{2}\right\|.
\end{align}

Since the set $\mathcal{M}$ is compact, without loss of generality, we assume $\|\theta\| \le M$, $\forall~\theta \in \mathcal{M}$.
By Remark \ref{Remk3}, we have that
\begin{align}\label{22}
&|\partial\Phi(y_{k+1}-\theta_{1} ^{T}x_{k})+\partial\Phi(y_{k+1}-\theta_{2} ^{T}x_{k})|\nonumber\\
\le& c\left(|y_{k+1}-\theta_{1} ^{T}x_{k}|^l+|y_{k+1}-\theta_{2} ^{T}x_{k}|^l+1\right)\nonumber\\
\le& c\left(\left|y_{k+1}\right|^{l}\!+\!M\left\|x_{k}\right\|^{l}\!+\!1\right)\le c \left(\left|y_{k+1}\right|^{l}\!+\!\left\|x_{k}\right\|^{l}\!+\!1\right),
\end{align}
where $c>0$ is a constant which may vary among different lines of inequalities.

Combining (\ref{bound}), (\ref{21}), and (\ref{22}), we have that
\begin{align}\label{equic}
&|R_{N}(\theta_{1})-R_{N}(\theta_{2})| \nonumber\\
%\le& \frac{1}{N}\sum_{k=1}^{N}|\partial\Phi(y_{k+1}-\theta_{1} ^{T}x_{k})+\partial\Phi(y_{k+1}-\theta_{2} ^{T}x_{k})|\left\|x_{k}\right\|\left\|\theta_{1}-\theta_{2}\right\| \nonumber\\
\le& c\frac{1}{N}\sum_{k=1}^{N} \left(\left\|x_{k}\right\|\left|y_{k+1}\right|^{l}+\left\|x_{k}\right\|^{l+1}+\left\|x_{k}\right\|\right) \left\|\theta_{1}-\theta_{2}\right\| \nonumber\\
\le& c\left\|\theta_{1}-\theta_{2}\right\|, ~~\forall~N\geq1,
\end{align}
where the constant $c>0$ does not depend on $N$ varying among different lines of inequalities.

In fact, (\ref{equic}) shows that the function sequence $\left\{R_{N}(\theta)\right\}_{N\ge1}$ is equi-continuous on $\mathcal{M}$, that is, for any $\varepsilon>0$, there exists $\delta=\frac{\varepsilon}{3c}>0$ with $c>0$ specified in (\ref{equic}) such that
\begin{align}\label{delta}
|R_{N}(\theta_{1})-R_{N}(\theta_{2})|<\frac{\varepsilon}{3},~~\forall~N\geq1,
\end{align}
for any $\theta_1, \theta_2 \in \mathcal{M}$, $\left\|\theta_{1}-\theta_{2}\right\|<\delta$.

With $\delta>0$ mentioned above, it directly follows by the compactness of $\mathcal{M}$ that
\begin{align}\label{41}
\mathcal{M}\subseteq \bigcup_{s=1}^{q} B\left(\theta_{s}, \delta\right),
\end{align}
where $B\left(\theta, \delta\right)=\{x\in \mathbb{R}^d~|~\|x-\theta\|<\delta\}$ is a ball with the radius $\delta$ centered at $\theta$.

Because $\{R_{N}(\theta)\}_{N\geq1}$ converges to $R(\theta)$ at $\theta=\theta_{1}, \cdots, \theta_{q}$, for $\varepsilon>0$ mentioned above, there exists $\widetilde{N} $ large enough such that
\begin{align}\label{convergeq}
|R_{n}(\theta_{s})-R_{m}(\theta_{s})|<\frac{\varepsilon}{3},~~ s=1,2,\cdots,q,  ~~\forall~ n,m >\widetilde{N}.
\end{align}

Noting (\ref{41}), for any $\theta \in \mathcal{M}$, there exists some $\theta_{s}$, $s\in \left \{ 1,2,\cdots ,q \right \}$ such that $\theta \in B\left(\theta_{s}, \delta\right)$. By (\ref{delta}), we have
\begin{align}\label{embrass}
\left|R_{N}(\theta)-R_{N}\left(\theta_{s}\right)\right|<\frac{\varepsilon}{3},~~\forall~N\geq1.
\end{align}

Then combining (\ref{convergeq}) and (\ref{embrass}), we have that for $\forall~n,m >\widetilde{N} $ and any $\theta \in \mathcal{M}$, there exists $\theta_{s}$ such that
\begin{align}\label{uniform}
&|R_{n}(\theta)-R_{m}(\theta)|\nonumber\\
\leq&|R_{n}(\theta)\!-\!R_{n}(\theta_{s})|\!\!+\!\!|R_{n}(\theta_{s})\!-\!R_{m}(\theta_{s})|\!\!+\!\!|R_{m}(\theta_{s})\!-\!R_{m}(\theta)| \!<\! \varepsilon.
\end{align}
It follows from (\ref{uniform}) and (\ref{RNR}) that as $N\longrightarrow \infty $, $\{R_{N}(\theta)\}_{N\geq1}$ converges to $R(\theta)$ uniformly in $\mathcal{M}$ {\color{black}and almost surely in $\Omega$}.

{\em Step 2)} Next we show that $\bar{\theta}_{N}=\arg \min _{\theta \in \mathcal{M}} R_{N}(\theta)$ converges to $\theta^{*}$.

{\color{black}By far, we obtain the almost sure and uniform convergence of $R_{N}(\theta)$. Noting that $\theta^{*}$ is the unique minimizer of $R(\theta)$} satisfying $\theta^*\in \mathcal{M}$, by Lemma \ref{Lj} given in the Appendix, it directly follows that
\begin{align}\label{compact}
\bar{\theta}_{N} \underset{N \rightarrow \infty}{\longrightarrow} \theta^{*}~~\text { a.s. }
\end{align}		

{\em Step 3)} Finally, we prove that for $\widehat{\theta}_{N}$ generated from (\ref{7}), $\bar{\theta}_{N}=\widehat{\theta}_{N}$ for all $N$ large enough.

Noting that $\theta^{*}$ is an interior point of $\mathcal{M}$ and also (\ref{compact}), we can obtain that $\bar{\theta}_{N},N\geq1$ are also interior points of $\mathcal{M}$ for all $N$ large enough. By the convexity of $R_N(\theta)$, this implies that
\begin{align}
\bar{\theta}_{N}=\arg \min _{\theta \in \mathcal{M}} R_{N}(\theta)=\arg \min _{\theta \in \mathbb{R}^d} R_{N}(\theta)\label{46}
\end{align}
and thus $\bar{\theta}_{N}=\widehat{\theta}_{N}$, for all $N$ large enough. Combining (\ref{compact}) and (\ref{46}), the conclusion (\ref{minlim}) is proved.

\section{Proof of Proposition \ref{Prop1}}

By Theorem \ref{con}, we know that $\theta^*$ is the unique minimizer of the convex function $R(\theta)$. Thus it suffices to prove that $\nabla R(\theta)=-\mathbb{E}\left[\varphi\left(y_{k+1}-\theta^{T} x_{k}\right) \cdot x_{k}\right]$.

Set $\theta=[\theta_1,\cdots,\theta_d]^T\in\mathbb{R}^d$. We consider $\frac{\partial R(\theta)}{\partial \theta _{i}}$, where $\theta_{i}$ is the $i$-th component of $\theta$, and it follows that
\begin{align}\label{Lebe}
\frac{\partial R(\theta)}{\partial \theta _{i}}
=&\lim_{h \to 0}\int_{\mathbb{R}^{d}} \int_{\mathbb{R}} \Big[\frac{\Phi\left(s-(\theta+he_i-\theta^*)^Tu\right)}{h}\nonumber\\
&-\frac{\Phi(s-\left(\theta-\theta^{*}\right)^{T} u)}{h}\Big]f_{w}(s) q(u) \mathrm{d}s\mathrm{d}u,
\end{align}
where $e_{i}$ being the vector in $\mathbb{R}^d$ with the $i$-th component being $1$ and others being zero.

Noting that $\Phi(\cdot)$ has a derivative $\varphi(\cdot)$, by the mean value theorem, there exists $|\bar{h}|\le|h|\le1$ such that
\begin{align}
&\Big|\frac{\Phi\left(s-(\theta+he_i-\theta^*)^Tu\right) -\Phi(s-(\theta-\theta^{*})^{T} u)}{h}\Big|\nonumber
\end{align}
\begin{align}
=&\left|\varphi\left ( s-(\theta-\theta^{*})^{T} u+\bar{h}u_{i}  \right ) u_{i}\right|\nonumber\\ 
%\le&c \|u\| \left ( \left | s-(\theta-\theta^{*})^{T} u+\bar{h}u_{i} \right |^{l}+1  \right )\nonumber\\
\le& c \|u\|\left[\left|s\right|^l+\|\theta-\theta^*\|^l\|u\|^l+\|u\|^l+1\right],
\end{align}
where $u_{i}$ is the $i$-th component of $u \in \mathbb{R}^{d}$ and $c>0$ is a constant which may vary among different lines of inequalities.

By the assumption A2) and A3), we know that $\mathbb{E}\|x_k\|^{l+1}<\infty$ and $\mathbb{E}|w_{k+1}|^l<\infty$. Thus
\begin{align}\label{81}
\int_{\mathbb{R}^{d}}\!\!\! \int_{\mathbb{R}} \left ( \left | s \right |^{l}\left \| u \right \| +\!\left \| u \right \|^{l+1}\! +\!\left \| u \right \|  \right )f_{w}(s) q(u) \mathrm{d}s\mathrm{d}u < \infty.
\end{align}

By the Lebesgue dominated convergence theorem{\color{black}\cite{chow1978probability}}, from (\ref{Lebe}) it follows that
\begin{align}
\frac{\partial R(\theta) }{\partial \theta _{i}}
=& \int_{\mathbb{R}^{d}} \int_{\mathbb{R}} \lim_{h \to 0} \Big[\frac{\Phi\left(s-(\theta+he_i-\theta^*)^Tu\right)}{h}\nonumber\\
&-\frac{\Phi(s-\left(\theta-\theta^{*}\right)^{T} u)}{h}\Big] {\color{black}f_{w}(s)} q(u) \mathrm{d}s\mathrm{d}u \nonumber \\
=&-\int_{\mathbb{R}^{d}}\!\! \int_{\mathbb{R}}\!\!\! \varphi ( s\!-\!(\theta\!-\!\theta^{*})^{T} u  )u_{i}f_{w}(s) q(u) \mathrm{d}s\mathrm{d}u.
\end{align}
And thus
\begin{align}\label{dev-int}
\frac{\partial R(\theta)}{\partial\theta}
=&-\!\!\int_{\mathbb{R}^{d}}\!\! \int_{\mathbb{R}}\!\! \varphi\left ( s\!-\!(\theta-\theta^{*})^{T} u \right )  u f_{w}(s) q(u) \mathrm{d}s\mathrm{d}u.
\end{align}

On the other hand, direct calculation leads to
\begin{align}
&\mathbb{E}\left[\varphi\left(y_{k+1}-\theta^{T} x_{k}\right) \cdot x_{k}\right]\nonumber\\
=&\int_{\mathbb{R}^{d}} \int_{\mathbb{R}} \varphi\left ( s-(\theta-\theta^{*})^{T} u \right )  u f_{w}(s) q(u) \mathrm{d}s\mathrm{d}u.\label{59}
\end{align}

Combining (\ref{dev-int}) and (\ref{59}), we prove that $\nabla R(\theta)=-\mathbb{E}\left[\varphi\left(y_{k+1}-\theta^{T} x_{k}\right) \cdot x_{k}\right]$ and this finishes the proof.

\section{Proof of Lemma \ref{Le2}}

Note that there are random truncations in Algorithm \ref{Algm3}. We first consider the case $\lim_{k \to \infty} \sigma _k<\infty $.
Then there exists an $ \alpha>0 $ such that {\color{black}$\|\theta_{k}\|< \alpha,~k \geq 1$} and for some sufficiently large $k_{0}$, there is no truncation for $\theta_{k},~k>n_{k_{0}}$ and thus
\begin{align}\label{notrunc}
\theta_{j+1}=\theta_{n_{k}}+\sum_{i=n_{k}}^{j} a_{i} x_{i} \varphi\left(y_{i+1}-\theta_{i}^{T} x_{i}\right).
\end{align}

It follows from (\ref{notrunc}) and A1') that for $k\geq k_0$ and $ j=n_{k}, \ldots, m\left(n_{k}, T\right)$,		
\begin{align}\label{ineq}
&\left\|\theta_{j+1}-\theta_{n_{k}}\right\|
=\left\|  \sum_{i=n_{k}}^{j} a_{i}  x_{i} \varphi\left(y_{i+1}-\theta_{i}^{T} x_{i}\right)  \right\| \nonumber\\ 
%\le& \sum_{i=n_{k}}^{j} \frac{1}{i}\left\|x_{i}\right\|\left|\varphi\left(y_{i+1}-\theta_{i}^{T} x_{i}\right)\right|\nonumber\\
\le& c\sum_{i=n_{k}}^{j} \frac{1}{i}\left\|x_{i}\right\|\left(\left|y_{i+1}-\theta_{i}^{T} x_{i}\right|^l+1\right) \nonumber\\
\le& c\sum_{i=n_{k}}^{j} \frac{1}{i}\|x_{i}\|\left[\left|y_{i+1}\right|^l+\alpha^l\| x_{i}\|^l+1\right]\nonumber
\end{align}
\begin{align}
%\le& c \sum_{i=n_{k}}^{j} \frac{1}{i}\left\|x_{i}\right\| \left(|y_{i+1}|^{l}+\left\|x_{i}\right\|^{l}+1\right)\nonumber \\
=& c\! \sum_{i=n_{k}}^{j} \frac{1}{i}\!(\left\| x_{i}\right\|\left|y_{i+1}\right|^{l}\!\!-\!\mathbb{E}\left\| x_{i}\right\|\left|y_{i+1}\right|^{l})
+c\!\! \sum_{i=n_{k}}^{j}\!\! \frac{1}{i} \mathbb{E}\left\| x_{i}\right\|\left|y_{i+1}\right|^{l}\nonumber\\
&+c \sum_{i=n_{k}}^{j} \frac{1}{i}(\left\|x_{i}\right\|^{l+1}-\mathbb{E}\left\| x_{i}\right\| ^{l+1})
+ c \sum_{i=n_{k}}^{j}\frac{1}{i} \mathbb{E}\left\| x_{i}\right\| ^{l+1}\nonumber\\
&+c \sum_{i=n_{k}}^{j} \frac{1}{i}(\left\|x_{i}\right\|-\mathbb{E}\left\| x_{i}\right\| )
+c \sum_{i=n_{k}}^{j} \frac{1}{i} \mathbb{E}\left\|x_{i}\right\|,
\end{align}	
where $c>0$ is a constant which may vary among different lines of inequalities.

Similar to the proof of Lemma \ref{con}, we have that as $k \rightarrow \infty$
\begin{align}
&\sum_{i=n_{k}}^{j} \frac{1}{i}(\left\| x_{i}\right\|\left|y_{i+1}\right|^{l}-\mathbb{E}\left\| x_{i}\right\|\left|y_{i+1}\right|^{l})=o(1),\label{xyl}\\
&\sum_{i=n_{k}}^{j} \frac{1}{i}(\left\|x_{i}\right\|^{l+1}-\mathbb{E}\left\| x_{i}\right\| ^{l+1})=o(1)\label{xl+1},
\end{align}
and
\begin{align}
\sum_{i=n_{k}}^{j} \frac{1}{i}(\left\|x_{i}\right\|-\mathbb{E}\left\| x_{i}\right\| )=o(1)\label{x1}.
\end{align}	

By the definition of $m\left(n_{k}, T\right)$ and for $j \in \{n_{k}, \ldots, m\left(n_{k}, T\right)\}$, we have $\sum_{i=n_{k}}^{j} a_{i} \mathbb{E}\left\| x_{i}\right\|\left|y_{i+1}\right|^{l}=O(T)$,  $\sum_{i=n_{k}}^{j} a_{i} \mathbb{E}\left\| x_{i}\right\|^{l+1}=O(T)$, and $\sum_{i=n_{k}}^{j} a_{i} \mathbb{E}\left\| x_{i}\right\|=O(T)$. Then it follows that
\begin{align}
\left\|\theta_{j+1}-\theta_{n_{k}}\right\|=o(1)+O(T),~~j=n_{k}, \ldots, m(n_{k}, T)
\end{align}
and hence (\ref{cT}) holds.

We now consider the case that $\lim_{k \to \infty} \sigma _k=\infty$. Noticing that $\left\{\theta_{n_{k}}\right\}_{k \geq 1}$  is a convergent subsequence of $\left\{\theta_{k}\right\}_{k \geq 1}$, i.e., $\theta_{n_{k}} \rightarrow \bar{\theta}$ as $k \to \infty$, we have that for some $\alpha>0$ and all $k\ge1$, $\left\|\theta_{n_{k}}\right\|\le\alpha$.	

Since $\sigma_{k} \underset{k\to \infty}{\longrightarrow}\infty$, there exists some integer $K>0$ such that		
\begin{align}
M_{\sigma_{n_{k}}}>2 \alpha,~~\forall~k \ge K.
\end{align}

Noting that the truncation bounds $\{M_{k}=k^{\frac{1}{1+2l}}\}_{k\geq1}$, we have
\begin{align}\label{thetabd}
\left\|\theta_{k+1}\right\| \leq M_{\sigma_{k}} \leq M_{k}=k^{\frac{1}{1+2l}}.
\end{align}

We first consider $\theta_{i+1}, i=n_{k}, \ldots, n_{k}+{\color{black}[n_{k}^{\frac{1}{1+2 l}}]}$. For all $k$ sufficiently large, it holds that  $n_{k}+{\color{black}[n_{k}^{\frac{1}{1+2 l}}]}<m\left(n_{k}, T\right)$ because
\begin{align}
\nonumber &\lim_{k \to \infty } \frac{1}{n_{k} } +\frac{1}{n_{k}+1 }+\cdots +\frac{1}{n_{k}+{\color{black}[n_{k}^{\frac{1}{1+2 l}}]} }\\
\le& \lim_{k \to \infty }\int_{n_{k}-1}^{n_{k}+{\color{black}[n_{k}^{\frac{1}{1+2 l}}]}} \!\!\frac{1}{x} \mathrm{d}x=
\lim_{k \to \infty }  \ln \frac{n_{k}+{\color{black}[n_{k}^{\frac{1}{1+2 l}}]}}{n_{k}-1}=0.\label{nk_l}
\end{align}

Similarly to the analysis of (\ref{ineq}) and noting (\ref{thetabd}), we have
\begin{align}\label{truncinf}
&\|\theta_{n_{k}}+\sum_{i=n_{k}}^{n_{k}+[n_{k}^{\frac{1}{1+2 l}}]} a_{i}x_{i} \varphi(y_{i+1}-\theta_{i}^{T}  x_{i}) \| \nonumber\\
\end{align}
\begin{align}
%&\le \left\|\theta_{n_{k}}\right\| + \sum_{i=n_{k}}^{n_{k}+[n_{k}^{\frac{1}{1+2 l}}]} a_{i}\left\|x_{i}\right\|  \left|\varphi(y_{i+1}-\theta_{i}^{T} x_{i})\right| \nonumber \\
\le& \left\|\theta_{n_{k}}\right\| +c \sum_{i=n_{k}}^{n_{k}+[n_{k}^{\frac{1}{1+2 l}}]} a_{i}\left\| x_{i} \right\|({\color{black}\left| y_{i+1}\right|^{l}+\left\| \theta_{i}\right\|^{l}\left\|x_{i}\right\|^{l}}+1) \nonumber \\
\le& \left\|\theta_{n_{k}}\right\| + c \sum_{i=n_{k}}^{n_{k}+[n_{k}^{\frac{1}{1+2 l}}]} \frac{1}{i}\left\| x_{i}\right\|\left|y_{i+1}\right|^{l}\nonumber\\
&+c \sum_{i=n_{k}}^{n_{k}+[n_{k}^{\frac{1}{1+2 l}}]}\frac{1}{i} \cdot i^{\frac{l}{1+2l}}\left\| x_{i}\right\|^{l+1} + c\sum_{i=n_{k}}^{n_{k}+[n_{k}^{\frac{1}{1+2 l}}]} \frac{1}{i}\left\| x_{i}\right\| \nonumber \\
=& \left\|\theta_{n_{k}}\right\|+\!c\!\sum_{i=n_{k}}^{n_{k}+[n_{k}^{\frac{1}{1+2 l}}]}\!\frac{1}{i} (\left\| x_{i}\right\|\left|y_{i+1}\right|^{l}-\mathbb{E}\left\| x_{i}\right\|\left|y_{i+1}\right|^{l})\nonumber\\
&+
c\!\sum_{i=n_{k}}^{n_{k}+[n_{k}^{\frac{1}{1+2l}}]}\!{\frac{1}{i^{\frac{1+l}{1+2l}}}}(\left\| x_{i}\right\|^{l+1}\!-\mathbb{E}\left\|x_{i}\right\|^{l+1})\nonumber\\
&+c\!\!\!\!\!\sum_{i=n_{k}}^{n_{k}+[n_{k}^{\frac{1}{1+2l}}]}\!\!\frac{1}{i}(\left\| x_{i}\right\|-\mathbb{E}\left\|x_{i}\right\|)+c\!\!\!\!\!\sum_{i=n_{k}}^{n_{k}+{\color{black}[n_{k}^{\frac{1}{1+2 l}}]}}\!\frac{1}{i} \mathbb{E}\left\| x_{i}\right\|\left|y_{i+1}\right|^{l}\nonumber\\ 
&+c\sum_{i=n_{k}}^{n_{k}+[n_{k}^{\frac{1}{1+2 l}}]}{\frac{1}{i^{\frac{1+l}{1+2l}}}} \mathbb{E}\left\|x_{i}\right\|^{l+1}
+c\!\!\!\!\sum_{i=n_{k}}^{n_{k}+[n_{k}^{\frac{1}{1+2 l}}]}\!\!\frac{1}{i} \mathbb{E}\left\| x_{i}\right\|.
\end{align}

By Lemma \ref{Le1}, we have as $k \to \infty$
\begin{align}
&\sum_{i=n_{k}}^{n_{k}+{\color{black}[n_{k}^{\frac{1}{1+2 l}}]}}\frac{1}{i} (\left\| x_{i}\right\|\left|y_{i+1}\right|^{l}-\mathbb{E}\left\| x_{i}\right\|\left|y_{i+1}\right|^{l})=o(1),\\
%\end{align}
%\begin{align}
&\sum_{i=n_{k}}^{n_{k}+{\color{black}[n_{k}^{\frac{1}{1+2 l}}]}}{\frac{1}{i^{\frac{1+l}{1+2l}}}}(\left\| x_{i}\right\|^{l+1}-\mathbb{E}\left\|x_{i}\right\|^{l+1})=o(1),
\end{align}
and
\begin{align}
\sum_{i=n_{k}}^{n_{k}+{\color{black}[n_{k}^{\frac{1}{1+2 l}}]}}\frac{1}{i}(\left\| x_{i}\right\|-\mathbb{E}\left\|x_{i}\right\|)=o(1).
\end{align}

By the time-invariant distribution of $\{x_k\}_{k\geq1}$, we have
\begin{align}
&\sum_{i=n_{k}}^{n_{k}+{\color{black}[n_{k}^{\frac{1}{1+2 l}}]}}
\frac{1}{i^{\frac{1+l}{1+2l}}} \mathbb{E}\left\|x_{i}\right\|^{l+1} \nonumber\\
\le&\mathbb{E}\left\|x_{1}\right\|^{l+1}\int_{n_{k}-1}^{n_{k}+{\color{black}[n_{k}^{\frac{1}{1+2 l}}]}} \frac{1}{x^{\frac{1+l}{1+2l} } } \mathrm{d} x\nonumber\\
%=&{\color{black}\frac{1+2l}{l}}\mathbb{E}\left\|x_{1}\right\|^{l+1}\cdot x^{\frac{l}{1+2l}}\Big|_{n_{k}-1}^{n_{k}+{\color{black}[n_{k}^{\frac{1}{1+2 l}}]}}\nonumber\\
=& {\color{black}\frac{1+2l}{l}}\mathbb{E}\left\|x_{1}\right\|^{l+1}\!\! \left[\left ( n_{k}\!+\!\![n_{k}^{\frac{1}{1+2 l}}] \right ) ^{\frac{l}{1+2l}}\!\!\!\!\!-\!\!
\left ( n_{k}\!-\!1 \right ) ^{\frac{l}{1+2l}}\right] 
\underset{k \rightarrow \infty}{\longrightarrow} 0.
\end{align}

Noticing (\ref{nk_l}), we have as $k \to \infty$
\begin{align}
\sum_{i=n_{k}} ^{n_{k}+[n_{k}^{\frac{1}{1+2 l}}]}\!\! \frac{1}{i} \mathbb{E} \left\| x_{i}\right\|\left|y_{i+1}\right|^{l}=O\Big(\sum_{i=n_{k}} ^{n_{k}+{\color{black}[n_{k}^{\frac{1}{1+2 l}}]}} \frac{1}{i}\Big)=o(1),
\end{align}	
and
\begin{align}\label{traninf1}
\sum_{i=n_{k}} ^{n_{k}+{\color{black}[n_{k}^{\frac{1}{1+2 l}}]}} \frac{1}{i} \mathbb{E} \left\| x_{i}\right\|=o(1).
\end{align}
	
It follows from (\ref{truncinf})--(\ref{traninf1}) that as $k\to\infty$,
\begin{align}\label{upbd}
&\big\|\theta_{n_{k}}+\sum_{i=n_{k}}^{n_{k}+[n_{k}^{\frac{1}{1+2 l}}]} a_{i}x_{i} \varphi\left(y_{i+1}-\theta_{i}^{T}  x_{i}\right) \big\|\nonumber\\
\leqslant &\left\|\theta_{n_{k}}\right\|+o(1) <\frac{5}{4}\alpha.
\end{align}

Thus for all $k$ sufficiently large, there is no truncation for $\theta_{i+1}, i=n_{k}, \ldots, n_{k}+{\color{black}[n_{k}^{\frac{1}{1+2 l}}]}$ and
\begin{align}\label{de}
\left\|\theta_{i+1}\right\|<\frac{5}{4} \alpha, ~\left\|\theta_{i+1}-\theta_{n_{k}}\right\| \le C T,
\end{align}
for some $C>0$.

Assume that (\ref{de}) holds for $\theta_{i+1}, i=n_{k}, \ldots, n_{k}+{\color{black}[n_{k}^{\frac{1}{1+2 l}}]}+j<m\left(n_{k}, T\right)$. Next we prove (\ref{de}) also holds for $\theta _{n_{k}+{\color{black}[n_{k}^{\frac{1}{1+2 l}}]}+j+2}$.

By (\ref{de}), we have
\begin{align}
&\Big\| \theta_{ n_{k}+[n_{k}^{\frac{1}{1+2 l}}]+1}+\sum_{i=n_{k}+[n_{k}^{\frac{1}{1+2 l}}]+1}^{n_{k}+[n_{k}^{\frac{1}{1+2 l}}]+j+1} a_{i} x_{i} \varphi(y_{i+1}-\theta_{i}^{T}x_{i})\Big\| \nonumber\\
\le& \big\|\theta_{ n_{k}+[n_{k}^{\frac{1}{1+2 l}}]+1}\big\| +
c\sum_{i=n_{k}+[n_{k}^{\frac{1}{1+2 l}}]+1}^{n_{k}+[n_{k}^{\frac{1}{1+2 l}}]+j+1} a_{i}\left\|x_{i}\right\|\big(| y_{i+1}|^{l}\nonumber\\
&+\left\| \theta_{i}\right\|^{l}\left\|x_{i}\right\|^{l}+1\big) \nonumber\\
\le& \big\|\theta_{ n_{k}+[n_{k}^{\frac{1}{1+2 l}}]+1}\big\| +
c\sum_{i=n_{k}+[n_{k}^{\frac{1}{1+2 l}}]+1}^{n_{k}+[n_{k}^{\frac{1}{1+2 l}}]+j+1} \frac{1}{i}\left\|x_{i}\right\|\left| y_{i+1}\right|^{l} \nonumber\\
&+c\!\!\!\!\!\sum_{i=n_{k}+[n_{k}^{\frac{1}{1+2 l}}]+1}^{n_{k}+[n_{k}^{\frac{1}{1+2 l}}]+j+1}\frac{1}{i}\|x_{i}\|^{l+1}\cdot\!(\frac{5}{4}\alpha)^l\!
\!+\!c\!\!\!\!\sum_{i=n_{k}+[n_{k}^{\frac{1}{1+2 l}}]+1}^{n_{k}+[n_{k}^{\frac{1}{1+2 l}}]+j+1} \frac{1}{i} \left\|x_{i}\right\| \nonumber\\
=&\big\|\theta_{ n_{k}+[n_{k}^{\frac{1}{1+2 l}}]+1}\big\| +
c\sum_{i=n_{k}+[n_{k}^{\frac{1}{1+2 l}}]+1}^{n_{k}+[n_{k}^{\frac{1}{1+2 l}}]+j+1} \frac{1}{i}\Big(\left\|x_{i}\right\|\left| y_{i+1}\right|^{l}\nonumber\\
&-\mathbb{E}\left\|x_{i}\right\|\left| y_{i+1}\right|^{l} +\mathbb{E}\left\|x_{i}\right\|\left| y_{i+1}\right|^{l}\Big)\nonumber\\
&+\!\!(\frac{5}{4}\alpha)^l\cdot c\!\!\!\!\sum_{i=n_{k}+[n_{k}^{\frac{1}{1+2 l}}]+1}^{n_{k}+[n_{k}^{\frac{1}{1+2 l}}]+j+1}\!\!\!\!\frac{1}{i}\Big(\left\|x_{i}\right\|^{l+1}\!-\!\mathbb{E}\left\|x_{i}\right\|^{l+1}+\!\mathbb{E}\left\|x_{i}\right\|^{l+1}\!\Big)\!\nonumber\\
&+\!c\!\!\!\sum_{i=n_{k}+[n_{k}^{\frac{1}{1+2 l}}]+1}^{n_{k}+[n_{k}^{\frac{1}{1+2 l}}]+j+1} \frac{1}{i} \left(\left\|x_{i}\right\|\!-\!\mathbb{E}\left\|x_{i}\right\|\!+\!\mathbb{E}\left\|x_{i}\right\|\right).\label{85'}
\end{align}

By {\color{black}(\ref{xyl}), (\ref{xl+1}), and (\ref{x1})} and the assumption $n_{k}+[n_{k}^{\frac{1}{1+2 l}}]+j+1<m\left(n_{k}, T\right)$, it yields that as $k\to\infty$,
\begin{align}
\sum_{i=n_{k}+[n_{k}^{\frac{1}{1+2 l}}]+1}^{n_{k}+[n_{k}^{\frac{1}{1+2 l}}]+j+1} &\frac{1}{i}(\left\|x_{i}\right\|\left| y_{i+1}\right|^{l}-\mathbb{E}\left\|x_{i}\right\|\left| y_{i+1}\right|^{l}\nonumber\\
&+\mathbb{E}\left\|x_{i}\right\|\left| y_{i+1}\right|^{l})=o(1)+O(T),\label{86'}\\
\sum_{i=n_{k}+[n_{k}^{\frac{1}{1+2 l}}]+1}^{n_{k}+[n_{k}^{\frac{1}{1+2 l}}]+j+1}&\frac{1}{i}(\left\|x_{i}\right\|^{l+1} - \mathbb{E}\left\|x_{i}\right\|^{l+1}\nonumber\\
&+\mathbb{E}\left\|x_{i}\right\|^{l+1})=o(1)+O(T),\label{87'}
\end{align}
and
\begin{align}
\sum_{i=n_{k}+[n_{k}^{\frac{1}{1+2 l}}]+1}^{n_{k}+[n_{k}^{\frac{1}{1+2 l}}]+j+1} \frac{1}{i} \left(\left\|x_{i}\right\|-\mathbb{E}\left\|x_{i}\right\|+\mathbb{E}\left\|x_{i}\right\|\right)=o(1)+O(T).\label{88'}
\end{align}

Noting (\ref{de})--(\ref{88'}), we have
\begin{align}
&\Big\| \theta_{ n_{k}+[n_{k}^{\frac{1}{1+2 l}}]+1}+\sum_{i=n_{k}+[n_{k}^{\frac{1}{1+2 l}}]+1}^{n_{k}+[n_{k}^{\frac{1}{1+2 l}}]+j+1} a_{i} x_{i} \varphi\left(y_{i+1}-\theta_{i}^{T}x_{i}\right)\Big\|\nonumber\\
\le& \frac{5}{4} \alpha+o(1)+O(T)<2 \alpha,
\end{align}
for all $k$ sufficiently large and $T $ sufficiently small.

Hence, there is no truncation for  $\theta_{n_{k}+[n_{k}^{\frac{1}{1+2 l}}]+j+2}$ and
\begin{align}
\big\|\theta_{n_{k}+[n_{k}^{\frac{1}{1+2 l}}]+j+2}-\theta_{n_{k}}\big\| \le C T
\end{align}
for some $C>0$.

By induction we obtain that (\ref{de}) holds for $i=n_{k}, \ldots, m\left(n_{k}, T\right)$. This finishes the proof.
	
\section{Proof of Lemma \ref{Lem4}}

Denote by $\mathcal{S}^{d}$ a countable dense set in $\mathbb{R}^{d}$. By Lemma \ref{Le1}, there exists a set $\Omega_{0}\subset \Omega$ with  $\mathbb{P}\left\{\Omega_{0}\right\}=1$ such that on the trajectory of any $\omega \in \Omega_{0}$, (\ref{e3})--(\ref{e6}) hold and (\ref{e4}) takes place for all $\theta \in \mathcal{S}$. In the following analysis, we focus on a fixed $\omega\in \Omega_{0}$.

Denote by $\bar{\theta}$ the limit of $\left\{\theta_{n_{k}}\right\}$, i.e., $\theta_{n_{k}} \underset{k \to \infty}{\longrightarrow} \bar{\theta}$  and  {\color{black}$\|\theta_{n_{k}}\|<\alpha, k \geq 1 $} for some $\alpha>0$. Let  $\{\theta(n)\}_{n \geq 1}\subset \mathcal{S}^{d}$ be a sequence satisfying $\theta(n) \underset{n \to \infty}{\longrightarrow} \bar{\theta}$. Without loss of generality we assume $\left\|\theta(n)\right\|<\alpha, n\ge1$.

The noise $\varepsilon_{i+1},i=n_{k},\ldots,m\left(n_{k},T\right)$ can be divided into the following parts:
\begin{align}
\varepsilon_{i+1}&=x_{i}\varphi\!\left(y_{i+1}\!-\!\theta_{i}^{T}x_{i} \right) \!-\!\mathbb{E}\!\left[x_{i} \varphi\left(y_{i+1}\!-\!\theta^{T} x_{i}\right)\!\right]\!\!\Big|_{\theta =\theta _{i}}\\
&=\varepsilon_{i+1}^{(1)}(n)+\varepsilon_{i+1}^{(2)}(n)+\varepsilon_{i+1}^{(3)}(n)+\varepsilon_{i+1}^{(4)},
\end{align}	
where
\begin{align}
&\varepsilon_{i+1}^{(1)}(n)  \!\triangleq\! x_{i} \varphi\left(y_{i+1}\!-\!\theta_{i}^{T} x_{i}\right)\!-\!x_{i} \varphi\left(y_{i+1}\!-\!\theta^{T}(n) x_{i}\right),\\
&\varepsilon_{i+1}^{(2)}(n)  
\triangleq \nonumber\\&x_{i} \varphi\left(y_{i+1}\!-\!\theta^{T}(n) x_{i}\right)\!-\!\mathbb{E}\!\left[x_{i} \varphi\left(y_{i+1}\!-\!\theta^{T} x_{i}\right)\right]\Big|_{\theta = \theta(n)},\\
&\varepsilon_{i+1}^{(3)}(n)  
\triangleq \nonumber\\
&\mathbb{E}\!\left[x_{i} \varphi\left(y_{i+1}\!-\!\theta^{T} x_{i}\right)\!\right]\!\!\Big|_{\theta=\theta(n)}\!\!\!\!\!\!\!\!-\mathbb{E}\!\left[x_{i} \varphi\left(y_{i+1}\!-\! \theta^{T} x_{i}\right)\!\right]\!\!\Big|_{\theta=\bar{\theta}},\\
&\varepsilon_{i+1}^{(4)}\triangleq\nonumber\\&
\mathbb{E}\!\left[x_{i} \varphi\left(y_{i+1}\!-\!\theta^{T} x_{i}\right)\!\right]\!\!\Big|_{\theta=\bar{\theta}}\!\!\!\!-\mathbb{E}\!\left[x_{i} \varphi\left(y_{i+1}\!-\!\theta^{T} x_{i}\right)\!\right]\!\!\Big|_{\theta = \theta_{i}}.
\end{align}

To prove the lemma it suffices to verify (\ref{noise}) with $\varepsilon_{i+1}$ replaced by $\varepsilon_{i+1}^{(j)}(n), j=1, \cdots, 4$. We begin with $\varepsilon_{i+1}^{(1)}(n)$.

For $\varepsilon_{i+1}^{(1)}(n)$, we have
\begin{align}
&\frac{1}{T}\Big\|\sum_{i=n_{k}}^{m\left(n_{k}, T\right)} a_{i} \varepsilon_{i+1}^{(1)}(n)\Big\|  \nonumber \\
%=&\frac{1}{T}\Big\|\sum_{i=n_{k}}^{m\left(n_{k}, T\right)} a_{i}x_{i} \left[\varphi(y_{i+1}-\theta_{i}^{T}x_{i})-\varphi(y_{i+1}-\theta^{T}(n)x_{i})\right]\Big\|\nonumber\\
%\le&  \frac{1}{T}\sum_{i=n_{k}}^{m\left(n_{k}, T\right)} a_{i}\left\|x_{i}\right\| \left| \varphi(y_{i+1}-\theta_{i}^{T}x_{i})-\varphi(y_{i+1}-\theta^{T}(n)x_{i})\right|\nonumber\\
=&\frac{1}{T}\sum_{i=n_{k}}^{m\left(n_{k}, T\right)} a_{i}\left\|x_{i}\right\| \left| \varphi(y_{i+1}-\theta_{i}^{T}x_{i})-\varphi(y_{i+1}-\theta^{T}(n)x_{i})\right|\nonumber\\
&\cdot\left(\mathbb{I}_{\left[\left|\theta_{i}^{T}x_{i}-\theta^{T}(n)x_{i}\right|>1 \right]}+\mathbb{I}_{\left[\left|\theta_{i}^{T}x_{i}-\theta^{T}(n)x_{i}\right| \leq 1\right]}\right).
\end{align}

Note that {\color{black}$\|\theta_{n_{k}}\|<\alpha,~\forall k \ge 1$, } $\left\|\bar{\theta}(n)\right\|<\alpha,~\forall n\ge1$ and by Lemma \ref{Le2}  {\color{black}$\|\theta_{i+1}\|<2 \alpha,~i=n_{k}, \ldots, m\left(n_{k}, T\right)$} for all $k$ sufficiently large. We have the following chain of inequalities:
\begin{align}\label{noise1}
\frac{1}{T}\!\!\!\sum_{i=n_{k}}^{m\left(n_{k}, T\right)}\!\!\! &a_{i}\left\|x_{i}\right\| \left| \varphi\left(y_{i+1}\!-\!\theta_{i}^{T}x_{i}\right)-\varphi\left(y_{i+1}\!-\!\theta^{T}(n)x_{i}\right)\right|\nonumber\\
&\cdot\mathbb{I}_{\left[\left|\theta_{i}^{T}x_{i}-\theta^{T}(n)x_{i}\right|>1 \right]}\nonumber\\
%\le \frac{1}{T}\!\!\!\sum_{i=n_{k}}^{m\left(n_{k}, T\right)} %&\!\!\!a_{i}\left\|x_{i}\right\| \left(\left| \varphi\left(y_{i+1}\!-\!\theta_{i}^{T}x_{i}\right)\right|\!\!+\!\!\left|\varphi\left(y_{i+1}\!-\!\theta^{T}(n)x_{i}\right)\right|\right)\nonumber\\
%&\cdot\left|\theta_{i}^{T}x_{i}-\theta^{T}(n)x_{i}\right| \nonumber\\
\le  \frac{c}{T}\sum_{i=n_{k}}^{m\left(n_{k}, T\right)}&\frac{1}{i}\left\|x_{i}\right\|^{2}\left(\left|y_{i+1}\right|^{l}+\left\|x_{i}\right\|^{l}+1\right)\left\|\theta_{i}-\theta(n)\right\| \nonumber \\
 \le  \frac{c}{T}\sum_{i=n_{k}}^{m\left(n_{k}, T\right)}&\frac{1}{i}(\left\|x_{i}\right\|^{2}\left|y_{i+1}\right|^{l}+\left\|x_{i}\right\|^{l+2} +{\color{black}\|x_i\|^2})\nonumber\\
 &\cdot(\left\|\theta_{i}-\theta_{n_{k}}\right\|+\left\|\theta_{n_{k}}-\bar{\theta}\right\|+\left\|\theta(n)-\bar{\theta}\right\|) \nonumber \\
\le  \frac{c}{T}\sum_{i=n_{k}}^{m\left(n_{k}, T\right)}&\frac{1}{i}(\left\|x_{i}\right\|^{2}\left|y_{i+1}\right|^{l}+\left\|x_{i}\right\|^{l+2} +{\color{black}\|x_i\|^2})\nonumber\\
&\cdot(cT+\left\|\theta_{n_{k}}-\bar{\theta}\right\|+\left\|\theta(n)-\bar{\theta}\right\|),
\end{align}
where $c$ is a positive constant which may vary among different lines of inequalities.

By Lemma \ref{Le1} and the definition of $m\left(n_{k}, T\right)$, we have
\begin{align}\label{x^appr}
&\sum_{i=n_{k}}^{m\left(n_{k}, T\right)}\frac{1}{i}(\left\|x_{i}\right\|^{2}\left|y_{i+1}\right|^{l}+\left\|x_{i}\right\|^{l+2}+{\color{black}\left\|x_{i}\right\|^{2}})\nonumber\\
=&\sum_{i=n_{k}}^{m\left(n_{k}, T\right)}\frac{1}{i}\Big[\left\|x_{i}\right\|^{2}\left|y_{i+1}\right|^{l}-\mathbb{E}\left\|x_{i}\right\|^{2}\left|y_{i+1}\right|^{l} +\left\|x_{i}\right\|^{l+2}\nonumber\\
&~~ -\mathbb{E}\left\|x_{i}\right\|^{l+2} 
+\left\|x_{i}\right\|^{2}-\mathbb{E}\left\|x_{i}\right\|^{2}\nonumber\\
&~~+\mathbb{E}\left\|x_{i}\right\|^{2}\left|y_{i+1}\right|^{l}+\mathbb{E}\left\|x_{i}\right\|^{l+2} +{\color{black}\mathbb{E}\left\|x_{i}\right\|^{2}}\Big]\nonumber\\
=&o(1)+O(T),
\end{align}
as $k\to \infty$ and hence
\begin{align}\label{fenduan}
\lim _{n \rightarrow \infty} &\lim _{T \rightarrow 0} \limsup _{k \rightarrow \infty} \frac{1}{T}\sum_{i=n_{k}}^{m\left(n_{k}, T\right)} a_{i}\left\|x_{i}\right\| | \varphi(y_{i+1}-\theta_{i}^{T}x_{i})\nonumber\\
&-\!\varphi(y_{i+1}\!-\!\theta^{T}(n)x_{i})|\cdot\mathbb{I}_{\left[\left|\theta_{i}^{T}x_{i}\!-\!\theta^{T}(n)x_{i}\right|>1 \right]}=0.
\end{align}

Now we consider the case $|\theta_{i}^{T}x_{i}-\theta^{T}(n)x_{i}|\le1$. We first consider the case if {\color{black}$\left|y_{i+1}-\theta_{i}^{T} x_{i}\right|>a+1$, where $a$ is specified in the assumption A1'),} which leads to {\color{black}$\left|y_{i+1}-\theta^{T}(n) x_{i}\right|>a$.} By noting that by the assumption A1'), on the interval $(-\infty, 0) \cup(0, \infty)$ the function $\varphi(\cdot)$ has a continuous derivative $\varphi^{(1)}(\cdot)$ and by the mean value theorem, there exists $r_{i}\in(0,1)$ such that
\begin{align}\label{meanThe}
\frac{1}{T} &\sum_{i=n_{k}}^{m\left(n_{k}, T\right)} \frac{1}{i} \left\|x_{i}\right\|\left|\varphi(y_{i+1}-\theta_{i}^{T} x_{i})-\varphi(y_{i+1}-\theta^{T}(n) x_{i})\right|\nonumber\\
&\cdot \mathbb{I}_{[|\theta_{i}^{T}x_{i}-\theta^{T}(n)x_{i}|\le1,|y_{i+1}-\theta_{i}^{T} x_{i}|>{\color{black}a+1}]} \nonumber\\
\le \frac{1}{T}\!\! &\sum_{i=n_{k}}^{m\left(n_{k}, T\right)} \!\!\!\frac{1}{i}\left\|x_{i}\right\| \left|\varphi^{(1)}(y_{i+1}-r_{i} \theta_{i}^{T} x_{i}-(1-r_{i}) \theta^{T}(n) x_{i})\right|\nonumber\\
&\cdot\left|\theta_{i}^{T} x_{i}-\theta^{T}(n) x_{i}\right|\cdot\!\! \mathbb{I}_{[|\theta_{i}^{T}x_{i}-\theta^{T}(n)x_{i}|\le1,|y_{i+1}-\theta_{i}^{T} x_{i}|>a+1]} \nonumber\\
\le \frac{c}{T} &\sum_{i=n_{k}}^{m\left(n_{k}, T\right)} \frac{1}{i}\left\|x_{i}\right\|^2(|y_{i+1}|^l+ \left\|x_{i}\right\|^l+1)\nonumber\\
&\cdot\left(\left\|\theta_{i}-\theta_{n_{k}}\right\|+\left\|\theta_{n_{k}}-\bar{\theta}\right\|+ \left\|\theta(n)-\bar{\theta}\right\|\right),
\end{align}
where $c>0$ is a positive constant.

Combining {\color{black}(\ref{cT}), }(\ref{x^appr}) and (\ref{meanThe}), we have
\begin{align}
&\lim _{n \rightarrow \infty} \lim _{T \rightarrow 0} \limsup _{k \rightarrow \infty}\frac{1}{T} \sum_{i=n_{k}}^{m\left(n_{k}, T\right)}\frac{1}{i} \left\|x_{i}\right\|\big|\varphi(y_{i+1}-\theta_{i}^{T} x_{i})\nonumber\\
&\!-\!\varphi(y_{i+1}\!-\!\theta^{T}\!(n) x_{i})\big|\!
\cdot\!(\mathbb{I}_{[|\theta_{i}^{T}x_{i}\!-\!\theta^{T}(n)x_{i}|\le1,|y_{i+1}\!-\!\theta_{i}^{T} x_{i}|>{\color{black}a+\!1}]})\!=\!0.\label{noise1_2}
\end{align}

While if {\color{black}$\left|y_{i+1}-\theta_{i}^{T} x_{i}\right|\le a+1$,} we have {\color{black}$\left|y_{i+1}-\theta^{T}(n) x_{i}\right| \le a+2$.}
Since $\varphi(t)$ is continuous on $\mathbb{R}$ and thus is uniformly continuous on {\color{black}$[-a-2, a+2]$,} we have that for any $\varepsilon>0$, there exists $\delta>0$ such that
\begin{equation}\label{daoshuunic}
\left|\varphi\left(t_1\right)\!-\!\varphi\left(t_2\right)\right|<\varepsilon,~~\forall\left|t_1-t_2\right|<\delta,~~t_1, t_2 \in{\color{black}[-a\!-\!2, a\!+\!2]},
\end{equation}

For the fixed $\varepsilon>0$, if $|\theta_{i}^{T}x_{i}-\theta^{T}(n)x_{i}|<\delta$, then by (\ref{daoshuunic}) we have
\begin{align}\label{unicont}
&\frac{1}{T} \sum_{i=n_{k}}^{m\left(n_{k}, T\right)} \frac{1}{i} \left\|x_{i}\right\||\varphi(y_{i+1}-\theta_{i}^{T} x_{i})-\varphi(y_{i+1}-\theta^{T}(n) x_{i})|\nonumber\\
&\cdot \mathbb{I}_{[|\theta_{i}^{T}x_{i}-\theta^{T}(n)x_{i}|\le1,|y_{i+1}-\theta_{i}^{T} x_{i}|\le {\color{black}a+1},|\theta_{i}^{T}x_{i}-\theta^{T}(n)x_{i}|<\delta]} \nonumber \\
\le& \frac{\varepsilon}{T}  \sum_{i=n_{k}}^{m\left(n_{k}, T\right)} \frac{1}{i} \left\|x_{i}\right\| = \frac{\varepsilon}{T}(o(1)+O(T)).
\end{align}

%It means that if $k$ is sufficiently large, $T$ is sufficiently small, then
%\begin{align}\label{delta<}
%	\frac{1}{T} \sum_{i=n_{k}}^{m\left(n_{k}, T\right)} \frac{1}{i} \left\|x_{i}\right\||\varphi(y_{i+1}-\theta_{i}^{T} x_{i})-\varphi(y_{i+1}-\theta^{T}(n) x_{i})|(\mathbb{I}_{[|\theta_{i}^{T}x_{i}-\theta^{T}(n)x_{i}|\le1,|y_{i+1}-\theta_{i}^{T} x_{i}|\le 3,|\theta_{i}^{T}x_{i}-\theta^{T}(n)x_{i}|<\delta]})<c\varepsilon.
%\end{align}

For the case that $\left|y_{i+1}-{{\color{black}\theta_i}}^{T} x_{i}\right|\le {\color{black}a+1}$ and $|\theta_{i}^{T}x_{i}-\theta^{T}(n)x_{i}|\ge \delta$, it yields
$$
1\le\frac{\left|\theta_{i}^{T} x_{i}-\theta^{T}(n) x_{i}\right|}{\delta}.
$$

Similarly to the derivation of (\ref{noise1}) and by (\ref{x^appr}), we have
\begin{align}
&\frac{1}{T} \sum_{i=n_{k}}^{m\left(n_{k}, T\right)} \frac{1}{i} \left\|x_{i}\right\||\varphi(y_{i+1}-\theta_{i}^{T} x_{i})-\varphi(y_{i+1}-\theta^{T}(n) x_{i})|\nonumber\\
&\cdot\mathbb{I}_{[|\theta_{i}^{T}x_{i}-\theta^{T}(n)x_{i}|{\color{black}\le}1,|y_{i+1}-\theta_{i}^{T} x_{i}|\le {\color{black}a+1},|\theta_{i}^{T}x_{i}-\theta^{T}(n)x_{i}|\ge \delta]} \nonumber\\
\le&	\frac{1}{T} \sum_{i=n_{k}}^{m\left(n_{k}, T\right)} \frac{1}{i} \left\|x_{i}\right\||\varphi(y_{i+1}-\theta_{i}^{T} x_{i})-\varphi(y_{i+1}-\theta^{T}(n) x_{i})|\nonumber\\
&~~~~~~~~~~~\cdot\frac{\left|\theta_{i}^{T} x_{i}-\theta^{T}(n) x_{i}\right|}{\delta} \nonumber\\
\le & \frac{1}{T\delta}\!\!\!\!\! \sum_{i=n_{k}}^{m\left(n_{k}, T\right)} \!\!\frac{1}{i} \left\|x_{i}\right\|(|\varphi(y_{i+1}-\theta_{i}^{T} x_{i})|\!+\!|\varphi(y_{i+1}-\theta^{T}(n) x_{i})|)\nonumber\\
&~~~~~~~~~~~\cdot|\theta_{i}^{T} x_{i}-\theta^{T}(n) x_{i}| \nonumber\\
\le & \frac{c}{T\delta}\sum_{i=n_{k}}^{m\left(n_{k}, T\right)}\frac{1}{i}(\left\|x_{i}\right\|^{2}\left|y_{i+1}\right|^{l}+\left\|x_{i}\right\|^{l+2} +{\color{black}{\left\|x_{i}\right\|^{2}}})\nonumber\\
&(\left\|\theta_{i}-\theta_{n_{k}}\right\| +\left\|\theta_{n_{k}}-\bar{\theta}\right\|+\left\|\theta(n)-\bar{\theta}\right\|) \nonumber \\
\label{epsilon}\le & \frac{c}{T \delta}(o(1)+O(T)) \cdot({\color{black}O(T)}+o(1)+\|\theta(n)-\bar{\theta}\|),
\end{align}
where $c>0$ is a constant.

%Also, we can choose $k$ sufficiently large, $T$ sufficiently small, $n$ sufficiently large such that
%\begin{align}\label{delta>}
% 	\frac{1}{T} \sum_{i=n_{k}}^{m\left(n_{k}, T\right)} \frac{1}{i} \left\|x_{i}\right\||\varphi(y_{i+1}-\theta_{i}^{T} x_{i})-\varphi(y_{i+1}-\theta^{T}(n) x_{i})|(\mathbb{I}_{[|\theta_{i}^{T}x_{i}-\theta^{T}(n)x_{i}|\le1,|y_{i+1}-\theta_{1}^{T} x_{i}|\le 3, |\theta_{i}^{T}x_{i}-\theta^{T}(n)x_{i}|\ge \delta]})<\varepsilon.
%\end{align}

Combining (\ref{fenduan}), (\ref{noise1_2}), (\ref{unicont}), and (\ref{epsilon}), we have

\begin{align}
\lim _{\varepsilon \rightarrow 0}\lim _{n \rightarrow \infty} \lim _{T \rightarrow 0} \limsup _{k \rightarrow \infty} \frac{1}{T}\Big\|\sum_{i=n_{k}}^{m\left(n_{k}, T\right)} a_{i} \varepsilon_{i+1}^{(1)}(n)\Big\|=0.\label{107'}
\end{align}

For $\varepsilon_{i+1}^{(2)}(n)$, by Lemma {\color{black}\ref{Le1}} we know that
\begin{align}
\sum_{k=1}^{\infty} &\frac{1}{k}\Big( x_{k}\varphi(y_{k+1}-\theta^{T}(n)x_{k})\nonumber\\
&-\big[\mathbb{E}x_{k}\varphi(y_{k+1}-\theta^{T}x_{k})\big]\Big|_{\theta = \theta(n)}\Big)<\infty ~~ \forall n\ge1,
\end{align}
from which it yields
\begin{align}
\lim_{n \rightarrow \infty} \lim _{T \rightarrow 0}
\limsup_{k \rightarrow \infty} \frac{1}{T}\Big\|\sum_{i=n_{k}}^{m\left(n_{k}, T\right)} a_{i} \varepsilon_{i+1}^{(2)}(n)\Big\|=0.\label{109'}
\end{align}

For $\varepsilon_{i+1}^{(3)}(n)$, since $\{w_{k}\}_{k\geq1}$ is i.i.d. and $\{x_k\}_{k\ge1}$ has a time-invariant pdf, we have
\begin{align}
&\frac{1}{T} \Big\|\sum_{i=n_{k}}^{m\left(n_{k}, T\right)} \frac{1}{i} \mathbb{E}\left[x_{i} \varphi\left(y_{i+1}-z_{1}^{T} x_{i}\right)\right]\Big|_{z_{1}=\theta(n)}\nonumber\\
&~~-\mathbb{E}\left[x_{i} \varphi\left(y_{i+1}-z_{2}^{T} x_{i}\right)\right]\Big|_{z_{2}=\bar{\theta}} \Big\| \nonumber\\
\le & \frac{1}{T} \sum_{i=n_{k}}^{m\left(n_{k}, T\right)} \frac{1}{i} \mathbb{E} \Big[\left\|x_{i}\right\|\nonumber\\ &\cdot\left|\varphi\left(y_{i+1}-z_{1}^{T} x_{i}\right)-\varphi\left(y_{i+1}-z_{2}^{T} x_{i}\right)\right|\Big|_{z_{1}=\theta(n),z_{2}=\bar{\theta}}\Big]\nonumber\\
\le& \mathbb{E}\! \left\|x_{1}\right\| \!\left|\varphi\left(y_{2}\!-\!z_{1}^{T} x_{1}\right)\!-\!\varphi\left(y_{2}\!-\!z_{2}^{T} x_{1}\right)\right|\!\!\Big|_{z_{1}=\theta(n),z_{2}=\bar{\theta}}.\label{110'}
\end{align}
By the property of $\varphi(\cdot)$ and noting (\ref{81}), then by the Lebesgue dominated convergence theorem it holds that
\begin{align}\label{noise3}
\lim _{n \rightarrow \infty} \lim _{T \rightarrow 0} \limsup _{k \rightarrow \infty} \frac{1}{T}\Big\|\sum_{i=n_{k}}^{m\left(n_{k}, T\right)} a_{i} \varepsilon_{i+1}^{(3)}(n)\Big\|=0.
\end{align}

Carrying out a similar discussion as (\ref{107'}), we can prove that
\begin{align}
\lim _{T \rightarrow 0} \limsup _{k \rightarrow \infty} \frac{1}{T}\Big\|\sum_{i=n_{k}}^{m\left(n_{k}, T\right)} a_{i} \varepsilon_{i+1}^{(4)}\Big\|=0.\label{119'}
\end{align}

Noting (\ref{107'}), (\ref{109'}), (\ref{noise3}), and (\ref{119'}), we can prove (\ref{noise}). This finishes the proof.

\section*{References}

\def\refname{\vadjust{\vspace*{-2.5em}}} %Please don't do this in a real paper.
\bibliographystyle{IEEEtran}
\bibliography{ref}

% Generated by IEEEtran.bst, version: 1.14 (2015/08/26)
\begin{thebibliography}{10}
\providecommand{\url}[1]{#1}
\csname url@samestyle\endcsname
\providecommand{\newblock}{\relax}
\providecommand{\bibinfo}[2]{#2}
\providecommand{\BIBentrySTDinterwordspacing}{\spaceskip=0pt\relax}
\providecommand{\BIBentryALTinterwordstretchfactor}{4}
\providecommand{\BIBentryALTinterwordspacing}{\spaceskip=\fontdimen2\font plus
\BIBentryALTinterwordstretchfactor\fontdimen3\font minus \fontdimen4\font\relax}
\providecommand{\BIBforeignlanguage}[2]{{%
\expandafter\ifx\csname l@#1\endcsname\relax
\typeout{** WARNING: IEEEtran.bst: No hyphenation pattern has been}%
\typeout{** loaded for the language `#1'. Using the pattern for}%
\typeout{** the default language instead.}%
\else
\language=\csname l@#1\endcsname
\fi
#2}}
\providecommand{\BIBdecl}{\relax}
\BIBdecl

\bibitem{aastrom1971system}
K.~J. {\AA}str{\"o}m and P.~Eykhoff, ``System identification a survey,'' \emph{Automatica}, vol.~7, no.~2, pp. 123--162, 1971.

\bibitem{ljung1998system}
L.~Ljung, ``System {I}dentification,'' in \emph{Signal {A}nalysis and {P}rediction}.\hskip 1em plus 0.5em minus 0.4em\relax Springer, 1998, pp. 163--173.

\bibitem{pillonetto2010new}
G.~Pillonetto and G.~De~Nicolao, ``A new kernel-based approach for linear system identification,'' \emph{Automatica}, vol.~46, no.~1, pp. 81--93, 2010.

\bibitem{chen2012estimation}
T.~Chen, H.~Ohlsson, and L.~Ljung, ``On the estimation of transfer functions, regularizations and {G}aussian processes-{R}evisited,'' \emph{Automatica}, vol.~48, no.~8, pp. 1525--1535, 2012.

\bibitem{pillonetto2014kernel}
G.~Pillonetto, F.~Dinuzzo, T.~Chen, G.~De~Nicolao, and L.~Ljung, ``Kernel methods in system identification, machine learning and function estimation: a survey,'' \emph{Automatica}, vol.~50, no.~3, pp. 657--682, 2014.

\bibitem{pillonetto2022regularized}
G.~Pillonetto, T.~Chen, A.~Chiuso, G.~De~Nicolao, and L.~Ljung, \emph{Regularized system identification: Learning dynamic models from data}.\hskip 1em plus 0.5em minus 0.4em\relax Springer Nature, 2022.

\bibitem{chaudhuri2011differentially}
K.~Chaudhuri, C.~Monteleoni, and A.~D. Sarwate, ``Differentially private empirical risk minimization.'' \emph{Journal of Machine Learning Research}, vol.~12, no.~3, 2011.

\bibitem{jain2014near}
P.~Jain and A.~G. Thakurta, ``({N}ear) dimension independent risk bounds for differentially private learning,'' in \emph{International Conference on Machine Learning}.\hskip 1em plus 0.5em minus 0.4em\relax PMLR, 2014, pp. 476--484.

\bibitem{jain2013differentially}
P.~Jain and A.~Thakurta, ``Differentially private learning with kernels,'' in \emph{International Conference on Machine Learning}.\hskip 1em plus 0.5em minus 0.4em\relax PMLR, 2013, pp. 118--126.

\bibitem{huber2004robust}
P.~J. Huber, \emph{Robust Statistics}.\hskip 1em plus 0.5em minus 0.4em\relax John Wiley \& Sons, 2004, vol. 523.

\bibitem{lennart1999system}
L.~Ljung, \emph{System Identification: Theory for the User}.\hskip 1em plus 0.5em minus 0.4em\relax Upper Saddle River, NJ, USA: PTR Prentice Hall, 1999.

\bibitem{ljung1978convergence}
{L. Ljung}, ``Convergence analysis of parametric identification methods,'' \emph{IEEE Transactions on Automatic Control}, vol.~23, no.~5, pp. 770--783, 1978.

\bibitem{ljung1983theory}
L.~Ljung and T.~S{\"o}derstr{\"o}m, \emph{Theory and Practice of Recursive Identification}.\hskip 1em plus 0.5em minus 0.4em\relax MIT press, 1983.

\bibitem{mu2018asymptotic}
B.~Mu, T.~Chen, and L.~Ljung, ``On asymptotic properties of hyperparameter estimators for kernel-based regularization methods,'' \emph{Automatica}, vol.~94, pp. 381--395, 2018.

\bibitem{vau2023recursive}
B.~Vau and T.-B. Airimitoaie, ``Recursive identification with regularization and on-line hyperparameters estimation,'' \emph{arXiv preprint arXiv:2401.00097}, 2023.

\bibitem{vapnik2000nature}
V.~Vapnik, \emph{The Nature of Statistical Learning Theory}, 2nd~ed.\hskip 1em plus 0.5em minus 0.4em\relax New York: Springer-Verlag, 2000.

\bibitem{murata2017doubly}
T.~Murata and T.~Suzuki, ``Doubly accelerated stochastic variance reduced dual averaging method for regularized empirical risk minimization,'' \emph{Advances in Neural Information Processing Systems}, vol.~30, 2017.

\bibitem{schmidt2017minimizing}
M.~Schmidt, N.~Le~Roux, and F.~Bach, ``Minimizing finite sums with the stochastic average gradient,'' \emph{Mathematical Programming}, vol. 162, no.~1, pp. 83--112, 2017.

\bibitem{wang2022comprehensive}
Q.~Wang, Y.~Ma, K.~Zhao, and Y.~Tian, ``A comprehensive survey of loss functions in machine learning,'' \emph{Annals of Data Science}, vol.~9, no.~2, pp. 187--212, 2022.

\bibitem{zhao2010recursive}
W.-X. Zhao, H.-F. Chen, and W.~X. Zheng, ``Recursive identification for nonlinear {ARX} systems based on stochastic approximation algorithm,'' \emph{IEEE Transactions on Automatic Control}, vol.~55, no.~6, pp. 1287--1299, 2010.

\bibitem{zhao2020general}
W.~Zhao, E.~Weyer, and G.~Yin, ``A general framework for nonparametric identification of nonlinear stochastic systems,'' \emph{IEEE Transactions on Automatic Control}, vol.~66, no.~6, pp. 2449--2464, 2020.

\bibitem{chen2014recursive}
H.-F. Chen and W.~X. Zhao, \emph{Recursive Identification and Parameter Estimation}.\hskip 1em plus 0.5em minus 0.4em\relax Boca Raton, FL, USA: CRC Press, 2014.

\bibitem{chen2018kernel}
T.~Chen, ``On kernel design for regularized {LTI} system identification,'' \emph{Automatica}, vol.~90, pp. 109--122, 2018.

\bibitem{chen2011kernel}
T.~Chen, H.~Ohlsson, G.~C. Goodwin, and L.~Ljung, ``Kernel selection in linear system identification part ii: A classical perspective,'' in \emph{2011 50th IEEE conference on Decision and Control and European Control Conference}.\hskip 1em plus 0.5em minus 0.4em\relax IEEE, 2011, pp. 4326--4331.

\bibitem{diakonikolas2019sever}
I.~Diakonikolas, G.~Kamath, D.~Kane, J.~Li, J.~Steinhardt, and A.~Stewart, ``Sever: A robust meta-algorithm for stochastic optimization,'' in \emph{International Conference on Machine Learning}.\hskip 1em plus 0.5em minus 0.4em\relax PMLR, 2019, pp. 1596--1606.

\bibitem{davydov1973mixing}
Y.~A. Davydov, ``Mixing conditions for markov chains,'' \emph{Teoriya Veroyatnostei i ee Primeneniya}, vol.~18, no.~2, pp. 321--338, 1973.

\bibitem{masry1987strong}
E.~Masry and L.~Gy{\"o}rfi, ``Strong consistency and rates for recursive probability density estimators of stationary processes,'' \emph{Journal of Multivariate Analysis}, vol.~22, no.~1, pp. 79--93, 1987.

\bibitem{chow1978probability}
Y.~Chow and H.~Teicher, \emph{Probability Theory}.\hskip 1em plus 0.5em minus 0.4em\relax New York, NY, USA: Springer-Verlag, 1978.

\end{thebibliography}

\begin{IEEEbiography}[{\includegraphics[width=1in,height=1.25in,clip,keepaspectratio]{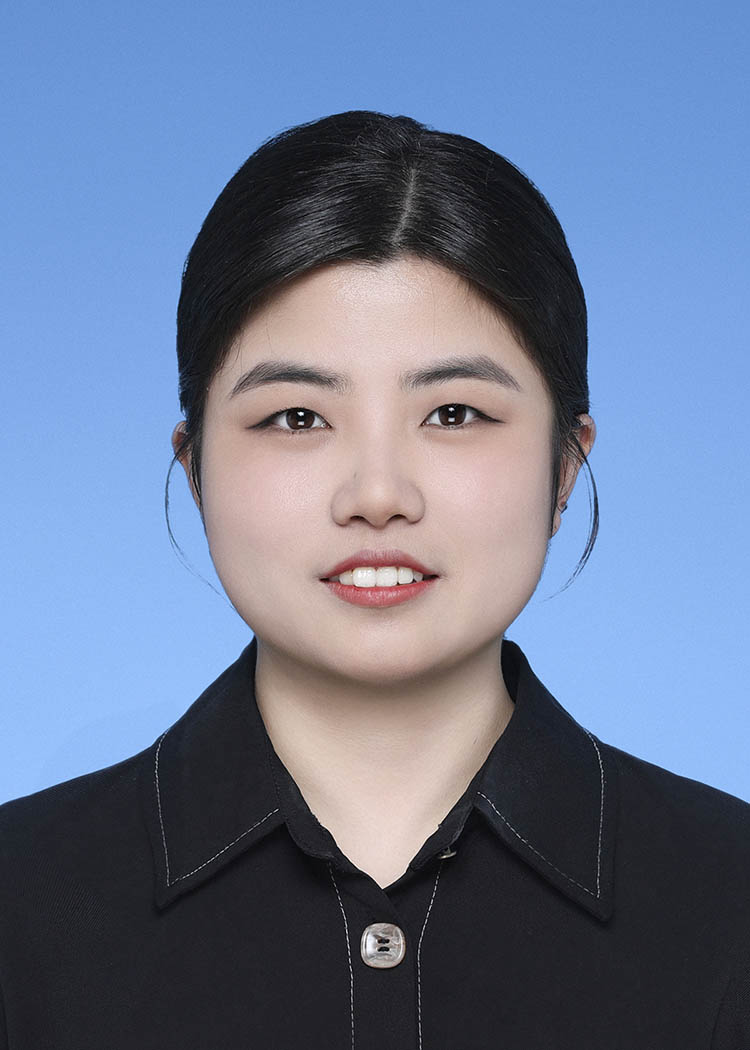}}]
{Mingxia Ding} received her B.S. degree from Dalian University of Technology, China, in 2020. She is currently pursuing her PHD degree at the Institute of Systems Science (ISS), Academy of Mathematics and Systems Science (AMSS), Chinese Academy of Sciences (CAS), China. Her research interests are mainly in system identification, adaptive control and stochastic approximation algorithm.
\end{IEEEbiography}

\begin{IEEEbiography}[{\includegraphics[width=1in,height=1.25in,clip,keepaspectratio]{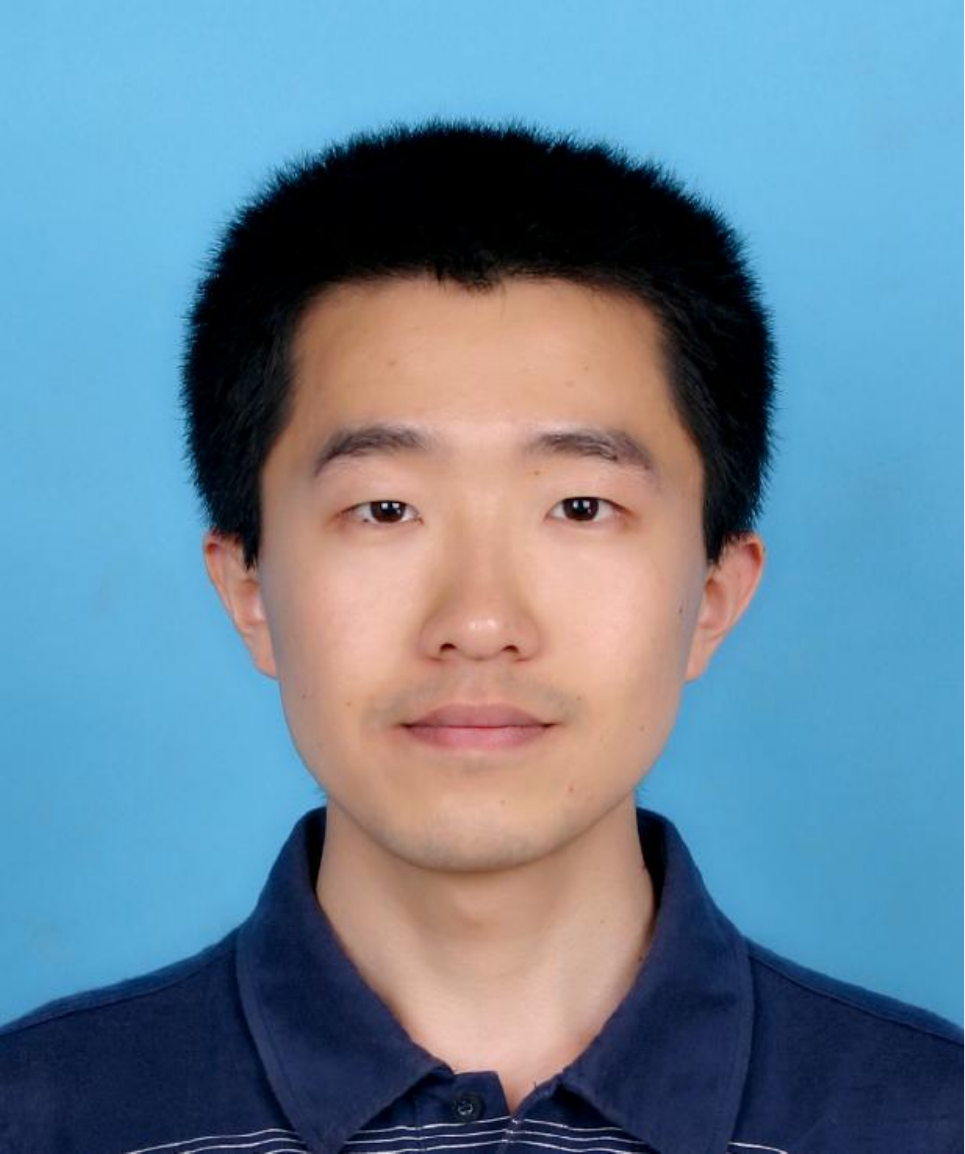}}]
{Wenxiao Zhao} received the Ph.D. degree in operation research and cybernetics from the Institute of Systems Science (ISS), Academy of Mathematics and Systems Science (AMSS), Chinese Academy of Sciences (CAS), China, in 2008. He is currently a Professor with AMSS, CAS. His research interests are mainly in system
identification and adaptive control, variable and feature selection, and distributed stochastic optimization.
\end{IEEEbiography}

\begin{IEEEbiography}[{\includegraphics[width=1in,height=1.25in,clip,keepaspectratio]{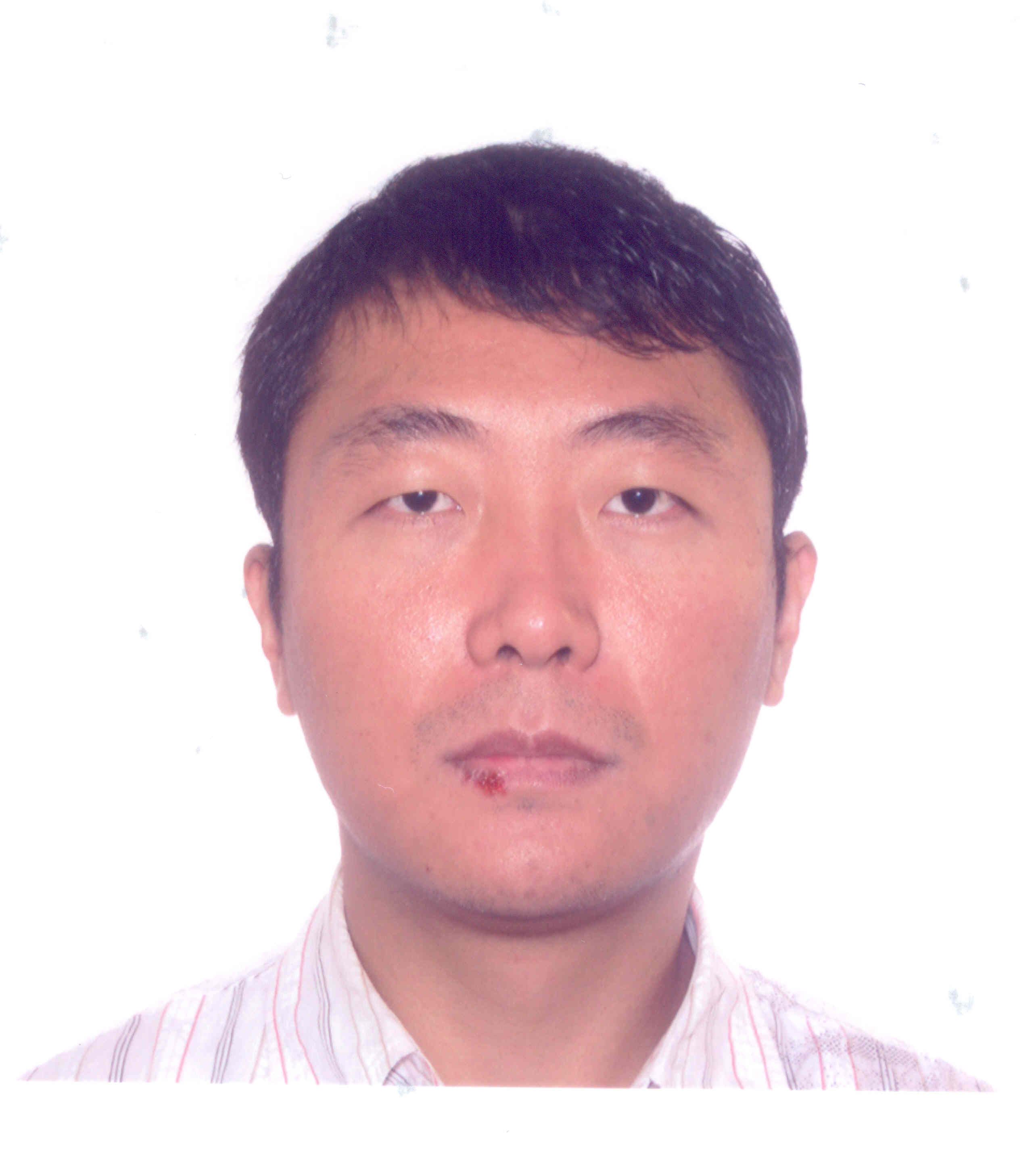}}]
{Tianshi Chen} received the Ph.D. degree in automation and computer-aided engineering from The Chinese University of Hong Kong, Hong Kong, in 2008.  From 2009 to 2015, he was with the Division of Automatic Control, Department of Electrical Engineering, Link\''{o}ping University, Link\''{o}ping, Sweden, first as a Postdoc and then as an Assistant Professor. He is currently a Professor with the Chinese University of Hong Kong, Shenzhen (CUHK-SZ). He has been mainly working in the area of systems and control with focus on system identification, automatic control, and their applications.

Dr. Chen received the Youth Talents Award of the Thousand Talents Plan of China in 2015. He was a plenary speaker at the 19th IFAC Symposium on System Identification, Padova, Italy, 2021, and a coauthor of the book “Regularized System Identification - Learning Dynamic Models from Data.” He is/was an Associate Editor for IEEE TRANSACTIONS ON AUTOMATIC CONTROL (2024–2026), Automatica (2017–2026), System \& Control Letters (2017–2020), and IEEE Control System Society Conference Editorial Board (2016–2019). He received several awards, including the Presidential Research Fellow Award and Presidential Exemplary Teaching Award of CUHK-SZ, in 2020 and 2021, respectively, and the Shenzhen Excellent Teacher Award in 2022.
\end{IEEEbiography}
\end{document}